 \newtheorem{thm}{Theorem}[section]
 \newtheorem{cor}[thm]{Corollary}
 \newtheorem{lem}[thm]{Lemma}
 \newtheorem{prop}[thm]{Proposition}
 \theoremstyle{definition}
 \theoremstyle{remark}
 \newtheorem{rem}[thm]{Remark}
 \numberwithin{equation}{section}
\renewcommand {\phi} {{\varphi}}
\newcommand {\da} {{\delta}}
\newcommand {\Da} {{\Delta}}
\newcommand {\la} {{\lambda}}
\renewcommand {\O} {{\Omega}}
\newcommand {\N} {{\mathbb N}}
\newcommand{\Ba}[1]{\begin{array}{#1}}
\newcommand{\Ea}{\end{array}}
\newcommand{\Be}{\begin{equation}}
\newcommand{\Ee}{\end{equation}}
\newcommand{\Bea}{\begin{eqnarray}}
\newcommand{\Eea}{\end{eqnarray}}
\newcommand{\Beas}{\begin{eqnarray*}}
\newcommand{\Eeas}{\end{eqnarray*}}
\newcommand{\Benu}{\begin{enumerate}}
\newcommand{\Eenu}{\end{enumerate}}
\newcommand{\Bi}{\begin{itemize}}
\newcommand{\Ei}{\end{itemize}}
\newcommand{\BE}{\begin{example} \em}
\newcommand{\EE}{\end{example}}
\newcommand{\bprop} {\begin{proposition}}
\newcommand{\eprop} {\end{proposition}}
\newcommand{\bthm} {\begin{theorem}}
\newcommand{\ethm} {\end{theorem}}
\newcommand{\blem} {\begin{lemma}}
\newcommand{\elem} {\end{lemma}}
\newcommand{\bcor} {\begin{corollary}}
\newcommand{\ecor} {\end{corollary}}
\renewcommand{\O}{\Omega}
\newcommand{{\tBox}}{{\widetilde{\raisebox{-0.2ex}[1.25ex][0ex]{$\Box$}}}}
\begin{document}

%-------------------------------------------------------------------------
% editorial commands: to be inserted by the editorial office
%
%\firstpage{1} \volume{228} \Copyrightyear{2004} \DOI{003-0001}
%
%
%\seriesextra{Just an add-on}
%\seriesextraline{This is the Concrete Title of this Book\br H.E. R and S.T.C. W, Eds.}
%
% for journals:
%
%\firstpage{1}
%\issuenumber{1}
%\Volumeandyear{1 (2004)}
%\Copyrightyear{2004}
%\DOI{003-xxxx-y}
%\Signet
%\commby{inhouse}
%\submitted{March 14, 2003}
%\received{March 16, 2000}
%\revised{June 1, 2000}
%\accepted{July 22, 2000}
%
%
%
%---------------------------------------------------------------------------
%Insert here the title, affiliations and abstract:
%

\title[Schatten class Toeplitz operators on Bergman spaces]{Schatten class Toeplitz operators on weighted Bergman spaces of tube
  domains over symmetric cones}

%----------Author 1
%\author{Cyrille Nana}
%\address{Department of Mathematics, Faculty of Science,\\ University of Buea, P. O. Box 63,\\ Buea %Cameroon}
%\email{nana.cyrille@ubuea.cm}
\author{Beno\^it Florent Sehba}
\address{Department of Mathematics, University of Ghana,\\ P. O. Box LG 62 Legon, Accra, Ghana}
\email{bfsehba@ug.edu.gh}
%\thanks{This work was completed with the support of our
%\TeX-pert.}
%----------Author 2
%\author{A Second Author}
%\address{The address of\br
%the second author\br
%sitting somewhere\br
%in the world}
%\email{dont@know.who.knows}
%----------classification, keywords, date
\subjclass[2010]{Primary 32A10, 32A38; Secondary 47B32, 32M15}

\keywords{Bergman space, Besov space, Ces\`aro-type operator, Toeplitz operator,  Schatten class, symmetric
cone}

%\date{August 27, 2014}
%----------additions
%\dedicatory{To my boss}
%%% ----------------------------------------------------------------------

\begin{abstract}
We prove some characterizations of Schatten class of Toeplitz operators on Bergman spaces of tube domains over symmetric cones for small exponents.
\end{abstract}

%%% ----------------------------------------------------------------------
\maketitle
%%% ----------------------------------------------------------------------
%\tableofcontents
\section{Introduction}
\setcounter{equation}{0} \setcounter{footnote}{0}
\setcounter{figure}{0} All over the text, $\Omega$ will denote an irreducible symmetric  cone in $\mathbb R^n$, and $\mathcal D=\mathbb R^n+i\Omega$
the tube domain over $\Omega$. As in \cite{FK} we
denote by $r$ the rank of the cone $\Omega$ and by $\Delta$
the associated determinant function in $\mathbb R^n$. We recall that for $n\ge 3$, when $r=2$, as example of symmetric cones, we have the Lorentz cone
$\Lambda_n$ which is defined by
$$\Lambda_n=\{(y_1,\cdots,y_n)\in \mathbb R^n: y_1^2-\cdots-y_n^2>0,\,\,\,y_1>0\};$$
its associated determinant function is given by the
Lorentz form
$$\Delta(y)=y_1^2-\cdots-y_n^2.$$  As usual, we denote by $\mathcal{H}(\mathcal D)$ the space
of holomorphic functions on $\mathcal D$.

\vskip .2cm
 Let $1\le p<\infty$. For $\nu \in \mathbb R$ we denote by
$L^p_\nu(\mathcal D)=L^p(\mathcal
D,\Delta^{\nu-\frac{n}{r}}(y)dx\,dy)$
the space of functions $f$ %in $\mathcal D$
satisfying the condition
$$\|f\|_{p,\nu}=||f||_{L^p_\nu(\mathcal D)}:=\left(\int_{\mathcal D}|f(x+iy)|^p\Delta^{\nu-\frac{n}{r}}(y)dxdy\right)^{1/p}<\infty.$$
The weighted Bergman space $A^p_\nu(\mathcal D)$ is the closed subspace of $L^p_\nu(\mathcal D)$ consisting of holomorphic functions in $\mathcal
D$. Following \cite{DD}, this space
is not trivial (i.e. $A^p_\nu(\mathcal D)\neq \{0\}$) only if
$\nu>\frac{n}{r}-1$. The orthogonal projection of the Hilbert space $L^2_\nu(\mathcal D)$ onto its closed subspace $A^2_\nu(\mathcal D)$ is called the weighted Bergman projection and denoted $P_\nu$. We recall that $P_\nu$ is given by $$P_\nu f(z)=\int_{\mathcal D}K_\nu(z, w) f(w)
dV_\nu(w),
$$
with
 $K_\nu(z, w)=
 c_\nu\,\Delta^{-(\nu+\frac{n}{r})}((z-\overline {w})/i)$.
We recall that $K_\nu$ is the reproducing kernel of  $A^2_\nu(\mathcal D)$ (see
\cite{FK}). For simplicity, we used the notation
$dV_\nu(w):=\Delta^{\nu-\frac{n}{r}}(v) du\,dv$, where
$w=u+iv\in \mathcal D$. 

\vskip .2cm
%Let $P_\nu$ be the orthogonal projection mapping $L^2_\nu(\mathcal
%D)$ onto $A^2_\nu(\mathcal D)$.
For $\mu$ a positive Borel measure on $\mathcal D$, the Toeplitz operator $T_\mu$ is the operator defined for functions $f$ with compact support by
\begin{equation}\label{defToeplitz}
T_\mu f(z):=\int_{\mathcal D}K_\nu(z,w)f(w)d\mu (w),
\end{equation}
where $K_\nu$ is the weighted Bergman kernel.

Schatten class $\mathcal{S}_p$ ($0<p\le \infty$) criteria of the Toeplitz operators have been considered by many authors
 on some bounded domains of  $\mathbb {C}^n$ (see \cite{APP,Constantin, Luecking2, Zhu2, Zhu3} and the references therein). For unbounded domains, Schatten classes have been also characterized in Fock spaces by several authors (see for example \cite{Isr,OP} and the references therein).  In \cite{NS}, we extended these results for $1\le p\le \infty$ to weighted Bergman spaces of tube domains over symmetric cones. To be more precise, let us introduce more notations.
\vskip .2cm
For $\delta>0$, we denote by $$B_\delta(z)=\{w\in \mathcal {D}: d(z,w)<\delta\}$$ the Bergman ball centered at $z$ with radius $\delta$, $d$ is the Bergman distance on $\mathcal{D}$.  For $\nu>\frac{n}{r}-1$ and $w\in \mathcal D$,  the normalized reproducing kernel of $A_\nu^2(\mathcal{D})$ at $w$ is given by
\begin{equation}\label{eq:normalrepkern} k_\nu(\cdot,w)=\frac{K_\nu(\cdot,w)}{\|K_\nu(\cdot,w)\|_{2,\nu}}=\Da^{-\nu-\frac nr}\left(\frac{\cdot-\bar w}{i}\right)\Da^{\frac{1}{2}(\nu+\frac nr)}(\Im w).
\end{equation}

Let $\mu$ be a positive measure on $\mathcal D$. The Berezin transform of the measure $\mu$ is the function $\tilde {\mu}$ defined on $\mathcal D$ by
$$\tilde {\mu}(w):=\int_{\mathcal D}|k_\nu(z,w)|^2d\mu(z),\,\,\,w\in \mathcal{D}.$$
The Berezin transform of a function $f$ is defined to be the Berezin transform of the measures $d\mu(z)=f(z)dV_\nu(z)$ (for more on the Berezin transform, see \cite{Zhu1}). For $z\in \mathcal D$ and $\delta\in (0,1)$, we define the average of the positive measure $\mu$ at $z$ by $$\hat {\mu}_\delta(z)=\frac{\mu(B_\delta(z))}{V_\nu(B_\delta(z))}.$$

The following was obtained in \cite{NS}.
\begin{thm}\label{theo:SchattenToep}
Let $\mu$ be a positive Borel measure on $\mathcal D$, and $\nu>\frac{n}{r}-1$. Then for $p\ge 1$, the following assertions are equivalent
\begin{itemize}
\item[(i)] The Toeplitz operator $T_\mu$ belongs
to the Schatten class $\mathcal {S}_p(A_\nu^2(\mathcal D))$.
\item[(ii)] For any $\delta$-lattice ($\delta\in (0,1)$) $\{\zeta_j\}_{j\in \mathbb N}$ in the Bergman metric of $\mathcal D$, the sequence $\{\hat {\mu}_\delta(\zeta_j)\}$ belongs to $l^p$, that is \begin{equation}\label{eq:SchattenToepcond}
\sum_j\left(\frac{\mu(B_j)}{\Da^{\nu+n/r}(\Im \zeta_j)}\right)^p<\infty.
\end{equation}
%where $\{\zeta_j\}_{j\in \N}$ is a $\delta$-lattice in $\mathcal D$ for some $\delta\in (0,1)$, $B_j=B_\delta(\zeta_j)$.
\item[(iii)] For any $\beta\in (0,1)$, the function $z\mapsto \hat {\mu}_\beta(z)$ belongs to $L^p(\mathcal {D},d\lambda)$, with $d\lambda$ the invariant measure on $\mathcal{D}$.
%\end{itemize}
%Moreover, if $p>1+\frac{\frac{n}{r}-1}{\nu}$, then the above assertions are equivalent to the following
%\begin{itemize}
\item[(iv)] $\tilde {\mu}\in L^p(\mathcal {D},d\lambda)$.

\end{itemize}
\end{thm}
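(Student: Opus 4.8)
The plan is to establish the four equivalences by isolating the two endpoint exponents and interpolating. For $p=\infty$, condition (i) reduces to boundedness of $T_\mu$, i.e. the (already known) characterization of Carleson measures for $A^2_\nu(\cD)$, which is equivalent to boundedness of $\hat\mu_\beta$ and of $\tilde\mu$. For $p=1$ one exploits $T_\mu\ge0$: for any orthonormal basis $\{e_k\}$ of $A^2_\nu(\cD)$,
\[
\operatorname{tr}T_\mu=\sum_k\lanv{T_\mu e_k}{e_k}=\sum_k\int_{\cD}|e_k(w)|^2\,d\mu(w)=\int_{\cD}K_\nu(w,w)\,d\mu(w)=c_\nu\int_{\cD}\Da^{-(\nu+\frac nr)}(\Im w)\,d\mu(w),
\]
and the comparisons $\Da^{-(\nu+\frac nr)}(\Im w)\asymp 1/V_\nu(B_\delta(w))$ and $\lambda(B_\delta(\zeta_j))\asymp 1$ then settle all four conditions simultaneously for $p=1$. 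The general case $p\ge1$ will be reached by interpolation, anchored at these two endpoints together with the elementary inequality $\|A\|_{\mathcal S_p}\le\|A\|_{\mathcal S_1}^{1/p}\|A\|_{\mathcal S_\infty}^{1-1/p}$.

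I would handle the three ``measure-theoretic'' conditions (ii), (iii), (iv) first, since they do not mention $T_\mu$. Fix a $\delta$-lattice $\{\zeta_j\}$, so that the balls $B_\delta(\zeta_j)$ cover $\cD$ with bounded overlap and $V_\nu(B_\delta(\zeta_j))\asymp\Da^{\nu+\frac nr}(\Im\zeta_j)$. An average $\hat\mu_\beta$ varies slowly in the Bergman metric (on each $B_\delta(\zeta_j)$ it is comparable to $\hat\mu_{\beta'}(\zeta_j)$ for a neighbouring radius), and $\tilde\mu$, being a Berezin transform, is slowly varying as well, so the $L^p(d\lambda)$-norms of $\hat\mu_\beta$ and of $\tilde\mu$ are comparable to the $\ell^p$-norms of their samples $\{\hat\mu_\beta(\zeta_j)\}$, $\{\tilde\mu(\zeta_j)\}$ along the lattice, and a covering argument makes these independent, up to constants, of the radius and of the lattice; this gives (ii)$\Leftrightarrow$(iii). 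For (iv), the pointwise lower bound $|k_\nu(w,z)|^2\gtrsim 1/V_\nu(B_\delta(z))$ on $B_\delta(z)$ yields $\hat\mu_\delta(z)\lesssim\tilde\mu(z)$, hence (iv)$\Rightarrow$(iii); conversely, breaking the integral defining $\tilde\mu(z)$ over the lattice and invoking the size estimates for $|k_\nu(\cdot,\cdot)|$ from \cite{FK} gives $\tilde\mu(z)\lesssim\sum_j|k_\nu(\zeta_j,z)|^2 V_\nu(B_\delta(\zeta_j))\,\hat\mu_\delta(\zeta_j)$, and a Schur test (the kernel matrix decays in $d(\zeta_j,\zeta_k)$) shows this map is bounded on every $\ell^p$, giving (iii)$\Rightarrow$(iv).

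For (i)$\Rightarrow$(ii) the key observation is that $\{k_\nu(\cdot,\zeta_j)\}$ is a normalized Bessel sequence in $A^2_\nu(\cD)$: by the sub-mean value property, $\sum_j|\lanv{f}{k_\nu(\cdot,\zeta_j)}|^2=c_\nu^{-1}\sum_j|f(\zeta_j)|^2\Da^{\nu+\frac nr}(\Im\zeta_j)\lesssim\sum_j\int_{B_\delta(\zeta_j)}|f|^2\,dV_\nu\lesssim\|f\|_{2,\nu}^2$. For a positive $T\in\mathcal S_p$ with eigenvalues $\{\la_k\}$, eigenvectors $\{f_k\}$, and any normalized Bessel sequence $\{g_j\}$, convexity of $t\mapsto t^p$ in $k$ (applied to the sub-probability weights $|\lanv{g_j}{f_k}|^2$, completed by a zero eigenvalue) followed by the Bessel bound in $j$ gives $\sum_j\lanv{Tg_j}{g_j}^p\le C\|T\|_{\mathcal S_p}^p$; taking $T=T_\mu$, $g_j=k_\nu(\cdot,\zeta_j)$ and using $\lanv{T_\mu k_\nu(\cdot,\zeta_j)}{k_\nu(\cdot,\zeta_j)}=\int_{\cD}|k_\nu(w,\zeta_j)|^2\,d\mu(w)=\tilde\mu(\zeta_j)\ge c\,\hat\mu_\delta(\zeta_j)$ forces $\sum_j\hat\mu_\delta(\zeta_j)^p<\infty$.

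The remaining implication (ii)$\Rightarrow$(i) is where the real work lies. I would fix a small $\delta$, a measurable partition $\cD=\bigsqcup_j R_j$ with $\zeta_j\in R_j\subset B_\delta(\zeta_j)$, and set $\mu_j=\mu|_{R_j}$, $a_j=\hat\mu_\delta(\zeta_j)$, so $T_\mu=\sum_j T_{\mu_j}$. Since $\mu$ is Carleson (by the just-proved (ii)$\Rightarrow$(iii) and the $p=\infty$ case), one may write $T_\mu=U^*U$ with $U\colon A^2_\nu(\cD)\to L^2(d\mu)$ the inclusion and $U_j\colon A^2_\nu(\cD)\to L^2(R_j,d\mu)$ the restriction maps ($U=\sum_j U_j$), whence $\|T_\mu\|_{\mathcal S_p}=\|UU^*\|_{\mathcal S_p}$; relative to $L^2(d\mu)=\bigoplus_j L^2(R_j,d\mu)$ the operator $UU^*$ is ``almost block-diagonal'', its $j$-th diagonal block $U_jU_j^*$ satisfying $\|U_jU_j^*\|_{\mathcal S_p}=\|T_{\mu_j}\|_{\mathcal S_p}\lesssim\|T_{\mu_j}\|_{\mathcal S_1}^{1/p}\|T_{\mu_j}\|_{\mathcal S_\infty}^{1-1/p}\lesssim a_j$ (from $\operatorname{tr}T_{\mu_j}\lesssim a_j$ and the Carleson bound $\|T_{\mu_j}\|_{\mathcal S_\infty}\lesssim a_j$ on a single small ball), while the off-diagonal blocks $U_kU_j^*$ have operator norms decaying in $d(\zeta_j,\zeta_k)$. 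Splitting $\{\zeta_j\}$ into finitely many sub-lattices along which the points are arbitrarily well separated reduces each sub-sum essentially to an orthogonal direct sum whose $p$-th power $\mathcal S_p$-norm is $\asymp\sum_j\|U_jU_j^*\|_{\mathcal S_p}^p\lesssim\sum_j a_j^p<\infty$ by (ii); summing the finitely many sub-lattices gives $T_\mu\in\mathcal S_p$. I expect the genuinely delicate step to be exactly this almost-orthogonal summation in $\mathcal S_p$ for $p>1$: the triangle inequality alone yields only $\ell^1$, not $\ell^p$, control, so turning it into a proof requires the off-diagonal Bergman-kernel estimates of \cite{FK} and a quantitative choice of the separation parameter.
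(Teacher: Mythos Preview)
First, note that this theorem is not proved in the present paper: it is quoted from \cite{NS}, and the paper's own work is the extension to $0<p<1$. Nevertheless the paper does contain, in Proposition~\ref{prop:boxberezin} and Lemma~\ref{lem:suffbergrand}, exactly the two lemmas that close the loop for $p\ge1$, and it is worth comparing your route with that one.

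Your treatment of (ii)$\Leftrightarrow$(iii)$\Leftrightarrow$(iv) and of (i)$\Rightarrow$(ii) is correct and standard; the Schur test you invoke for (iii)$\Rightarrow$(iv) works because the discrete kernel $M_{kj}=|k_\nu(\zeta_j,\zeta_k)|^2V_\nu(B_j)$ has uniformly bounded row and column sums (both reduce to $\int_{\cD}|k_\nu(w,\zeta_k)|^2\,dV_\nu(w)=1$).

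The genuine gap is (ii)$\Rightarrow$(i). You correctly note that the triangle inequality in $\mathcal S_p$ only gives $\ell^1$ control of $\sum_j\|T_{\mu_j}\|_{\mathcal S_p}$, and your proposed remedy --- separate the lattice so that the off-diagonal blocks $U_kU_j^*$ are small and treat $UU^*$ as essentially block-diagonal --- is not a standard tool in $\mathcal S_p$ for $p>1$. There is no Cotlar--Stein mechanism for Schatten norms, and bounding $\|R\|_{\mathcal S_p}$ by a small multiple of $(\sum_j a_j^p)^{1/p}$ from pointwise off-diagonal kernel decay would itself require the very $\ell^p$ summation you are trying to establish; as written the argument is circular, not merely incomplete.

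The paper sidesteps this entirely by going through (iv) rather than (ii). Proposition~\ref{prop:boxberezin}(i) with $m=0$ gives (i)$\Rightarrow$(iv) for $p\ge1$, and Lemma~\ref{lem:suffbergrand} with $m=0$ gives (iv)$\Rightarrow$(i): for any orthonormal sequence $\{e_k\}$, the submean-value property and Lemma~\ref{kor} yield
\[
\lanv{T_\mu e_k}{e_k}=\int_{\cD}|e_k|^2\,d\mu\ \lesssim\ \int_{\cD}|e_k(w)|^2\,\tilde\mu(w)\,dV_\nu(w),
\]
after which H\"older against the probability measure $|e_k|^2\,dV_\nu$ together with $\sum_k|e_k(w)|^2\le K_\nu(w,w)$ gives $\sum_k\lanv{T_\mu e_k}{e_k}^p\lesssim\int_{\cD}\tilde\mu^p\,d\lambda$, which is the $\mathcal S_p$ criterion for positive operators when $p\ge1$. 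No block decomposition, no separation parameter, no almost-orthogonality is needed.
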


Our first concern in this note is for the extension of the above result to the range $0<p<1$.  We prove that the equivalences (i)$\Leftrightarrow$(ii)$\Leftrightarrow$(ii) still hold for $\frac{\frac nr-1}{\nu+\frac nr}<p<1$. This cut-off is due to integrability conditions of the determinant function. The equivalence with the last assertion in the above result still also holds if we restrict to $\frac{2\frac nr-1}{\nu+\frac nr}<p<1$. The last cut-off point is also due to integrability conditions of the determinant function and one can prove that it is sharp. Our result is then as follows.
\begin{thm}\label{theo:main1}
Let $\mu$ be a positive Borel measure on $\mathcal D$, and $\nu>\frac{n}{r}-1$. Then for $\frac{\frac nr-1}{\nu+\frac nr}<p<1$, the following assertions are equivalent
\begin{itemize}
\item[(i)] The Toeplitz operator $T_\mu$ belongs
to the Schatten class $\mathcal {S}_p(A_\nu^2(\mathcal D))$.
\item[(ii)] For any $\delta$-lattice ($\delta\in (0,1)$) $\{\zeta_j\}_{j\in \mathbb N}$ in the Bergman metric of $\mathcal D$, the sequence $\{\hat {\mu}_\delta(\zeta_j)\}$ belongs to $l^p$, that is \begin{equation}\label{eq:mains}
\sum_j\left(\frac{\mu(B_j)}{\Da^{\nu+n/r}(\Im \zeta_j)}\right)^p<\infty.
\end{equation}
%where $\{\zeta_j\}_{j\in \N}$ is a $\delta$-lattice in $\mathcal D$ for some $\delta\in (0,1)$, $B_j=B_\delta(\zeta_j)$.
\item[(iii)] For any $\beta\in (0,1)$, the function $z\mapsto \hat {\mu}_\beta(z)$ belongs to $L^p(\mathcal {D},d\lambda)$.
\end{itemize}
Moreover, if $p>\frac{2\frac{n}{r}-1}{\nu+\frac nr}$, then the above assertions are equivalent to the following
\begin{itemize}

\item[(iv)] $\tilde {\mu}\in L^p(\mathcal {D},d\lambda)$.

\end{itemize}
\end{thm}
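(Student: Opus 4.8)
The plan is to run the cyclic chain $(i)\Rightarrow(ii)\Rightarrow(iii)\Rightarrow(i)$ and then, under the extra restriction on $p$, the equivalence $(iii)\Leftrightarrow(iv)$, following the architecture of the proof of Theorem~\ref{theo:SchattenToep} in \cite{NS} but with each appearance of the triangle inequality for $\|\cdot\|_{\mathcal S_1}$ replaced by the quasi-norm inequality $\|\sum_k A_k\|_{\mathcal S_p}^p\le\sum_k\|A_k\|_{\mathcal S_p}^p$ together with the monotonicity $0\le A\le B\Rightarrow\|A\|_{\mathcal S_p}\le\|B\|_{\mathcal S_p}$ of the $\mathcal S_p$ quasi-norm on positive operators. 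Fix a small $\delta$, a $\delta$-lattice $\{\zeta_j\}$ and a Borel partition $\mathcal D=\bigsqcup_j B_j$ with $\zeta_j\in B_j\subset B_\delta(\zeta_j)$; put $\mu_j=\mu|_{B_j}$, so $\mu=\sum_j\mu_j$, $T_\mu=\sum_j T_{\mu_j}$, and $T_{\mu_j}=E_j^*E_j$ with $E_j\colon A^2_\nu(\mathcal D)\to L^2(\mu_j)$ the inclusion. I shall use throughout that $V_\nu(B_\delta(z))\approx\Delta^{\nu+\frac nr}(\Im z)\approx K_\nu(z,z)^{-1}$ and $\lambda(B_\delta(z))\approx1$ (constants depending on $\delta$), together with the two elementary bounds $\langle T_{\mu_j}k_\nu(\cdot,\zeta_j),k_\nu(\cdot,\zeta_j)\rangle=\int_{B_j}|k_\nu(w,\zeta_j)|^2\,d\mu(w)\gtrsim\hat\mu_\delta(\zeta_j)$ and $\int_{B_j}|f|^2\,d\mu\le\mu(B_j)\sup_{B_j}|f|^2\lesssim\hat\mu_\delta(\zeta_j)\|f\|_{2,\nu}^2$, which give $\hat\mu_\delta(\zeta_j)\approx\|T_{\mu_j}\|_{\mathcal S_\infty}$.

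The equivalence $(ii)\Leftrightarrow(iii)$ is a discretization indifferent to $p$: finite overlap of the $B_\beta(\zeta_j)$, subadditivity $(a+b)^p\le a^p+b^p$, and the comparison of $\hat\mu_\beta$ on $B_j$ with $\hat\mu_\delta(\zeta_j)$ give $\int_{\mathcal D}\hat\mu_\beta(z)^p\,d\lambda(z)\approx\sum_j\hat\mu_\delta(\zeta_j)^p$, just as in \cite{NS}. For $(i)\Rightarrow(ii)$ one may no longer invoke the contractivity of the diagonal projection on $\mathcal S_p$ (false for $p<1$) and must argue variationally. Colour $\{\zeta_j\}$ into $N=N(\delta)$ classes, each $M$-separated in the Bergman metric ($M$ large, fixed below), and rearrange $\{\hat\mu_\delta(\zeta_j)\}$ decreasingly as $c_1\ge c_2\ge\cdots$. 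Among the $Nn$ largest there are at least $n$ lying in one colour class; the normalized kernels at those $n$ centres form a Riesz sequence once $M$ is large (the standard separated-sequence estimate in $A^2_\nu(\mathcal D)$), and on the $n$-dimensional space $V$ they span one proves $\langle T_\mu f,f\rangle\gtrsim c_{Nn}\|f\|_{2,\nu}^2$ by isolating the diagonal contributions $|a_i|^2\langle T_{\mu_{j_i}}k_\nu(\cdot,\zeta_{j_i}),k_\nu(\cdot,\zeta_{j_i})\rangle$ and swallowing the cross terms into the large separation. By the min--max formula for the singular numbers of the positive operator $T_\mu$ this gives $s_n(T_\mu)\gtrsim c_{Nn}$, hence $\|T_\mu\|_{\mathcal S_p}^p=\sum_n s_n(T_\mu)^p\gtrsim\sum_n c_{Nn}^p\gtrsim N^{-1}\sum_m c_m^p=N^{-1}\sum_j\hat\mu_\delta(\zeta_j)^p$.

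The implication $(iii)\Rightarrow(i)$ — equivalently $(ii)\Rightarrow(i)$ — is the crux, and the step I expect to be the main obstacle. From $T_\mu=\sum_j T_{\mu_j}$ one has $\|T_\mu\|_{\mathcal S_p}^p\le\sum_j\|T_{\mu_j}\|_{\mathcal S_p}^p$, so everything reduces to the uniform estimate
\[
\|T_{\mu_j}\|_{\mathcal S_p}\le C(\delta,\nu,p)\,\hat\mu_\delta(\zeta_j).
\]
For $p<1$ this is not a consequence of $\|T_{\mu_j}\|_{\mathcal S_1}=\operatorname{Tr}(T_{\mu_j})\approx\hat\mu_\delta(\zeta_j)$ and $\|T_{\mu_j}\|_{\mathcal S_\infty}\approx\hat\mu_\delta(\zeta_j)$, since the $\mathcal S_p$ quasi-norm of a positive operator also sees the rate of decay of its eigenvalues; one must therefore prove that $T_{\mu_j}$ is a small perturbation of the rank-one operator $\mu(B_j)K_\nu(\zeta_j,\zeta_j)\,k_\nu(\cdot,\zeta_j)\otimes k_\nu(\cdot,\zeta_j)$. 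Transporting $B_\delta(\zeta_j)$ to a fixed ball by an automorphism of $\mathcal D$ (acting unitarily on $A^2_\nu$ up to a unimodular cocycle), $T_{\mu_j}$ becomes comparable to $\hat\mu_\delta(\zeta_j)$ times a Toeplitz operator of a measure carried by that fixed ball; its $k$-th eigenvalue is controlled, via Courant--Fischer, by the order of vanishing at the centre of the functions orthogonal to the first $k-1$ eigenvectors, hence decays geometrically in that order, and since the monomials of each degree are finitely many the series $\sum_k s_k(T_{\mu_j})^p$ converges and is $\lesssim\hat\mu_\delta(\zeta_j)^p$ once $\delta$ is small. Keeping track of this remainder in the $\mathcal S_p$ quasi-norm uses the classical estimates for integrals of powers of the determinant function over $\Omega$ and over $\mathcal D$, whose integrability thresholds are responsible for the lower cut-off $p>\tfrac{n/r-1}{\nu+n/r}$.

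Finally, for $p>\tfrac{2n/r-1}{\nu+n/r}$ one adds $(iii)\Leftrightarrow(iv)$. The implication $(iv)\Rightarrow(iii)$ is unconditional: $|k_\nu(z,w)|^2\gtrsim K_\nu(z,z)\approx V_\nu(B_\beta(z))^{-1}$ for $w\in B_\beta(z)$ gives $\hat\mu_\beta(z)\lesssim\tilde\mu(z)$ pointwise, so $\|\hat\mu_\beta\|_{L^p(d\lambda)}\lesssim\|\tilde\mu\|_{L^p(d\lambda)}$. For $(iii)\Rightarrow(iv)$, a covering argument yields $\tilde\mu(z)\approx\int_{\mathcal D}\hat\mu_\beta(w)|k_\nu(z,w)|^2\,dV_\nu(w)$, an integral operator with positive kernel $|k_\nu(z,w)|^2$; combining the identity $\|\tilde\mu\|_{L^p(d\lambda)}^p=\int_{\mathcal D}\tilde\mu(z)^{p-1}\tilde\mu(z)\,d\lambda(z)$, valid for $0<p<1$, with the pointwise lower bound $\tilde\mu\gtrsim\hat\mu_\beta$ just obtained (used to absorb the factor $\tilde\mu^{\,p-1}$), the estimate reduces to a Forelli--Rudin-type bound for $z\mapsto\int_{\mathcal D}|k_\nu(z,w)|^{2p}\,dV_\nu(w)$ and its governing $\Delta$-integral, which converges exactly when $p(\nu+\tfrac nr)>2\tfrac nr-1$; a measure carried by a sequence tending to $\partial\mathcal D$ shows that the threshold cannot be lowered. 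Apart from the spectral analysis of the localized operators $T_{\mu_j}$ in the third step, every other piece is the $p\ge1$ reasoning of \cite{NS} recast with the $l^p$-type quasi-norm inequalities in place of the triangle inequality.
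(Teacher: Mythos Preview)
Your scheme diverges from the paper in both hard implications, and the $(ii)\Rightarrow(i)$ step contains a genuine gap.

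For $(ii)\Rightarrow(i)$ you try to prove the uniform bound $\|T_{\mu_j}\|_{\mathcal S_p}\lesssim\hat\mu_\delta(\zeta_j)$ by arguing that, after transport to a fixed ball, the eigenvalues of the localized Toeplitz operator decay geometrically because functions orthogonal to the first eigenvectors ``vanish to high order at the centre'' and ``monomials of each degree are finitely many''. This is a unit-ball heuristic: in $\mathcal D$ there is no Taylor grading by total degree with the required pointwise decay, and the eigenvectors of $T_{\mu_j}$ need not be related to any such grading. Even granting some polynomial filtration via the Jordan-algebraic structure, turning it into a uniform estimate $s_k(T_{\mu_j})\le C\rho^{k}\hat\mu_\delta(\zeta_j)$ valid for \emph{every} positive measure supported on the ball is a substantial statement you have not proved. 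There is also an internal inconsistency: if the decay were truly geometric the bound would hold for all $p>0$, yet you claim the cut-off $p>\tfrac{n/r-1}{\nu+n/r}$ enters precisely here. The paper avoids this entirely: it fixes an orthonormal basis $\{e_k\}$, builds the bounded surjective operator $S:e_k\mapsto K_\sigma(\cdot,\zeta_k)\Delta^{\sigma+n/r-\frac12(\nu+n/r)}(\Im\zeta_k)$ from the atomic decomposition (Theorem~\ref{theo:atomdecompo}), and uses Lemma~\ref{lem:schattenviaorthoper} together with the elementary criterion of Lemma~\ref{lem:schattensmallexpo} (for positive $T$ and $0<p<1$, $\sum_k\langle Te_k,e_k\rangle^p<\infty$ forces $T\in\mathcal S_p$). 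The cut-off then appears transparently in the estimate $\sum_k|K_\sigma(\zeta_j,\zeta_k)|^{2p}\Delta^{2p(\sigma+n/r-\frac12(\nu+n/r))}(\Im\zeta_k)\lesssim\Delta^{-p(\nu+n/r)}(\Im\zeta_j)$ via Lemma~\ref{lem:Apfunction}.

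For $(i)\Rightarrow(ii)$ your min--max/colouring argument is in the spirit of Luecking, but the step ``swallow the cross terms into the large separation'' hides a real difficulty: on $B_{j_i}$ the error $\sum_{l\ne i}a_l k_\nu(\cdot,\zeta_{j_l})$ is controlled by $\|a\|_{\ell^2}$ times something small, not by $|a_i|$ times something small, so after summing over $i$ you pick up $\sum_i\hat\mu_\delta(\zeta_{j_i})$, which can dwarf $n\,c_{Nn}$. The paper's route is again through the conjugated operator $T=S^*T_\omega S$ (with $\omega$ the restriction of $\mu$ to a well-separated family of balls): split $T=D+R$ into diagonal and off-diagonal parts relative to $\{e_k\}$, bound $\|D\|_{\mathcal S_p}^p\ge c_1\sum_j\hat\mu_\delta(\zeta_j)^p$ directly, and bound $\|R\|_{\mathcal S_p}^p$ via Lemma~\ref{lem:schattsufflower} and the B\'ekoll\'e--Temgoua estimate (Lemma~\ref{lem:sumdeltafunctestim}), which makes the off-diagonal contribution $\le c_2(2\varepsilon+\varepsilon^2)\sum_j\hat\mu_\delta(\zeta_j)^p$ with $\varepsilon$ as small as one likes. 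This lemma is the technical heart of the proof and replaces the boundedness arguments used in the ball.

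Finally, your $(iii)\Rightarrow(iv)$ manoeuvre with $\tilde\mu^{p}=\tilde\mu^{p-1}\tilde\mu$ and the pointwise lower bound is unnecessarily delicate; since $0<p\le1$ the paper simply uses $\bigl(\sum_k a_k\bigr)^p\le\sum_k a_k^p$ on the covering $\tilde\mu(z)\lesssim\sum_k|K_\nu(z,\zeta_k)|^2\Delta^{\nu+n/r}(\Im z)\Delta^{\nu+n/r}(\Im\zeta_k)\hat\mu_\delta(\zeta_k)$ and integrates termwise, the integrability condition $p(\nu+\tfrac nr)>2\tfrac nr-1$ coming straight from Lemma~\ref{lem:Apfunction}.
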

The main difficulty in the proof of the above theorem is the implication (i)$\Rightarrow$(ii). The idea in \cite{Zhu3} is to replace the measure $\mu$ by a measure supported on a disjoint union of Bergman balls, then split the associated Toeplitz operator into its diagonal and off-diagonal part. It is not hard to prove that the Schatten norm of the diagonal part dominates the $l^p$-norm of the sequence $\{\hat {\mu}_\delta(\zeta_j)\}$. The difficulty is to prove that the latter norm dominates (up to a pretty small constant) the Schatten norm of the off-diganal operator. A part of the techniques in \cite{Zhu3} uses the fact that the unit ball is bounded, and so it cannot be used in our setting. We overcome this difficulty by using a technical lemma originally due to D. B\'ekoll\'e and A. Temgoua \cite{BT}. Considered even in the unit ball, our contribution heavily simplifies the proof of K. Zhu in \cite{Zhu3}.
\vskip .2cm 
We are also interested here in some other possible equivalent characterizations of Schatten class Toeplitz operators. For this, 
%we recall that the Box operator $\Box=\Delta(\frac{1}{i}
%\frac{\partial}{\partial x})$ is the differential operator of
%degree $r$ in $\mathbb R^n$ defined by the equality: \Be
%\Dt{\Ts\left(\frac 1i \frac{\partial}{\partial x}\right)}
%\Box\,[e^{i(x|\xi)}]=\Delta(\xi)e^{i(x|\xi)}, \quad
%x\in \mathbb R^n,\,\xi\in\O. \label{bbox} \Ee
we denote by $\Box_z$ the natural extension to $\mathbb{C}^n=\mathbb{R}^n+i\mathbb{R}^n$ of the wave operator $\Box_x$ on the cone: $$\Box_z=\Delta(\frac{1}{i}\frac{\partial}{\partial z})$$
which is the differential operator of
degree $r$ defined by the equality: 
%$$\Delta(\frac{1}{i}\frac{\partial}{\partial z})[e^{i(z|\xi)]=$$
\Be
%\Dt{\Ts\left(\frac 1i \frac{\partial}{\partial x}\right)}
\Box_z\,[e^{i(z|\xi)}]=\Delta(\xi)e^{i(z|\xi)}, \quad
z\in \mathbb C^n,\,\xi\in\mathbb R^n. \label{bbox} \Ee
We recall (see \cite{BBPR}) that $\Box_z$ acts on the Bergman kernel as follows $$\Box_zK_\nu(z,w)=C_\nu K_{\nu+1}(z,w).$$
Let $m$ be a positive integer. For simplicity, we use the following notation for higher order derivatives of the Bergman kernel,
$$K_z^{\nu,m}(w):=\Box_z^mK_\nu(z,w)$$
and $$k_z^{\nu,m}(\cdot):=\frac{K_z^{\nu,m}(z,\cdot)}{\|K_z^{\nu,m}(z,\cdot)\|_{2,\nu}}.$$
\vskip .2cm
Define the quantity
$$\tilde{\mu}^m(z):=\langle T_\mu k_z^{\nu,m},k_z^{\nu,m}\rangle_\nu=\int_{\mathcal{D}}|k_z^{\nu,m}(w)|^2d\mu(w).$$
We also have the following equivalent characterization.
\begin{thm}\label{thm:main2} Let $\nu>\frac nr-1$, and  $\frac{\frac{n}{r}-1}{\nu+\frac nr}\le p<\infty$. Assume $\mu$ is a positive measure on $\mathcal{D}$. Then the Toeplitz operator $T_\mu$ belongs to the Schatten class $\mathcal{S}_p(A_\nu^2(\mathcal{D}))$ if and only if for each (or some) integer $m\ge 0$ with $p(\nu+\frac nr+2m)>2\frac nr-1$, $\tilde{\mu}^m\in L^p(\mathcal{D}, d\lambda)$ where $d\lambda$ is the invariant measure on $\mathcal{D}$. 
\end{thm}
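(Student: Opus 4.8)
The plan is to reduce Theorem~\ref{thm:main2} to the already-established Theorem~\ref{theo:SchattenToep} (for $p\ge 1$) and Theorem~\ref{theo:main1} (for $\frac{n/r-1}{\nu+n/r}<p<1$), by observing that the higher-order Berezin-type quantity $\tilde\mu^m$ is comparable, up to constants, to an \emph{ordinary} Berezin transform associated to a \emph{shifted weight}. The key computation is to identify $k_z^{\nu,m}$ explicitly. Since $\Box_z K_\nu(z,w)=C_\nu K_{\nu+1}(z,w)$, an $m$-fold application gives $K_z^{\nu,m}(w)=\Box_z^m K_\nu(z,w)=c_{\nu,m}K_{\nu+m}(z,w)$ for a nonzero constant $c_{\nu,m}$. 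Therefore
\begin{equation*}
k_z^{\nu,m}(w)=\frac{K_{\nu+m}(z,w)}{\|K_{\nu+m}(z,\cdot)\|_{2,\nu}},
\end{equation*}
and using $\|K_{\nu+m}(z,\cdot)\|_{2,\nu}^2 = \int_{\mathcal D}|K_{\nu+m}(z,w)|^2\,dV_\nu(w)$, which by the standard integral estimates for powers of the determinant function (see \cite{FK,BBPR}) is comparable to $\Delta^{-(2(\nu+m)-\nu+\frac nr)}(\Im z)=\Delta^{-(\nu+2m+\frac nr)}(\Im z)$, one gets
\begin{equation*}
|k_z^{\nu,m}(w)|^2 \asymp \frac{\Delta^{\nu+2m+\frac nr}(\Im z)}{|\Delta^{\nu+m+\frac nr}((z-\bar w)/i)|^2}.
\end{equation*}
Hence $\tilde\mu^m(z)\asymp \int_{\mathcal D}\frac{\Delta^{\nu+2m+\frac nr}(\Im z)}{|\Delta^{\nu+m+\frac nr}((z-\bar w)/i)|^2}\,d\mu(w)$, which (after absorbing a harmless power of $\Delta(\Im w)$ and a factor into the normalization) is exactly the ordinary Berezin transform $\widetilde{\mu}_{\nu'}$ of $\mu$ relative to the weight parameter $\nu' := \nu+2m$; more precisely $\tilde\mu^m(z)\asymp \Delta^{\nu'+\frac nr}(\Im z)\int_{\mathcal D}|K_{\nu'}(z,w)|\cdots$ — the point being that all these kernels, after normalizing in $L^2_\nu$, produce the same \emph{type} of positive quantity whose $L^p(d\lambda)$-behaviour is governed by the cut-off $p(\nu'+\frac nr)>2\frac nr-1$, i.e. $p(\nu+\frac nr+2m)>2\frac nr-1$.

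Granting that comparison, the proof proceeds as follows. First I would record the explicit formula for $k_z^{\nu,m}$ and the norm estimate above, citing the integrability lemmas for the determinant function from \cite{FK} (the Bergman-type integrals $\int_{\mathcal D}|\Delta^{-\alpha}((z-\bar w)/i)|\,\Delta^{\beta-\frac nr}(\Im w)\,du\,dv$) and the action of $\Box_z$ from \cite{BBPR}. Second, I would invoke the equivalence (i)$\Leftrightarrow$(iv) of Theorem~\ref{theo:SchattenToep} in the regime $p\ge1$ and the corresponding equivalence in Theorem~\ref{theo:main1} for $p<1$, but applied with the observation that $\tilde\mu^m$ differs from the genuine Berezin transform $\widetilde\mu$ by a pointwise multiplicative factor comparable to $1$ after a change of weight — or, more robustly, I would instead use the equivalence (i)$\Leftrightarrow$(ii) (the $\delta$-lattice condition \eqref{eq:mains}), which is \emph{independent of the choice of $m$ and of the auxiliary weight}, and show directly that $\|\tilde\mu^m\|_{L^p(d\lambda)}^p \asymp \sum_j \big(\mu(B_j)/\Delta^{\nu+n/r}(\Im\zeta_j)\big)^p$ whenever $p(\nu+\frac nr+2m)>2\frac nr-1$. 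The lower bound $\gtrsim$ comes from restricting the integral defining $\tilde\mu^m(z)$ to $w\in B_\delta(\zeta_j)$ and using $|k_z^{\nu,m}(w)|^2\gtrsim \Delta^{-(\nu+n/r)}(\Im\zeta_j)$ for $z,w$ in a fixed ball (Forelli–Rudin / subharmonicity estimates on the kernel, uniform on Bergman balls). The upper bound $\lesssim$ comes from covering $\mathcal D$ by the balls $B_\delta(\zeta_j)$, estimating $|k_z^{\nu,m}(w)|^2\lesssim \Delta^{\nu+2m+n/r}(\Im z)\,\Delta^{-(\nu+m+n/r)}(\Im\zeta_j)\,|\Delta^{-(\nu+m+n/r)}((z-\bar\zeta_j)/i)|$ for $w\in B_\delta(\zeta_j)$, summing in $j$, and finally integrating in $z$ against $d\lambda = \Delta^{-n/r}(\Im z)\,dx\,dy$ — this last integration is finite precisely under the stated cut-off, and it is here (in the off-diagonal sum of the $|\Delta^{-(\nu+m+n/r)}((z-\bar\zeta_j)/i)|^p$ terms) that one needs the lattice version of Schur's test or the B\'ekoll\'e–Temgoua lemma \cite{BT} invoked for Theorem~\ref{theo:main1}, together with Minkowski/Jensen for $p\ge1$ versus the elementary $(\sum a_j)^p\le\sum a_j^p$ for $p<1$.

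The main obstacle is the \textbf{upper bound in the regime $p<1$}, i.e. showing $\|\tilde\mu^m\|_{L^p(d\lambda)}^p\lesssim \sum_j(\mu(B_j)/\Delta^{\nu+n/r}(\Im\zeta_j))^p$. For $p\ge1$ one uses Minkowski's integral inequality to pass the $L^p$ norm inside the sum over $j$, and each term integrates to a constant times $\mu(B_j)^p/\Delta^{p(\nu+n/r)}(\Im\zeta_j)$ by the determinant-integrability lemma with the convergence condition $p(\nu+\frac nr)>2\frac nr-1$ — here the extra $2m$ in the hypothesis is not even needed, matching the fact that Theorem~\ref{theo:SchattenToep} covers all $p\ge1$. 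For $p<1$ the triangle inequality in $L^p$ fails; instead one uses $\big(\sum_j f_j(z)\big)^p\le \sum_j f_j(z)^p$ pointwise and then integrates term by term, so that one needs $\int_{\mathcal D}\big(\Delta^{\nu+2m+n/r}(\Im z)|\Delta^{-(\nu+m+n/r)}((z-\bar\zeta_j)/i)|\big)^p\,d\lambda(z)<\infty$ \emph{uniformly in $j$} with the right dependence on $\Delta(\Im\zeta_j)$. A direct computation of this integral (change of variables moving $\zeta_j$ to $i\mathbf e$, using the Bergman-kernel integral estimates of \cite{FK}) shows it equals a constant times $\Delta^{-p(\nu+n/r)}(\Im\zeta_j)$ \emph{exactly when} $p(2\nu+2m+2\frac nr) - p(\nu+m+\frac nr)\cdot 1 > 2\frac nr - 1$ in the appropriate direction, which simplifies to $p(\nu+\frac nr+2m) > 2\frac nr-1$ — this is precisely why the $2m$ shift appears and why "for some/each $m$ large enough" always works. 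Verifying that this cut-off is exactly the stated one, and checking that the lattice-overlap (finite multiplicity of the covering) is enough to control the off-diagonal interaction for $p<1$ without any further loss, are the technical heart of the argument; they are handled by the same B\'ekoll\'e–Temgoua-type estimate \cite{BT} already used in the proof of Theorem~\ref{theo:main1}, applied with exponent $\nu+m$ in place of $\nu$.
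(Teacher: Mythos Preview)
Your reduction to the lattice condition is the right instinct, and for $p<1$ your upper-bound argument (use $(\sum a_j)^p\le\sum a_j^p$, then integrate each term via Lemma~\ref{lem:Apfunction}) is exactly what the paper does in its Lemma~\ref{lem:nessberpetit}; the convergence condition you isolate, $p(\nu+\tfrac nr+2m)>2\tfrac nr-1$, is indeed precisely what makes that integral finite with the correct power of $\Delta(\Im\zeta_j)$. Your lower bound $\tilde\mu^m(z)\gtrsim\hat\mu_\delta(z)$ is also fine and covers sufficiency in all ranges. (Incidentally, the B\'ekoll\'e--Temgoua lemma is not needed here: finite overlap of the covering plus Lemma~\ref{lem:Apfunction} suffice; that lemma is used elsewhere in the paper, in the proof of (i)$\Rightarrow$(ii) of Theorem~\ref{theo:main1}.)

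The genuine gap is your treatment of the \emph{upper bound for $p\ge 1$}. Minkowski's inequality applied to $\tilde\mu^m(z)\lesssim\sum_j |k_z^{\nu,m}(\zeta_j)|^2\mu(B_j)$ yields
\[
\|\tilde\mu^m\|_{L^p(d\lambda)}\;\le\;C\sum_j \mu(B_j)\,\big\||k_\cdot^{\nu,m}(\zeta_j)|^2\big\|_{L^p(d\lambda)}
\;\asymp\;\sum_j \frac{\mu(B_j)}{\Delta^{\nu+n/r}(\Im\zeta_j)},
\]
which is the $\ell^1$-norm of $\{\hat\mu_\delta(\zeta_j)\}$, not the $\ell^p$-norm. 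Since $\ell^p\not\subset\ell^1$ for $p>1$, this does \emph{not} show that the lattice condition implies $\tilde\mu^m\in L^p(d\lambda)$, and hence does not give the necessity direction $T_\mu\in\mathcal S_p\Rightarrow\tilde\mu^m\in L^p$ for $p>1$. The paper closes this gap by a completely different, operator-theoretic route: it first establishes the exact trace identity $\mathrm{Tr}(T)=C_{n,m}\int_{\mathcal D}\langle Tk_z^{\nu,m},k_z^{\nu,m}\rangle_\nu\,d\lambda(z)$ for positive $T$ (Propositions~\ref{prop:hilberysmigene}--\ref{prop:boxberezin}, via the isometry $\Box^m:A_\nu^2\to A_{\nu+2m}^2$), and then uses the operator inequality $\langle Tg,g\rangle^p\le\langle T^pg,g\rangle$ for unit vectors and $p\ge 1$ to deduce $\int(\tilde\mu^m)^p\,d\lambda\le C\,\mathrm{Tr}(T_\mu^p)=C\|T_\mu\|_{\mathcal S_p}^p$. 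The sufficiency for $p\ge1$ is likewise handled by a direct orthonormal-sequence estimate (Lemma~\ref{lem:suffbergrand}) rather than through the lattice. If you want to stay within your kernel-based framework for $p\ge1$, you would need to replace the Minkowski step by a Schur-test argument showing that the integral operator $f\mapsto\int|k_\cdot^{\nu,m}(w)|^2 f(w)\,dV_\nu(w)$ is bounded on $L^p(d\lambda)$; that is doable, but it is not what you wrote.
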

Again the condition $p\ge \max\{\frac{\frac{n}{r}-1}{\nu+\frac nr},\frac{2\frac nr-1}{\nu+\frac nr+2m}\}$ is due to integrability conditions of the determinant function. 

For the proof of Theorem \ref{thm:main2}, we derive the necessary condition for $1\le p<\infty$ and the sufficient condition for $0<p<1$ from a more general result for any positive operator. The proof of the other parts essentially uses the properties of Bergman balls and the $\delta$-lattices. We also refer to \cite{P,sehba} for this type of results.
\vskip .2cm
We are essentially motivated here by the idea  of extending the results in \cite{NS} on Schatten class $\mathcal{S}_p(A_\nu^2(\mathcal{D}))$ for $P\ge 1$, to the case $0<p<1$, and settling the  problem of the characterization of Schatten class $\mathcal{S}_p(A_\nu^2(\mathcal{D}))$ for $1\le p<2$ for the Ces\`aro-type operator introduced in \cite{NS}. 

\vskip .2cm
The paper is organized as follows. In the next section, we present some useful tools and results needed in the proofs of the above results . The proof of Theorem \ref{theo:main1} is given in Section 3. In Section 4, we provide  characterization of Schatten class for general positive operators. We  prove Theorem \ref{thm:main2} in Section 5. In the last section, we apply our results to extend to the range $1\le p<2$, the characterization of Schatten class $\mathcal{S}_p(A_\nu^2(\mathcal{D}))$  for the Ces\`aro-type operator obtained in \cite{NS}. 

\vskip .2cm
As usual, given two positive quantities $A$ and $B$, the notation $A\lesssim B$ (resp. $A\gtrsim B$) means that there is an absolute
positive constant $C$ such that $A\le CB$ (resp. $A\ge CB$). When $A\lesssim B$ and $B\lesssim A$, we write $A\asymp B$ and say $A$
and $B$ are equivalent. Finally, all over the text,  $C$, $C_k$, $C_{k,j}$ will
denote positive constants depending only on the displayed
parameters but not necessarily the same at distinct
occurrences. The same remark holds for lower case letters.

\section{Preliminary results}

In this section, we give some fundamental facts about symmetric cones, Berezin transform and related results.

\subsection{Symmetric cones, Bergman metric and estimations of the determinant function}
It is well known that a symmetric cone $\Omega$ induces in $V\equiv \mathbb R^n$
a structure of Euclidean Jordan algebra, in which $\overline
{\Omega}=\{x^2:x\in V\}$. Let ${\bf e}$ be the identity element in
$V$. Denote by $G(\Omega)$ the group of transformations of $\mathbb R^n$ leaving invariant  $\Omega$. We recall that  the group $G(\Omega)$ acts transitively on $\Omega.$ We denote by $H$ the subgroup of $G(\Omega)$ that acts simply transitively on $\Omega$, that is for $x,y\in\Omega$ there is a unique $h\in H$ such that $y=hx.$ Observe that if we still denote by $\mathbb R^n$ the group of translation by vectors in $\mathbb R^n$, then the group $G(\mathcal D)=\mathbb {R}^n\times H$ acts simply transitively on $\mathcal D$.

Recall that $\delta>0$,  $$B_\delta(z)=\{w\in \mathcal {D}: d(z,w)<\delta\}$$ is the Bergman ball centered at $z$ with radius $\delta$, where $d(\cdot,\cdot)$ is the Bergman distance (for a definition, see for example \cite{NS}).  It well known that the
measure $d\lambda(z)=\Delta^{-2n/r}(\Im z)dV(z)$ is an
invariant measure on $\mathcal D$ under the actions of $G(\mathcal D)=\mathbb {R}^n\times H$.

We recall the following (see \cite[Theorem 5.4]{BBGNPR}).
\begin{lem}\label{lem:covering}
For any $\delta\in (0, 1)$, there exists a sequence $\{\zeta_j\}$ of points  of $\mathcal D$ called
$\delta$-lattice such that
then
\begin{itemize}
\item[(i)] the balls $B_j'=B_{\frac{\delta}{2}}(\zeta_j)$ are pairwise disjoint;
\item[(ii)] the balls $B_j=B_\delta(\zeta_j)$ cover $\mathcal D$. Moreover, there is an integer N (depending only on $\mathcal{D}$) such
that each point of $\mathcal D$ belongs to at most N of these balls.
\end{itemize}
\end{lem}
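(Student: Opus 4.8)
The plan is to treat $(\cD,d)$ as a homogeneous metric measure space and to run a Vitali-type maximal-separation argument, extracting the finite-overlap bound in (ii) from the invariance and doubling of $d\lambda$. The one structural fact I will lean on throughout is that the group $G(\cD)=\mathbb R^n\times H$ acts transitively on $\cD$ and preserves both the Bergman distance $d$ and the invariant measure $d\lambda$. Consequently every $g\in G(\cD)$ carries $B_\rho(z)$ onto $B_\rho(gz)$ while preserving $\lambda$, so that $v(\rho):=\lambda(B_\rho(z))$ is a finite positive number depending only on $\rho$ and $\cD$ and \emph{independent of the center} $z$. This center-independence of ball volumes is exactly what replaces the boundedness/compactness of the unit ball in the classical setting.

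First I would produce the lattice as a maximal $\delta$-separated subset of $\cD$. Consider the family of all $S\subset\cD$ with $d(\zeta,\zeta')\ge\delta$ for all distinct $\zeta,\zeta'\in S$, partially ordered by inclusion; the union of a chain of such sets is again $\delta$-separated, so Zorn's lemma furnishes a maximal element $\{\zeta_j\}$. The $\delta$-separation makes the nonempty open balls $B_{\delta/2}(\zeta_j)$ pairwise disjoint, and since $\cD\subset\mathbb C^n$ is separable, such a family is at most countable, so the index set is indeed $\mathbb N$ (or finite). Property (i) is then immediate: if some $w$ lay in $B_{\delta/2}(\zeta_i)\cap B_{\delta/2}(\zeta_j)$ with $i\ne j$, the triangle inequality would give $d(\zeta_i,\zeta_j)\le d(\zeta_i,w)+d(w,\zeta_j)<\delta$, contradicting separation. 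The covering in (ii) is exactly maximality: for $z\in\cD$, either $z$ is some $\zeta_j$, or $\{\zeta_j\}\cup\{z\}$ fails to be $\delta$-separated, forcing $d(z,\zeta_j)<\delta$ for some $j$; in both cases $z\in B_\delta(\zeta_j)$.

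The substantive point, and the step I expect to be the main obstacle, is the uniform finite multiplicity. Fix $z\in\cD$ and let $\zeta_{j_1},\dots,\zeta_{j_k}$ be those centers with $z\in B_\delta(\zeta_{j_i})$. The half-balls $B_{\delta/2}(\zeta_{j_i})$ are pairwise disjoint by (i), and each lies in $B_{3\delta/2}(z)$, since $w\in B_{\delta/2}(\zeta_{j_i})$ gives $d(z,w)\le d(z,\zeta_{j_i})+d(\zeta_{j_i},w)<\delta+\tfrac{\delta}{2}$. Taking $\lambda$-measures and using the center-independence of ball volumes,
$$k\,v(\tfrac{\delta}{2})=\sum_{i=1}^k\lambda\!\big(B_{\delta/2}(\zeta_{j_i})\big)=\lambda\Big(\bigcup_{i=1}^k B_{\delta/2}(\zeta_{j_i})\Big)\le\lambda\big(B_{3\delta/2}(z)\big)=v(\tfrac{3\delta}{2}),$$
so that $k\le v(\tfrac{3\delta}{2})/v(\tfrac{\delta}{2})$. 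To conclude that $N$ may be chosen depending only on $\cD$ (and not on $\delta$), I would invoke the doubling property of $d\lambda$ with respect to the Bergman metric, $v(2\rho)\lesssim v(\rho)$ with a constant uniform for $\rho$ in a bounded range, which is standard for these homogeneous cone domains (cf. the volume estimates in \cite{BBGNPR}). Since $\delta\in(0,1)$, this bounds $v(\tfrac{3\delta}{2})/v(\tfrac{\delta}{2})$ by a constant depending only on $\cD$, and taking $N$ to be its integer part finishes the proof. The only delicate issue is precisely this uniformity in $\delta$; everything else is a formal consequence of homogeneity and the triangle inequality.
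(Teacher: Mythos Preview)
Your argument is correct and is precisely the standard Vitali/maximal-separation construction one expects here. The paper itself does not prove this lemma; it simply quotes it from \cite[Theorem~5.4]{BBGNPR}, whose proof follows exactly the route you take (maximal $\delta$-separated set, disjointness of half-balls by the triangle inequality, covering by maximality, and the packing bound $k\le v(3\delta/2)/v(\delta/2)$ obtained from the $G(\cD)$-invariance of $d$ and $d\lambda$). Your identification of the only nontrivial step---that $v(3\delta/2)/v(\delta/2)$ stays bounded uniformly for $\delta\in(0,1)$---is accurate, and it is handled in \cite{BBGNPR} via the local comparability of $d\lambda$ with Lebesgue measure on Bergman balls of bounded radius, which yields the doubling estimate you invoke.
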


\begin{rem}\label{rem:separated}
Let $A>0$ be fixed. Then any $\delta$-lattice $\{\zeta_j\}$ admits a decomposition into a finite number of  sequences $\{\zeta_{j_k}\}$ satisfying $d(\zeta_{j_{k_1}}, \zeta_{j_{k_2}})\geq A$ for $j_{k_1}\neq j_{k_2}.$
\end{rem}

We observe that
$$\int_{B_j}dV_\nu(z)\approx \int_{B'_j}dV_\nu(z)\approx
C_{\delta}\Delta^{\nu+n/r}(\Im \zeta_j).$$

\vskip .2cm
We refer to \cite[Theorem 5.6]{BBGNPR} for the following sampling theorem.
\begin{lem}\label{lem:sampling}
Let $\{\zeta_{j}\}_{j\in \N}$ be a $\da$-lattice in $\mathcal D$,
$\da \in (0,1)$. Then the following
assertions hold.
\begin{itemize}
\item[(1)] There is a positive
constant $C_{\da}$ such that every $f\in A_{\nu}^{p}(\mathcal D)$ satisfies
$$ ||\{f(\zeta_{j})\Delta^{\frac{1}{p}(\nu+\frac nr)}(\Im \zeta_j)\}||_{l^p} \le C_{\da} ||f||_{p,\nu}. $$
\item[(2)] Conversely, if $\da$ is small enough, there is a
positive constant $C_{\da}$ such that every $f\in A_{\nu}^{p}(\mathcal D)$
satisfies $$||f||_{p,\nu} \le
C_{\da}||\{f(\zeta_{j})\Delta^{\frac{1}{p}(\nu+\frac nr)}(\Im \zeta_j)\}||_{l^p}.$$
\end{itemize}
\end{lem}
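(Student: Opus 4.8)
The plan is to prove the two inequalities separately, relying on the elementary mean-value (subharmonicity) properties of $|f|^p$ for holomorphic $f$ together with the covering properties of the $\delta$-lattice recorded in Lemma \ref{lem:covering}. Throughout I use that on a Bergman ball of fixed radius the weight $\Delta^{\nu-n/r}(\Im w)$ is comparable to its value at the center (by invariance of the Bergman metric under $G(\mathcal D)$), and that $V_\nu(B_j)\approx V_\nu(B_j')\approx\Delta^{\nu+n/r}(\Im\zeta_j)$, as noted just before the statement.

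\textbf{Part (1).} For the sampling inequality I would start from the fact that, for $f$ holomorphic, $|f|^p$ is subharmonic, so for each $j$ the mean-value inequality over the small ball $B_j'=B_{\delta/2}(\zeta_j)$ yields
$$|f(\zeta_j)|^p\lesssim \frac{1}{V_\nu(B_j')}\int_{B_j'}|f(w)|^p\,dV_\nu(w)\approx\frac{1}{\Delta^{\nu+n/r}(\Im\zeta_j)}\int_{B_j'}|f(w)|^p\,dV_\nu(w).$$
Multiplying through by $\Delta^{\nu+n/r}(\Im\zeta_j)$ and summing, the pairwise disjointness of the $B_j'$ from Lemma \ref{lem:covering}(i) gives
$$\sum_j|f(\zeta_j)|^p\,\Delta^{\nu+n/r}(\Im\zeta_j)\lesssim\sum_j\int_{B_j'}|f|^p\,dV_\nu\le\int_{\mathcal D}|f|^p\,dV_\nu=\|f\|_{p,\nu}^p,$$
which is exactly the claimed bound, since the left-hand side is the $p$-th power of the $l^p$ norm of $\{f(\zeta_j)\Delta^{\frac1p(\nu+\frac nr)}(\Im\zeta_j)\}$.

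\textbf{Part (2).} For the converse I would use that the balls $B_j=B_\delta(\zeta_j)$ cover $\mathcal D$ (Lemma \ref{lem:covering}(ii)), so $\|f\|_{p,\nu}^p\le\sum_j\int_{B_j}|f|^p\,dV_\nu$, and then split $|f(w)|\le|f(\zeta_j)|+|f(w)-f(\zeta_j)|$ on each $B_j$. Since $p\ge1$, the first term contributes $\lesssim\sum_j|f(\zeta_j)|^p V_\nu(B_j)\approx\sum_j|f(\zeta_j)|^p\Delta^{\nu+n/r}(\Im\zeta_j)$, the desired right-hand side. The heart of the matter is the oscillation term: I would prove a Lipschitz-type estimate in the Bergman metric, namely that for $w\in B_\delta(\zeta_j)$ one has $|f(w)-f(\zeta_j)|\lesssim\delta\,\sup_{B_{2\delta}(\zeta_j)}|f|$, and then control the supremum by subharmonicity against an $L^p$ average on a slightly larger ball, using $V_\nu(B_{3\delta}(\zeta_j))\approx\Delta^{\nu+n/r}(\Im\zeta_j)$, to arrive at
$$\int_{B_j}|f(w)-f(\zeta_j)|^p\,dV_\nu(w)\lesssim\delta^p\int_{B_{3\delta}(\zeta_j)}|f|^p\,dV_\nu.$$

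Summing over $j$ and invoking the finite-overlap property of Lemma \ref{lem:covering}(ii), applied to the enlarged balls $B_{3\delta}(\zeta_j)$ (legitimate for small $\delta$, so that $3\delta<1$), bounds $\sum_j\int_{B_{3\delta}(\zeta_j)}|f|^p\,dV_\nu$ by $C_N\|f\|_{p,\nu}^p$; altogether
$$\|f\|_{p,\nu}^p\le C\Big(\sum_j|f(\zeta_j)|^p\,\Delta^{\nu+n/r}(\Im\zeta_j)+\delta^p\,\|f\|_{p,\nu}^p\Big).$$
Choosing $\delta$ small enough that $C\delta^p<\tfrac12$ lets me absorb the last term into the left-hand side, which yields part (2); this is precisely the role of the hypothesis that $\delta$ be small. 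The main obstacle is the oscillation estimate: establishing the Bergman-metric Lipschitz bound for holomorphic functions and the passage from a pointwise supremum to an $L^p$ average on a slightly larger ball both rest on the invariance of the Bergman metric under $G(\mathcal D)$ and on the uniform comparability of $\Delta(\Im\,\cdot\,)$ across a fixed-radius ball.
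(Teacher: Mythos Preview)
The paper does not supply its own proof of this lemma: it simply cites \cite[Theorem 5.6]{BBGNPR} and states the result. So there is no ``paper's proof'' to compare against directly; what you have written is in fact a sketch of the standard Coifman--Rochberg argument that underlies the cited result, and it is essentially correct.

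A few remarks. Your Part~(1) is exactly the usual argument and is complete as written. For Part~(2), the splitting $|f(w)|\le |f(\zeta_j)|+|f(w)-f(\zeta_j)|$ followed by the oscillation/absorption trick is precisely the right idea; note, though, that the phrase ``Since $p\ge1$'' is unnecessary, since $(a+b)^p\le C_p(a^p+b^p)$ holds for every $p>0$, so nothing in your argument actually uses $p\ge1$. The two points you flag as the ``main obstacle'' --- the Bergman-metric Lipschitz estimate $|f(w)-f(\zeta_j)|\lesssim \delta\,\sup_{B_{2\delta}(\zeta_j)}|f|$ and the passage from a pointwise sup to an $L^p$ average on a slightly larger ball --- are indeed the technical heart of the matter, and both reduce, via the simply transitive action of $G(\mathcal D)$, to a single Cauchy-estimate computation on a fixed ball around $i\mathbf{e}$ (this is how the proof in \cite{BBGNPR} is organized as well). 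One should also track the $\delta$-dependence carefully: the mean-value estimate of Lemma~\ref{lem:meanvalue} carries a factor $\delta^{-n}$, but this is exactly cancelled by $V_\nu(B_j)\approx c\,\delta^{n}\Delta^{\nu+n/r}(\Im\zeta_j)$, leaving the crucial surviving factor $\delta^p$ needed for absorption.
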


We have the following atomic decomposition with change of weight which is derived from \cite[Theorem 3.2]{NS}.
\begin{thm}\label{theo:atomdecompo} Let  $\mu,\,\,\nu >\frac{n}{r}-1$.  Assume that
the operator $P_{\mu}$ is bounded on $L_{\nu}^{2}(\mathcal D)$ and let $\{\zeta_{j}\}_{j\in \N}$
be a $\da$-lattice in $\mathcal D$. Then the following assertions hold.
\begin{itemize}
\item[(i)] For every complex sequence $\{\la_{j}\}_{j\in \N}$ in
$l^{2}$, the series  $$\sum_{j} {\lambda_{j}K_{\mu}(z,
\zeta_{j})\Delta^{\mu + \frac{n}{r}-\frac{1}{2}(\nu+\frac nr)}(\Im \zeta_{j})}$$ is convergent in
$A_{\nu}^{2}(\mathcal D)$. Moreover, its sum $f$ satisfies the inequality
$$||f||_{2,\nu}\leq C_{\delta}||\{\la_{j}\}||_{l^{2}},$$ where
$C_{\delta}$ is a positive constant.
\item[(ii)] For $\da$ small
enough, every function $f\in A_{\nu}^{2}(\mathcal D)$ may be written as
$$ f(z) = \sum_{j} {\lambda_{j}K_{\mu}(z,
\zeta_{j})\Delta^{\mu + \frac{n}{r}-\frac{1}{2}(\nu+\frac nr)}(\Im \zeta_{j})}$$ with
\begin{equation}\label{eq:reverseineqatomdecomp}
||\{\la_{j}\}||_{l^{2}}\le
C_{\da}||f||_{2,\nu}\Ee where $C_{\da}$ is a positive
constant.
\end{itemize}
\end{thm}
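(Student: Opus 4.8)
The plan is to deduce Theorem~\ref{theo:atomdecompo} from the atomic decomposition \cite[Theorem 3.2]{NS} by specializing to $p=2$, the one substantive point being to check that the exponent $\mu+\frac nr-\frac12(\nu+\frac nr)$ is the correct $L^2_\nu$-normalization of the kernels $K_\mu(\cdot,\zeta_j)$. By the classical integral formula for the determinant function on $\mathcal D$ (see \cite{FK}), for $w\in\mathcal D$,
\[
\|K_\mu(\cdot,w)\|_{2,\nu}^2=c_\mu^2\int_{\mathcal D}\frac{\Delta^{\nu-\frac nr}(\Im z)}{\bigl|\Delta\bigl(\tfrac{z-\bar w}{i}\bigr)\bigr|^{2(\mu+\frac nr)}}\,dV(z)\asymp\Delta^{\nu-\frac nr-2\mu}(\Im w),
\]
the integral being finite precisely when $2\mu+\frac nr-\nu>\frac nr-1$, i.e. $\nu<2\mu+1$; this is the range in which $P_\mu$ is bounded on $L^2_\nu(\mathcal D)$, so the standing hypothesis is exactly what is needed. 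Hence $\|K_\mu(\cdot,\zeta_j)\|_{2,\nu}\asymp\Delta^{-(\mu+\frac nr-\frac12(\nu+\frac nr))}(\Im\zeta_j)$, and the atoms $a_j(z):=K_\mu(z,\zeta_j)\Delta^{\mu+\frac nr-\frac12(\nu+\frac nr)}(\Im\zeta_j)$ are, up to a fixed constant, the $L^2_\nu$-normalized kernels $K_\mu(\cdot,\zeta_j)/\|K_\mu(\cdot,\zeta_j)\|_{2,\nu}$, which is the normalization in \cite[Theorem 3.2]{NS}. For completeness I recall the mechanism behind both assertions.

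For (ii): since $P_\mu$ is bounded on $L^2_\nu(\mathcal D)$ it reproduces a dense subspace of $A^2_\nu(\mathcal D)$, hence $P_\mu f=f$ for every $f\in A^2_\nu(\mathcal D)$, i.e. $f(z)=\int_{\mathcal D}K_\mu(z,w)f(w)\,dV_\mu(w)$. Fixing a measurable partition $\{D_j\}$ of $\mathcal D$ with $B_{\delta/2}(\zeta_j)\subset D_j\subset B_\delta(\zeta_j)$ (which exists by Lemma~\ref{lem:covering}) and setting $\hat T_\delta f(z)=\sum_j f(\zeta_j)V_\mu(D_j)K_\mu(z,\zeta_j)$, the relation $V_\mu(D_j)\asymp\Delta^{\mu+\frac nr}(\Im\zeta_j)$ shows that $\hat T_\delta f$ has the required form with $\lambda_j\asymp f(\zeta_j)\Delta^{\frac12(\nu+\frac nr)}(\Im\zeta_j)$. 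One then shows $\|P_\mu-\hat T_\delta\|_{A^2_\nu\to A^2_\nu}=O(\delta)$ by a Schur-type/duality argument exploiting the decay of $K_\mu$ off the diagonal, the smoothness of $K_\mu$ on Bergman balls, and the finite-overlap property of Lemma~\ref{lem:covering}(ii); thus $\hat T_\delta$ is invertible on $A^2_\nu(\mathcal D)$ for $\delta$ small, and writing $f=\hat T_\delta(\hat T_\delta^{-1}f)$ yields the decomposition with $\|\{\lambda_j\}\|_{l^2}\lesssim\|\hat T_\delta^{-1}f\|_{2,\nu}\lesssim\|f\|_{2,\nu}$ by Lemma~\ref{lem:sampling}(1). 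For (i): given $\{\lambda_j\}\in l^2$, from $\|\sum_j\lambda_j a_j\|_{2,\nu}^2=\sum_{j,k}\lambda_j\overline{\lambda_k}\langle a_j,a_k\rangle_\nu$ (applied to finite partial sums, this gives both convergence in $A^2_\nu$ and the bound) it suffices to show the Gram matrix $\bigl(\langle a_j,a_k\rangle_\nu\bigr)_{j,k}$ is bounded on $l^2$; this follows from Schur's test with constant weights, using $|\langle a_j,a_k\rangle_\nu|\lesssim e^{-c\,d(\zeta_j,\zeta_k)}$ --- a consequence of the kernel bounds and the norm computation above --- together with the separation of the $\delta$-lattice (Lemma~\ref{lem:covering}(i) and Remark~\ref{rem:separated}).

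The main obstacle is the estimate $\|P_\mu-\hat T_\delta\|_{A^2_\nu\to A^2_\nu}=O(\delta)$: because of the change of weight $\mu\neq\nu$ one cannot use any reproducing property of $K_\mu$ on $A^2_\nu$, and a naive term-by-term estimate of the error produces only an $l^1$-type bound on the coefficients; the required operator-norm smallness must instead be extracted from the off-diagonal decay of $K_\mu$, controlled through $\|P_\mu\|_{L^2_\nu\to L^2_\nu}$. This is precisely the technical content of \cite[Theorem 3.2]{NS} (and of \cite{BBGNPR} in the case $\mu=\nu$); once it is granted, the remainder is bookkeeping with the determinant-function integrals and the Bergman-ball geometry recorded in Section~2.
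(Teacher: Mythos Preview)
Your main approach --- deduce the statement from \cite[Theorem~3.2]{NS} specialized to $p=2$, after checking via Lemma~\ref{lem:Apfunction} that the exponent $\mu+\frac nr-\frac12(\nu+\frac nr)$ is precisely the $L^2_\nu$-normalization of $K_\mu(\cdot,\zeta_j)$ --- is exactly what the paper does: it records the theorem as ``derived from \cite[Theorem~3.2]{NS}'' and supplies no further argument. Your normalization computation and the observation that the integrability condition $\nu<2\mu+1$ is implied by the boundedness hypothesis are correct and useful additions.

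One caution about your ``for completeness'' sketch of (i). Schur's test with \emph{constant} weights on the Gram matrix $(\langle a_j,a_k\rangle_\nu)$ does not go through over the full range $\nu>\frac nr-1$. Taking $\mu=\nu$ for concreteness, $|\langle a_j,a_k\rangle_\nu|=|K_\nu(\zeta_j,\zeta_k)|\,\Delta^{\frac12(\nu+\frac nr)}(\Im\zeta_j)\,\Delta^{\frac12(\nu+\frac nr)}(\Im\zeta_k)$, and comparing $\sum_k|\langle a_j,a_k\rangle_\nu|$ with the corresponding integral via Lemma~\ref{lem:meanvalue} leads, after simplification, to
\[
\int_{\mathcal D}\bigl|\Delta^{-(\nu+\frac nr)}\bigl(\tfrac{z-\bar\zeta_j}{i}\bigr)\bigr|\,\Delta^{\frac12(\nu-\frac nr)-\frac nr}(\Im z)\,dV(z),
\]
which by Lemma~\ref{lem:Apfunction} is finite only when $\nu>3\frac nr-2$. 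In higher rank the exponential volume growth of Bergman balls can outpace the kernel decay, so the heuristic $|\langle a_j,a_k\rangle_\nu|\lesssim e^{-c\,d(\zeta_j,\zeta_k)}$ plus separation is not enough. The proof in \cite{BBGNPR,NS} avoids this entirely: one writes $\sum_j\lambda_j a_j$ (up to a Kor\'anyi-lemma error) as $P_\mu g$ for the step function $g=\sum_j c_j\chi_{D_j}$ with $\|g\|_{2,\nu}\asymp\|\{\lambda_j\}\|_{l^2}$, and then invokes directly the assumed boundedness of $P_\mu$ on $L^2_\nu$. Your sketch of (ii) is fine and is the standard mechanism.
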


The following consequence of the mean value theorem (see \cite{BBGNPR}) is needed.
\begin{lem}\label{lem:meanvalue}
There exists a constant $C>0$ such that for any $f\in \mathcal {H}(\mathcal D)$ and $\delta\in (0,1]$, the following holds
\begin{equation}\label{eq:meanvalue}
|f(z)|^p\le C\delta^{-n}\int_{B_\delta(z)}|f(\zeta)|^p\frac{dV(\zeta)}{\Delta^{2n/r}(\Im \zeta)}.
\end{equation}
\end{lem}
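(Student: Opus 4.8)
The final statement is Lemma \ref{lem:meanvalue}, the mean-value inequality
\begin{equation*}
|f(z)|^p\le C\delta^{-n}\int_{B_\delta(z)}|f(\zeta)|^p\frac{dV(\zeta)}{\Delta^{2n/r}(\Im \zeta)},
\end{equation*}
valid for all $f\in\mathcal H(\mathcal D)$ and $\delta\in(0,1]$, with $C$ independent of $f,z,\delta$. The plan is to reduce the estimate at an arbitrary point $z\in\mathcal D$ to a fixed reference point by exploiting the transitive action of $G(\mathcal D)=\mathbb R^n\times H$ and the invariance of $d\lambda$, and then to establish the inequality at the reference point by a classical sub-mean-value argument applied to the plurisubharmonic function $|f|^p$.

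First I would fix the base point $i\mathbf e\in\mathcal D$ and observe that for any $z\in\mathcal D$ there is a unique $g\in G(\mathcal D)$ with $g(i\mathbf e)=z$; since $G(\mathcal D)$ acts by biholomorphisms that are affine (translation composed with an element of $H\subset GL(\mathbb R^n)$ acting $\mathbb C$-linearly on $\mathbb C^n$), the Bergman distance is $G(\mathcal D)$-invariant, so $g$ maps $B_\delta(i\mathbf e)$ onto $B_\delta(z)$. Writing $f_g:=f\circ g$, which is again holomorphic, the problem at $z$ is transformed into the same problem at $i\mathbf e$ for $f_g$, because $|f(z)|=|f_g(i\mathbf e)|$ and the measure $\frac{dV(\zeta)}{\Delta^{2n/r}(\Im\zeta)}=d\lambda(\zeta)$ is exactly the $G(\mathcal D)$-invariant measure recalled in the text. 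Thus the change of variables $\zeta=g(\eta)$ turns $\int_{B_\delta(z)}|f(\zeta)|^p\,d\lambda(\zeta)$ into $\int_{B_\delta(i\mathbf e)}|f_g(\eta)|^p\,d\lambda(\eta)$, and it suffices to prove the inequality once, at $i\mathbf e$, with a constant that does not depend on $g$.

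Next I would establish the inequality at $i\mathbf e$. Since $|f_g|^p$ is plurisubharmonic (being the $p$-th power of the modulus of a holomorphic function, with $p>0$), it satisfies the sub-mean-value property over any Euclidean ball contained in $\mathcal D$. Because the Bergman ball $B_\delta(i\mathbf e)$ for $\delta\in(0,1]$ contains a Euclidean ball $E_\delta$ of radius comparable to $\delta$ centered at $i\mathbf e$ (the Bergman and Euclidean metrics are comparable on a fixed neighborhood of the base point, uniformly for $\delta\le 1$), the sub-mean-value inequality gives
\begin{equation*}
|f_g(i\mathbf e)|^p\le \frac{1}{\mathrm{vol}(E_\delta)}\int_{E_\delta}|f_g(\eta)|^p\,dV(\eta)\lesssim \delta^{-2n}\int_{E_\delta}|f_g(\eta)|^p\,dV(\eta),
\end{equation*}
since $\mathrm{vol}(E_\delta)\asymp\delta^{2n}$ in real dimension $2n$. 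On the fixed set $B_\delta(i\mathbf e)\subset B_1(i\mathbf e)$ the weight $\Delta^{-2n/r}(\Im\eta)$ is bounded above and below by positive constants, so $dV(\eta)\asymp \Delta^{-2n/r}(\Im\eta)\,dV(\eta)=d\lambda(\eta)$ there; absorbing $\delta^{-2n}\asymp\delta^{-n}\cdot\delta^{-n}$ — and here I must check the exact power, using that the comparison $B_\delta\supset E_{c\delta}$ can be sharpened so that the effective Euclidean radius scales like $\delta$ and yields precisely the stated $\delta^{-n}$ after accounting for the anisotropic scaling of the cone metric — I obtain $|f_g(i\mathbf e)|^p\lesssim \delta^{-n}\int_{B_\delta(i\mathbf e)}|f_g|^p\,d\lambda$.

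The main obstacle I anticipate is pinning down the correct power of $\delta$, namely $\delta^{-n}$ rather than the naive $\delta^{-2n}$ coming from the real $2n$-dimensional Euclidean volume. This discrepancy reflects the anisotropy of the Bergman metric on a tube over a cone: the Bergman ball $B_\delta(i\mathbf e)$ is not comparable to a round Euclidean ball but to a box whose side lengths scale differently in the $r$ ``radial'' Jordan-frame directions and the remaining directions, so that $V_\nu(B_\delta)\asymp\delta^{?}\Delta^{\nu+n/r}(\Im\zeta)$ with a specific exponent. I would handle this by transporting the estimate to the reference point as above, where the invariant volume of $B_\delta(i\mathbf e)$ is a fixed function of $\delta$, and then tracking the scaling carefully; the cleanest route is to invoke the volume estimate $\int_{B_\delta(z)}dV_\nu\asymp C_\delta\Delta^{\nu+n/r}(\Im\zeta_j)$ recorded just before Lemma \ref{lem:sampling} together with the comparison between $B_{\delta}(i\mathbf e)$ and a Euclidean polydisc, so that the single scalar constant $C\delta^{-n}$ emerges uniformly in $\delta\in(0,1]$ and in $z$. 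Once the scaling is correct at $i\mathbf e$, the $G(\mathcal D)$-invariance closes the argument for all $z$.
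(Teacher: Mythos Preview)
The paper does not supply a proof of this lemma at all; it simply records the inequality as a ``consequence of the mean value theorem'' and cites \cite{BBGNPR}. So there is no argument in the paper to compare against, and your plan---reduce to the base point $i\mathbf e$ via the simply transitive action of $G(\mathcal D)$ and the invariance of $d\lambda$, then invoke the sub-mean-value property of the plurisubharmonic function $|f|^p$ over a Euclidean ball contained in $B_\delta(i\mathbf e)$---is exactly the standard proof that underlies the cited reference.

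The one genuine issue you flag yourself is the exponent of $\delta$. Your argument naturally produces $\delta^{-2n}$, since $B_\delta(i\mathbf e)$ is a geodesic ball of radius $\delta$ in a $2n$-real-dimensional Riemannian manifold and hence has invariant volume $\asymp\delta^{2n}$ for small $\delta$; there is no anisotropic miracle that upgrades this to $\delta^{-n}$. The $\delta^{-n}$ in the stated lemma is most likely a misprint carried over from the source (and indeed in every application in the present paper---Lemma~\ref{lem:sumdeltafunctestim}, Lemma~\ref{lem:suffbergrand}---$\delta$ is fixed in $(0,1]$, so only the existence of \emph{some} constant $C_\delta$ matters, and the precise power is irrelevant). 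Your attempt to recover $\delta^{-n}$ by invoking ``anisotropic scaling of the cone metric'' is not going to succeed, and you should not try: simply prove the inequality with $C\delta^{-2n}$ (or, if you prefer, with an unspecified $C_\delta$), note that this is what the argument actually gives, and observe that the stated exponent does not affect any downstream use.
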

We recall the following integrability conditions for the determinant function (see \cite[Lemma 3.20]{BBGNPR}).
\begin{lem}\label{lem:Apfunction} Let $\alpha$ be real. Then
the function
$f(z)=\Delta^{-\alpha}(\frac{z+it}{i})$, with $t \in \O$, belongs to
$L_{\nu}^{p}(\mathcal{D})$ if and only if $\nu>\frac{n}{r}-1$ and $p\alpha >
\nu+2\frac{n}{r}-1$. In this
case,$$||f||_{p,\nu}^p=C_{\alpha,p}\Da^{-p\alpha +\frac{n}{r}
+ \nu}(t).$$
%\end{itemize}
\end{lem}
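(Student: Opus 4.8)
The plan is to compute $\|f\|_{p,\nu}^p$ directly as an iterated integral and read off both the convergence conditions and the closed form from the two fundamental integral identities for the determinant function on a symmetric cone. Writing $z=x+iy$ with $y\in\Omega$, a direct simplification gives $\frac{z+it}{i}=(y+t)-ix$, so that $|f(z)|^p=\left|\Delta\big((y+t)-ix\big)\right|^{-p\alpha}$. Since the integrand is nonnegative, Tonelli's theorem lets me write
\begin{equation*}
\|f\|_{p,\nu}^p=\int_{\Omega}\Delta^{\nu-\frac{n}{r}}(y)\left(\int_{\mathbb R^n}\left|\Delta\big((y+t)-ix\big)\right|^{-p\alpha}\,dx\right)dy,
\end{equation*}
and the whole quantity is finite if and only if this iterated integral is finite.

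First I would treat the inner integral. Setting $u=y+t\in\Omega$, I invoke the standard ``flat'' integration formula (see \cite{FK}, the building block behind the Bergman kernel computations): for real $s$ and $u\in\Omega$,
\begin{equation*}
\int_{\mathbb R^n}\left|\Delta(u-ix)\right|^{-s}\,dx=c_s\,\Delta^{\frac{n}{r}-s}(u),
\end{equation*}
where the integral converges, with $c_s<\infty$, precisely when $s>2\frac{n}{r}-1$. Applying this with $s=p\alpha$ reduces the problem, under the provisional hypothesis $p\alpha>2\frac{n}{r}-1$, to the outer integral
\begin{equation*}
\|f\|_{p,\nu}^p=c_{p\alpha}\int_{\Omega}\Delta^{\nu-\frac{n}{r}}(y)\,\Delta^{\frac{n}{r}-p\alpha}(y+t)\,dy.
\end{equation*}
For this I would use the Beta-type integral over the cone (again \cite{FK}): for $t\in\Omega$,
\begin{equation*}
\int_{\Omega}\Delta^{a-\frac{n}{r}}(y)\,\Delta^{-b}(y+t)\,dy=C_{a,b}\,\Delta^{a-b}(t),
\end{equation*}
which converges if and only if $a>\frac{n}{r}-1$ and $b-a>\frac{n}{r}-1$. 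Taking $a=\nu$ and $b=p\alpha-\frac{n}{r}$, these two conditions read $\nu>\frac{n}{r}-1$ and $p\alpha>\nu+2\frac{n}{r}-1$, while the value becomes $C\,\Delta^{\nu-(p\alpha-\frac{n}{r})}(t)=C\,\Delta^{-p\alpha+\frac{n}{r}+\nu}(t)$, which is the asserted closed form with $C_{\alpha,p}=c_{p\alpha}C_{a,b}$.

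To assemble the equivalence I would note that $\nu>\frac{n}{r}-1\ge 0$ forces $\nu>0$ (recall $\frac{n}{r}\ge 1$ for an irreducible symmetric cone), so $p\alpha>\nu+2\frac{n}{r}-1$ automatically implies $p\alpha>2\frac{n}{r}-1$; hence the provisional inner-integral condition is subsumed, and the combined necessary and sufficient condition for $\|f\|_{p,\nu}^p<\infty$ is exactly $\nu>\frac{n}{r}-1$ and $p\alpha>\nu+2\frac{n}{r}-1$. The ``only if'' direction is then immediate from positivity of the integrand: if either inequality fails, the relevant integral diverges and $\|f\|_{p,\nu}^p=+\infty$. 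The only genuinely analytic input, and the main obstacle, is establishing the two integral identities above with their sharp convergence ranges; both rest on the Laplace transform of $\Delta^{s-\frac{n}{r}}$ over $\Omega$ and the Gindikin gamma function $\Gamma_\Omega$, and I would cite \cite{FK} (and \cite{BBGNPR}) for their precise form rather than redo these computations.
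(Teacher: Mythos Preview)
Your argument is correct and is precisely the standard route to this result: split the $L^p_\nu$ integral as an $x$-integral followed by a $y$-integral, evaluate the first via the ``flat'' formula $\int_{\mathbb R^n}|\Delta(u-ix)|^{-s}\,dx=c_s\Delta^{n/r-s}(u)$ (valid iff $s>2\tfrac{n}{r}-1$), and then evaluate the remaining cone integral via the Gindikin Beta identity. The bookkeeping of the convergence thresholds and the closed form are all accurate, including the observation that $\nu>\tfrac{n}{r}-1\ge 0$ makes the inner-integral condition $p\alpha>2\tfrac{n}{r}-1$ redundant once $p\alpha>\nu+2\tfrac{n}{r}-1$ holds.

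There is nothing to compare against in the paper itself: the lemma is not proved there but is quoted verbatim from \cite[Lemma~3.20]{BBGNPR}. Your write-up is essentially the proof one finds in that reference (and ultimately in \cite{FK}), so you are reproducing the cited argument rather than giving an alternative one.
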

We will be using the following Kor\'anyi's lemma.
\begin{lem}\label{lem:Koranyi}\cite[Theorem 1.1]{BIN}
For every $\delta>0,$ there is a constant $C_{\delta}>0$ such that
$$\left|\frac{K(\zeta,z)}{K(\zeta,w)}-1\right|\leq C_{\delta}d(z,w)$$
for all $\zeta,z,w\in \mathcal D,$ with $d(z,w)\leq \delta.$
\end{lem}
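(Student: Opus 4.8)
\emph{Proof plan.} The plan is to exploit the invariance of the entire inequality under the simply transitive group $G(\cD)=\mathbb R^n\times H$, to reduce it to a uniform first-order estimate at the base point $i\be$, and then to promote that local estimate to arbitrary $\delta$ by a telescoping argument along a Bergman geodesic.

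First I would record the invariance. Writing $K=K_\nu$ with $K_\nu(\zeta,z)=c_\nu\Da^{-(\nu+\frac nr)}((\zeta-\bar z)/i)$, for $g\in G(\cD)$ of the form $g(z)=Az+b$ with $A\in H$ and $b\in\mathbb R^n$ one has $(g\zeta-\overline{gz})/i=A\big((\zeta-\bar z)/i\big)$, whence $\Da((g\zeta-\overline{gz})/i)=\chi(A)\,\Da((\zeta-\bar z)/i)$ for the character $\chi$ of the determinant. The factor $\chi(A)^{-(\nu+\frac nr)}$ therefore cancels in the quotient, so that
$$\frac{K(g\zeta,gz)}{K(g\zeta,gw)}=\frac{K(\zeta,z)}{K(\zeta,w)},$$
while $d(gz,gw)=d(z,w)$ by $G(\cD)$-invariance of the Bergman metric. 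Hence it suffices to prove the estimate after applying the unique $g$ sending $z$ to $i\be$; then $d(z,w)\le\delta$ forces $w$ into the relatively compact set $\overline{B_\delta(i\be)}$, while $\zeta$ still ranges over all of $\cD$.

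With $z=i\be$, set $P=P(\zeta)=(\zeta+i\be)/i$ and, for $w=i\be+h\in\cD$, $Q=(\zeta-\bar w)/i=P+i\bar h$, so that $\frac{K(\zeta,z)}{K(\zeta,w)}=\big(\Da(Q)/\Da(P)\big)^{\nu+\frac nr}$ with $\Da(Q)/\Da(P)=1$ at $w=i\be$. The crux is the uniform bound
$$\left|\frac{\Da(Q)}{\Da(P)}-1\right|=\left|\frac{\Da(P+i\bar h)-\Da(P)}{\Da(P)}\right|\le C\,|h|,\qquad \zeta\in\cD,$$
for $|h|$ small. Here I expand the degree-$r$ polynomial $\Da$ by Taylor's formula and divide by $\Da(P)$; the resulting coefficients are the normalized derivatives $D^k\Da(P)/\Da(P)$, which are polynomial expressions in the Jordan inverse $P^{-1}$. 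The real (Jordan) part of $P$ equals $\Im\zeta+\be\in\be+\O$, hence is bounded below by $\be$ in the cone order for \emph{every} $\zeta\in\cD$; exactly as $\|(A+iB)^{-1}\|\le\|A^{-1}\|$ for positive definite $A$ forces $\|(A+iB)^{-1}\|\le1$ when $A\succeq I$, this bounds $|P^{-1}|$, and thus all the normalized derivatives, uniformly in $\zeta$. Since near the base point $|h|=|w-i\be|\asymp d(i\be,w)$, and since for $\delta$ small the quotient $\Da(Q)/\Da(P)$ stays in a fixed neighbourhood of $1$ on which $x\mapsto x^{\nu+\frac nr}$ is Lipschitz, I obtain $\big|\tfrac{K(\zeta,z)}{K(\zeta,w)}-1\big|\le C_\delta\,d(z,w)$ for all small $\delta$.

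To reach an arbitrary $\delta>0$ I would chain along a Bergman geodesic: splitting the geodesic from $z$ to $w$ into points $z=z_0,\dots,z_N=w$ with $d(z_k,z_{k+1})\le\delta_0$ (the small radius above) and $\sum_k d(z_k,z_{k+1})=d(z,w)$, the quotient telescopes as $\frac{K(\zeta,z)}{K(\zeta,w)}=\prod_k\frac{K(\zeta,z_k)}{K(\zeta,z_{k+1})}$, each factor being $1+\e_k$ with $|\e_k|\le C_{\delta_0}d(z_k,z_{k+1})$, so that
$$\left|\frac{K(\zeta,z)}{K(\zeta,w)}-1\right|\le\prod_k(1+|\e_k|)-1\le e^{C_{\delta_0}d(z,w)}-1\le C_\delta\,d(z,w)$$
using $d(z,w)\le\delta$. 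The main obstacle is the uniform-in-$\zeta$ first-order estimate: everything hinges on the boundedness of $|P^{-1}|$ when the real part of $P$ dominates $\be$, which is precisely what the group normalization $\Im z=\be$ delivers, and which plays the role of the compactness argument available in the bounded-domain setting.
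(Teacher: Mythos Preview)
The paper does not give a proof of this lemma at all: it is simply quoted from \cite{BIN} (B\'ekoll\'e--Ishi--Nana, Theorem~1.1), so there is no ``paper's own proof'' to compare your argument against.

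Your outline is a reasonable route to the result and follows the standard pattern for Kor\'anyi-type lemmas: use the $G(\cD)$-invariance of both the quotient $K(\zeta,z)/K(\zeta,w)$ and the Bergman distance to normalize $z=i\be$; establish a uniform-in-$\zeta$ first-order bound on $\Da(P+i\bar h)/\Da(P)-1$ near $h=0$ by controlling the logarithmic derivatives of $\Da$ via the Jordan inverse $P^{-1}$, which stays bounded because $\Re P\succeq \be$; and then promote the small-$\delta$ estimate to arbitrary $\delta$ by telescoping along a geodesic. The one step that deserves more care is the passage from ``$\Re P\succeq \be$'' to a uniform bound on $|P^{-1}|$ in a general Euclidean Jordan algebra: your matrix analogy $\|(A+iB)^{-1}\|\le\|A^{-1}\|$ is correct in spirit, but in the Jordan setting one typically argues via the spectral decomposition in the complexified algebra or via the Peirce decomposition, and this is exactly the technical content that \cite{BIN} supplies. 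If you intend to make your sketch self-contained, that is the point to flesh out.
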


We close this subsection by recalling the following consequence of \cite[Corollary 3.4]{BBGNPR} and the above Kor\'anyi's lemma.
\begin{lem}\label{kor}
Let $\nu>\frac nr-1,\,\,\da>0$ and $z,w\in \mathcal D.$  %Define
%\begin{equation}\label{eq:normalrepkern} k_\nu(z,w)=\Da^{-\nu-\frac nr}\left(\frac{z-\bar w}{i}\right)\Da^{\frac{1}{2}(\nu+\frac nr)}(\im w).
%\end{equation}
There is a positive constant $C_\da$ such that for all $z\in B_\da(w),$
$$V_\nu(B_\da(w))|k_\nu(z,w)|^2\leq C_\da.$$

If $\da$ is sufficiently small, then there is $C>0$ such that for all $z\in B_\da(w),$
$$V_\nu(B_\da(w))|k_\nu(z,w)|^2\geq (1-C\da).$$
\end{lem}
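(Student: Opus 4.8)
The final statement to prove is Lemma \ref{kor}: for $\nu>\frac nr-1$, $\da>0$ and $z\in B_\da(w)$, there is $C_\da>0$ with $V_\nu(B_\da(w))|k_\nu(z,w)|^2\le C_\da$, and for $\da$ small enough a lower bound $V_\nu(B_\da(w))|k_\nu(z,w)|^2\ge 1-C\da$. The plan is to reduce both inequalities to the behavior of the reproducing kernel ratio $K_\nu(z,w)/K_\nu(w,w)$ on a Bergman ball, which is controlled by Kor\'anyi's Lemma \ref{lem:Koranyi}, combined with the volume estimate $V_\nu(B_\da(w))\asymp C_\da\Delta^{\nu+n/r}(\Im w)$ recorded just after Lemma \ref{lem:covering}.

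\textbf{Setting up the quantity.} First I would rewrite $|k_\nu(z,w)|^2$ in terms of the kernel. From the normalization \eqref{eq:normalrepkern} we have $k_\nu(z,w)=K_\nu(z,w)/\|K_\nu(\cdot,w)\|_{2,\nu}$, and since $\|K_\nu(\cdot,w)\|_{2,\nu}^2=K_\nu(w,w)$ by the reproducing property, it follows that
\[
|k_\nu(z,w)|^2=\frac{|K_\nu(z,w)|^2}{K_\nu(w,w)}.
\]
Next, using the volume estimate $V_\nu(B_\da(w))\asymp C_\da\,\Delta^{\nu+n/r}(\Im w)$ together with $K_\nu(w,w)=c_\nu\Delta^{-(\nu+\frac nr)}(\Im w)$ (read off from the explicit kernel $K_\nu(z,w)=c_\nu\Delta^{-(\nu+\frac nr)}((z-\bar w)/i)$ by putting $z=w$, so that $(w-\bar w)/i=2\,\Im w$), I would observe that the product $V_\nu(B_\da(w))K_\nu(w,w)$ is a constant depending only on $\da$ and $\nu$ (the powers of $\Delta(\Im w)$ cancel). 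Therefore
\[
V_\nu(B_\da(w))|k_\nu(z,w)|^2\asymp C_\da\,\frac{|K_\nu(z,w)|^2}{K_\nu(w,w)^2}
=C_\da\left|\frac{K_\nu(z,w)}{K_\nu(w,w)}\right|^2,
\]
which reduces the whole lemma to estimating the kernel ratio $K_\nu(z,w)/K_\nu(w,w)$ for $z\in B_\da(w)$.

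\textbf{Applying Kor\'anyi's lemma.} Now I would invoke Lemma \ref{lem:Koranyi} with the roles chosen so that the two arguments sitting in the second slot are $z$ and $w$: it gives $|K(w,z)/K(w,w)-1|\le C_\da d(z,w)\le C_\da\da$ for $z\in B_\da(w)$. By the Hermitian symmetry $K_\nu(z,w)=\overline{K_\nu(w,z)}$, this controls $|K_\nu(z,w)/K_\nu(w,w)-1|\le C_\da\da$ as well (note $K_\nu(w,w)>0$ is real). For the \textbf{upper bound}, this immediately yields $|K_\nu(z,w)/K_\nu(w,w)|\le 1+C_\da\da$, so the displayed product is $\lesssim C_\da(1+C_\da\da)^2\le C_\da'$, an absolute $\da$-dependent constant; this proves the first assertion for every $\da>0$. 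For the \textbf{lower bound}, the reverse triangle inequality gives $|K_\nu(z,w)/K_\nu(w,w)|\ge 1-C_\da\da$, hence
\[
V_\nu(B_\da(w))|k_\nu(z,w)|^2\ge c\,(1-C\da)^2\ge 1-C\da
\]
after absorbing constants, valid once $\da$ is small enough that $C_\da\da<1$; this is exactly the second assertion. Here I would use the cited \cite[Corollary 3.4]{BBGNPR} to guarantee that the implicit constant in $V_\nu(B_\da(w))K_\nu(w,w)\asymp C_\da$ tends to $1$ (or is normalized to $1$) as $\da\to 0$, which is what makes the clean constant $1$ appear on the right-hand side rather than a generic constant.

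\textbf{Main obstacle.} The only delicate point is the lower bound's sharp constant $1-C\da$ rather than merely $c_\da$. The upper bound and the qualitative lower bound $\gtrsim c_\da$ follow formally from Lemma \ref{lem:Koranyi} and the volume asymptotics. To get the constant normalized to $1$, I must track the comparison $V_\nu(B_\da(w))K_\nu(w,w)\to 1$ as $\da\to0$ carefully: this is where \cite[Corollary 3.4]{BBGNPR} enters, providing that the proportionality constant in $\int_{B_\da(w)}dV_\nu\asymp C_\da\Delta^{\nu+n/r}(\Im w)$ has the precise asymptotic form matching $c_\nu^{-1}$ up to $(1+O(\da))$. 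Combining that asymptotic with the Kor\'anyi bound $|K_\nu(z,w)/K_\nu(w,w)-1|\le C_\da\da$, and expanding $(1-C_\da\da)^2(1-C'\da)\ge 1-C''\da$ for small $\da$, produces the stated inequality. All remaining steps are routine substitutions of the explicit expressions for $K_\nu$ and $V_\nu$.
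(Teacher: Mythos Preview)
Your proposal is correct and follows precisely the approach the paper indicates: the paper does not give a detailed proof of this lemma but simply records it as ``a consequence of \cite[Corollary 3.4]{BBGNPR} and the above Kor\'anyi's lemma,'' and your argument is exactly the natural way to combine those two ingredients. Your identification of the one delicate point---that the sharp lower constant $1-C\da$ (rather than a generic $c_\da$) requires the volume asymptotic $V_\nu(B_\da(w))K_\nu(w,w)\to 1$ as $\da\to 0$, which is what \cite[Corollary 3.4]{BBGNPR} supplies---is on target.
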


The following was first proved  \cite[Lemma 5.1]{BT} in the case of homogeneous Siegel domains of type II.
\begin{lem}\label{lem:sumdeltafunctestim}
Let $0<\delta\leq 1$, $\alpha,\beta\in \mathbb{R}$ with $\alpha>2\frac nr-1$, $\beta>2\frac nr-1$ and $\alpha>\beta+\frac nr-1$. Then for any $\varepsilon>0$, there exists $A_\varepsilon>0$ such that if $\{z_j=x_j+iy_j\}$ is a sequence of points of $\mathcal{D}$ in a $\delta$-lattice satisfying $\inf_{j\neq k} d(z_j, z_k)\geq A_\varepsilon,$ then for any integer $j$, the following estimate holds
\begin{eqnarray}
\sum_{\{k: k\neq j\}}|\Delta^{-\alpha}(z_k-\bar z_j)|\Delta^\beta(y_k)\leq \varepsilon \Delta^{-\alpha+\beta}(y_j).
\end{eqnarray}
\end{lem}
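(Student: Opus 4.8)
The plan is to reduce the estimate to a geometric series over the separated sequence, exploiting the fact that a large separation constant $A_\varepsilon$ makes the sum of the "diagonal-free" tail as small as we like. First I would recall the basic comparison $|\Delta(z_k-\bar z_j)|\gtrsim \Delta(y_j)$ and, more precisely, that $|\Delta(z_k-\bar z_j)|\gtrsim \Delta(y_k+y_j)$ up to a constant depending only on the cone (this is standard for tube domains over symmetric cones; see \cite{FK,BBGNPR}). Combining this with the submultiplicativity/monotonicity of $\Delta$ on $\Omega$, one gets a bound of the shape
\[
|\Delta^{-\alpha}(z_k-\bar z_j)|\,\Delta^\beta(y_k)\ \lesssim\ \Delta^{-\alpha+\beta}(y_j)\cdot\Bigl(\tfrac{\Delta(y_k)}{\Delta(y_j)}\Bigr)^{\beta}\,\Bigl|\tfrac{\Delta(y_j)}{\Delta(z_k-\bar z_j)}\Bigr|^{\alpha-\beta}\cdot\Bigl|\tfrac{\Delta(y_j)}{\Delta(z_k-\bar z_j)}\Bigr|^{\beta},
\]
so that, after factoring out $\Delta^{-\alpha+\beta}(y_j)$, it suffices to show that the remaining sum over $k\neq j$ is $\le\varepsilon$ once the separation is large.

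Next I would dyadically organize the sum: for an integer $\ell\ge 0$ let $S_\ell(j)=\{k\neq j: \ell A_\varepsilon\le d(z_k,z_j)<(\ell+1)A_\varepsilon\}$. Since $\{z_j\}$ sits in a $\delta$-lattice with $\delta\le 1$, a standard volume/packing argument in the Bergman metric (using that balls $B_{\delta/2}(z_k)$ are pairwise disjoint, Lemma \ref{lem:covering}) shows $\#S_\ell(j)\lesssim (1+\ell A_\varepsilon)^{M}$ for some exponent $M$ depending only on the (real) dimension and $\delta$. On the other hand, for $k\in S_\ell(j)$ the quantity $|\Delta(y_j)/\Delta(z_k-\bar z_j)|$ decays like $e^{-c\,\ell A_\varepsilon}$ in the Bergman distance — this is where the Bergman-metric control of the kernel $K_\nu(z,w)=c_\nu\Delta^{-(\nu+n/r)}((z-\bar w)/i)$ enters, via the fact that $d(z,w)\asymp$ (a log of) $|K_\nu(z,w)|^2/(K_\nu(z,z)K_\nu(w,w))$ together with the estimates of \cite[Corollary 3.4]{BBGNPR}. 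Hence each term in $S_\ell(j)$ is $\lesssim e^{-c(\alpha-\beta)\ell A_\varepsilon}\,(\Delta(y_k)/\Delta(y_j))^{\beta}\,|\Delta(y_j)/\Delta(z_k-\bar z_j)|^{\beta}$; the last two factors together are bounded by a constant depending only on $\delta$ when $\alpha,\beta>2n/r-1$ and $\alpha>\beta+n/r-1$ (these are exactly the hypotheses that make $\Delta^{-\alpha}((\cdot+it)/i)$ and the weight $\Delta^{\beta}$ integrable, cf. Lemma \ref{lem:Apfunction}), so that summing over $k\in S_\ell(j)$ and then over $\ell$ yields
\[
\sum_{k\neq j}|\Delta^{-\alpha}(z_k-\bar z_j)|\,\Delta^\beta(y_k)\ \le\ C\,\Delta^{-\alpha+\beta}(y_j)\sum_{\ell\ge 1}(1+\ell A_\varepsilon)^{M}e^{-c(\alpha-\beta)\ell A_\varepsilon}.
\]
Since the series on the right tends to $0$ as $A_\varepsilon\to\infty$, we may choose $A_\varepsilon$ so that it is $\le\varepsilon/C$, which finishes the proof.

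The main obstacle is the second step: making precise the exponential decay of $|\Delta(y_j)/\Delta(z_k-\bar z_j)|$ in terms of the Bergman distance $d(z_j,z_k)$ while simultaneously controlling the polynomial factor $(\Delta(y_k)/\Delta(y_j))^{\beta}$ uniformly over the $\delta$-lattice. The point is that $y_j$ and $y_k$ can be very far apart in the cone, so the ratio $\Delta(y_k)/\Delta(y_j)$ is not bounded on its own; it must be absorbed against a positive power of $|\Delta(y_j)/\Delta(z_k-\bar z_j)|$, and getting the bookkeeping of exponents right — splitting the $\alpha-\beta$ "excess" power into a part that kills the $\Delta(y_k)$ growth (using $\alpha>\beta+n/r-1$) and a part that produces genuine decay in $\ell$ — is the delicate accounting. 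Once the hypotheses $\alpha,\beta>2n/r-1$ and $\alpha>\beta+n/r-1$ are used precisely at this juncture (they are the natural thresholds coming from Lemma \ref{lem:Apfunction} applied to $\Delta^{-\alpha}$ against the measure $\Delta^{\beta-n/r}\,dy$), the rest is the routine geometric-series summation sketched above.
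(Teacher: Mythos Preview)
Your dyadic-shell plan has two genuine gaps, and both already fail in the rank-one model (the upper half-plane).

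First, the packing bound $\#S_\ell(j)\lesssim (1+\ell A_\varepsilon)^M$ is false. The Bergman metric on $\mathcal D$ is a metric of noncompact type with \emph{exponential} volume growth; the invariant volume of a Bergman ball of radius $R$ grows like $e^{cR}$, so a $\delta$-lattice has on the order of $e^{c\ell A_\varepsilon}$ points in the shell $S_\ell(j)$, not polynomially many. Second, the claimed decay ``$|\Delta(y_j)/\Delta(z_k-\bar z_j)|\lesssim e^{-c\ell A_\varepsilon}$ for $k\in S_\ell(j)$'' is also false. In the upper half-plane take $z_j=i$ and $z_k=i\epsilon$: then $d(z_j,z_k)=|\log\epsilon|\to\infty$ while $y_j/|z_k-\bar z_j|=1/(1+\epsilon)\to 1$. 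What \emph{does} decay with the Bergman distance is the symmetric quantity $\Delta(y_j)\Delta(y_k)/|\Delta(z_k-\bar z_j)|^2$ (essentially the normalized kernel), but your term factors as
\[
\Bigl(\tfrac{\Delta(y_j)}{|\Delta(z_k-\bar z_j)|}\Bigr)^{\alpha-\beta}\Bigl(\tfrac{\Delta(y_k)}{|\Delta(z_k-\bar z_j)|}\Bigr)^{\beta},
\]
and the exponents $\alpha-\beta$ and $\beta$ are in general unequal, so you cannot pair them into powers of the symmetric quantity. Even if you could, the exponential shell cardinality would then have to be beaten by that decay, and in higher rank the precise rate is delicate --- this is exactly the ``bookkeeping'' you flag as the obstacle, and it does not close.

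The paper bypasses both issues in one stroke. Using the subharmonicity estimate (Lemma~\ref{lem:meanvalue}) on the holomorphic function $w\mapsto\Delta^{-\alpha}(w-\bar z_j)$, the sum over $k\neq j$ is dominated by the integral
\[
\int_{\{w:\,d(w,z_j)>A/2\}}|\Delta^{-\alpha}(w-\bar z_j)|\,\Delta^{\beta-2n/r}(\Im w)\,dV(w),
\]
since the disjoint balls $B_k'$ with $k\neq j$ sit in the complement of $B_{A/2}(z_j)$. Then the simply transitive action of $G(\mathcal D)$ sends $z_j$ to $i\mathbf e$ and, because $\Delta$ transforms by a character, pulls the factor $\Delta^{-\alpha+\beta}(y_j)$ out and reduces everything to the tail $\int_{\{d(\zeta,i\mathbf e)>A/2\}}|\Delta^{-\alpha}(\zeta+i\mathbf e)|\,\Delta^{\beta-2n/r}(\Im\zeta)\,dV(\zeta)$ of a \emph{single fixed} integral. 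The hypotheses $\beta>2\tfrac nr-1$ and $\alpha>\beta+\tfrac nr-1$ are exactly the convergence conditions for this integral from Lemma~\ref{lem:Apfunction}, so its tail over $\{d(\zeta,i\mathbf e)>A/2\}$ can be made $\le\varepsilon$ by choosing $A=A_\varepsilon$ large. No shell counting, no kernel-versus-distance estimates are needed.
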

\begin{proof}
Let us give ourself $A>0$. Thanks to Remark \ref{rem:separated}, we may assume that  the sequence $\{z_j=x_j+iy_j\}$ is such that $d(z_j, z_k)\ge A$ for all $j\neq k.$  We first observe with Lemma \ref{lem:meanvalue} that
$$|\Delta^{-\alpha}(z_k-\bar z_j)|\leq (\delta/3)^{-n}\int_{B'_k}|\Delta^{-\alpha}(w-\bar z_j)|\frac{dV(w)}{\Delta^{\frac{2n}{r}}(\Im w)}.$$
It follows that %$\Delta_\beta(v)\sim \Delta_\beta(y_k)$ on $B'_k,$
\Beas
S &:=& \sum_{\{k: k\neq j\}}|\Delta^{-\alpha}(z_k-\bar z_j)|\Delta^\beta(y_k)\\ &\leq& C(\delta/3)^{-n}\sum_{\{k: k\neq j\}}\int_{B'_k}|\Delta^{-\alpha}(w-\bar z_j)|\Delta^\beta(\Im w)\frac{dV(w)}{\Delta^{\frac{2n}{r}}(\Im w)}\\
&\leq&C(\delta/3)^{-n}\int_{\mathcal{B}}|\Delta^{-\alpha}(w-\bar z_j)|\Delta^\beta(\Im w)\frac{dV(w)}{\Delta^{\frac{2n}{r}}(\Im w)}
\Eeas
where $\mathcal{B}:=\bigcup_{k\neq j}B'_k$. 
\vskip .2cm
Now observe that if $w\in \mathcal{B}$, then $w\in B_k'$ for some $k$ and so $$d(w, z_k)<\frac{\delta}{2}<\frac{A}{2}$$ and for $j\neq k$, $$d(w,z_j)\geq d(z_j, z_k)-d(w, z_k)>A-\frac{A}{2}=\frac{A}{2}.$$ Let $g\in G(\mathcal{D})$ be the transformation such that $g(i{\bf e})=z_j$, and put $w=g(\zeta)$. Observe that for any $s\in \mathbb{R}$, $$\Delta^{s}(w-\bar z_j)=\Delta^{s}(g(\zeta+i{\bf e}))=(Det g)^{\frac{r}{n}s}\Delta^{s}(\zeta+i{\bf e})=\Delta^s(\Im z_j)\Delta^{s}(\zeta+i{\bf e}),$$ $$\Delta^s(\Im w)=(Det g)^{\frac{r}{n}s}\Delta^s(\Im \zeta)=\Delta^s(\Im z_j)\Delta^s(\Im \zeta)$$ and $$dV(w)=(Det g)^2dV(\zeta)=\Delta^{2\frac nr}(\Im z_j)dV(\zeta).$$ It follows that
\Beas
S &\leq& C(\delta/3)^{-n}\int_{d(z_j, w)>A/2}|\Delta^{-\alpha}(w-\bar z_j)|\Delta^\beta(\Im w)\frac{dV(w)}{\Delta^{\frac{2n}{r}}(\Im w)}\\
&\leq& C(\delta/3)^{-n}\Delta^{-\alpha+\beta}(y_j)\int_{d(ie, \zeta)>A/2}|\Delta^{-\alpha}(\zeta+ie)|\Delta^\beta(\Im \zeta)\frac{dV(\zeta)}{\Delta^{\frac{2n}{r}}(\Im \zeta)}.
\Eeas

From the assumptions on $\alpha$ and $\beta$ together with Lemma \ref{lem:Apfunction}, one has that the integral
$$\int_{T_\Omega}|\Delta^{-\alpha}(\zeta+ie)|\Delta^\beta(\Im \zeta)\frac{dV(\zeta)}{\Delta^{\frac{2n}{r}}(\zeta)}$$
converges. Hence, there exists $A_\varepsilon>0$ such that for all $A\ge A_\varepsilon,$ the following inequality holds
$$\int_{d(ie, \zeta)>A/2}|\Delta^{-\alpha}(\zeta+ie)|\Delta^\beta(\Im \zeta)\frac{dV(\zeta)}{\Delta^{\frac{2n}{r}}(\Im \zeta)}\leq \frac{\varepsilon}{C(\delta/3)^{-n}}$$
with $C$ as in the above estimate of $S$. The proof is complete.

\end{proof}

%We finish this subsection by recalling that there is a constant $C>0$ such that for any $f\in A_\nu^p(\mathcal D)$ the following
%pointwise estimate holds:
%\begin{equation}\label{eq:pointwiseestimberg}|f(z)|\le C\Delta^{-\frac{1}{p}(\nu+\frac{n}{r})}(\Im z)\|f\|_{A_\nu^p},\,\,\,\textrm{for all}\,\,\, z\in \mathcal {D}
%\end{equation}
%(see \cite[Proposition 3.5]{BBGNPR}).
\subsection{Averaging functions and Berezin transform}
The following was proved in \cite{NS} for $1\le p\le \infty$. A careful observation of the proof of \cite[Lemma 2.9]{NS} shows that the result extends to $0<p<1$.
\begin{lem}\label{lem:variationoflattice}
Let $0< p\le \infty$, $\nu\in \mathbb R$, and $\delta,\beta\in (0,1)$. Let $\mu$ be a positive Borel measure on $\mathcal D$. Then
the following assertions are equivalent.
\begin{itemize}
\item[(i)] The function  $\mathcal {D}\ni z\mapsto \frac{\mu(B_\delta(z))}{\Delta^{\nu+\frac{n}{r}}(\Im z)}$ belongs to $L_\nu^p(\mathcal D)$.
\item[(ii)] The function  $\mathcal {D}\ni z\mapsto \frac{\mu(B_\beta(z))}{\Delta^{\nu+\frac{n}{r}}(\Im z)}$ belongs to $L_\nu^p(\mathcal D)$.
\end{itemize}
\end{lem}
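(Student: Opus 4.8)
\textbf{Proof proposal for Lemma \ref{lem:variationoflattice}.}

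The plan is to prove this by a symmetric covering argument that reduces both directions to a single inequality: for any $\delta,\beta\in(0,1)$ there is a constant $C=C(\delta,\beta,\mathcal D)$ such that
\[
\frac{\mu(B_\beta(z))}{\Delta^{\nu+\frac nr}(\Im z)}\le C\int_{B_{\beta+\delta}(z)}\frac{\mu(B_\delta(w))}{\Delta^{\nu+\frac nr}(\Im w)}\,d\lambda(w),
\]
and then to integrate this pointwise bound against $dV_\nu$. Since the roles of $\delta$ and $\beta$ are interchangeable, establishing this for one ordering suffices to get both implications (i)$\Leftrightarrow$(ii); the case $p=\infty$ is an even simpler pointwise comparison so I would focus on $0<p<\infty$ and in particular note that the argument of \cite[Lemma 2.9]{NS}, written for $1\le p<\infty$, never actually uses $p\ge1$, so it is only a matter of checking that no step degrades when $p<1$.

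First I would fix a $\frac\delta2$-lattice $\{\zeta_j\}$ as in Lemma \ref{lem:covering}, so the balls $B_{\delta/4}(\zeta_j)$ are disjoint, the balls $B_{\delta/2}(\zeta_j)$ cover $\mathcal D$ with bounded overlap $N$. For a given $z$, the ball $B_\beta(z)$ is covered by those $B_{\delta/2}(\zeta_j)$ that meet it; call this index set $J(z)$. By the triangle inequality each such $\zeta_j$ satisfies $d(z,\zeta_j)<\beta+\frac\delta2$, and $\#J(z)$ is bounded by a constant depending only on $\beta,\delta,\mathcal D$ (a volume-comparison / bounded-overlap count). Then
\[
\mu(B_\beta(z))\le \sum_{j\in J(z)}\mu(B_{\delta/2}(\zeta_j))\le \sum_{j\in J(z)}\mu(B_\delta(w))
\]
for any $w\in B_{\delta/4}(\zeta_j)$, since $B_{\delta/2}(\zeta_j)\subset B_\delta(w)$ when $d(w,\zeta_j)<\frac\delta4$. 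Averaging this last inequality over $w\in B_{\delta/4}(\zeta_j)$ against $d\lambda$ and using $\lambda(B_{\delta/4}(\zeta_j))\asymp c_\delta$ together with the standard equivalences $\Delta^{\nu+\frac nr}(\Im\zeta_j)\asymp\Delta^{\nu+\frac nr}(\Im w)$ for $w\in B_{\delta/4}(\zeta_j)$ (a consequence of Kor\'anyi's lemma, cf. Lemma \ref{lem:Koranyi} and \cite[Corollary 3.4]{BBGNPR}), and the inclusion $B_{\delta/4}(\zeta_j)\subset B_{\beta+\delta}(z)$ for $j\in J(z)$, gives the displayed pointwise bound with $F(w):=\mu(B_\delta(w))/\Delta^{\nu+\frac nr}(\Im w)$.

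Now raise the pointwise inequality to the power $p$. For $0<p<1$ I would use the elementary inequality $(\sum a_i)^p\le\sum a_i^p$ in place of H\"older; more precisely, writing $\mu(B_\beta(z))/\Delta^{\nu+\frac nr}(\Im z)\le C\sum_{j\in J(z)}F_j$ with $F_j$ the average of $F$ over $B_{\delta/4}(\zeta_j)$ and $\#J(z)\le M$, we get $\big(\mu(B_\beta(z))/\Delta^{\nu+\frac nr}(\Im z)\big)^p\le C^p\sum_{j\in J(z)}F_j^p$ (for $p\ge1$ one absorbs the $M^{p-1}$ factor instead, so the statement holds for all $0<p\le\infty$). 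Integrating in $z$ against $dV_\nu(z)=\Delta^{\nu-\frac nr}(\Im z)\,dx\,dy$ and swapping the order of summation and integration, the double sum collapses: a point $z$ contributes $F_j^p$ only when $B_{\delta/2}(\zeta_j)$ meets $B_\beta(z)$, i.e.\ when $z\in B_{\beta+\delta/2}(\zeta_j)$, and $\int_{B_{\beta+\delta/2}(\zeta_j)}dV_\nu(z)\asymp C_{\beta,\delta}\Delta^{\nu+\frac nr}(\Im\zeta_j)$. Using once more $\Delta^{\nu+\frac nr}(\Im\zeta_j)\asymp\Delta^{\nu+\frac nr}(\Im w)$ on $B_{\delta/4}(\zeta_j)$ and $F_j^p\le$ (average of $F^p$) — here for $p<1$ one uses $F_j^p=(\text{avg }F)^p$ and Jensen the other way, so it is cleaner to instead bound $F_j$ itself and only take the $p$-th power at the very end, estimating $F_j\asymp \Delta^{-(\nu+\frac nr)}(\Im\zeta_j)\int_{B_{\delta/4}(\zeta_j)}F\,d\lambda$ and then $F_j^p\lesssim \Delta^{-p(\nu+\frac nr)}(\Im\zeta_j)\int_{B_{\delta/4}(\zeta_j)}F^p\,d\lambda\cdot\lambda(B_{\delta/4}(\zeta_j))^{p-1}$ via H\"older when $p\ge1$, and via $\int F\le (\int F^p)^{1/p}\lambda(\cdot)^{1-1/p}$ when... — in any case one arrives at
\[
\int_{\mathcal D}\Big(\frac{\mu(B_\beta(z))}{\Delta^{\nu+\frac nr}(\Im z)}\Big)^p dV_\nu(z)\;\lesssim\;\sum_j\int_{B_{\delta/4}(\zeta_j)}\Big(\frac{\mu(B_\delta(w))}{\Delta^{\nu+\frac nr}(\Im w)}\Big)^p dV_\nu(w)\;\lesssim\;\int_{\mathcal D}\Big(\frac{\mu(B_\delta(w))}{\Delta^{\nu+\frac nr}(\Im w)}\Big)^p dV_\nu(w),
\]
the last step by the bounded overlap of the lattice balls. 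This proves (i)$\Rightarrow$(ii), and the reverse follows by exchanging $\delta\leftrightarrow\beta$. The main obstacle — and the only place where $p<1$ requires genuine care rather than a verbatim copy of \cite{NS} — is handling the finite sum $\sum_{j\in J(z)}$ with the correct inequality: one must avoid any step that implicitly uses $(\sum a_i)^p\ge c\sum a_i^p$ or a H\"older duality valid only for $p\ge1$, and instead exploit $\#J(z)\le M$ together with the elementary convexity/concavity of $t\mapsto t^p$ in the direction that is actually available. Once the bookkeeping of constants ($M$, $N$, $c_\delta$, and the $\Delta$-comparisons) is organized with this in mind, no new ingredient beyond those already recorded in Section 2 is needed.
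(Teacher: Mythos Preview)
Your overall strategy is correct and is precisely what the paper has in mind: the paper gives no explicit proof here, merely citing \cite[Lemma 2.9]{NS} for $p\ge1$ and remarking that ``a careful observation'' of that proof shows it extends to $0<p<1$. Your covering/lattice argument is the natural way to make that observation concrete, and your identification of the finite-index bound $\#J(z)\le M$ together with the subadditivity $(\sum a_i)^p\le\sum a_i^p$ for $p<1$ as the replacement for H\"older is exactly the point.

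There is, however, one real wobble in your write-up, and it occurs precisely at the step that matters for $p<1$. After reaching
\[
\int_{\mathcal D}\Big(\frac{\mu(B_\beta(z))}{\Delta^{\nu+\frac nr}(\Im z)}\Big)^p dV_\nu(z)\;\lesssim\;\sum_j F_j^{\,p}\,\Delta^{\nu+\frac nr}(\Im\zeta_j),
\]
with $F_j$ the $d\lambda$-average of $F(w)=\mu(B_\delta(w))/\Delta^{\nu+\frac nr}(\Im w)$ over $B_{\delta/4}(\zeta_j)$, you try to pass from $F_j^{\,p}$ to the average of $F^p$. For $p<1$ Jensen gives $(\mathrm{avg}\,F)^p\ge\mathrm{avg}(F^p)$, which is the wrong direction; you notice this, start a H\"older-type fix, and trail off without closing the argument. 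The remedy is not to average and then take powers, but to take powers first: you already have the \emph{pointwise} inequality $\mu(B_{\delta/2}(\zeta_j))\le\mu(B_\delta(w))$ for every $w\in B_{\delta/4}(\zeta_j)$, hence
\[
\Big(\frac{\mu(B_{\delta/2}(\zeta_j))}{\Delta^{\nu+\frac nr}(\Im\zeta_j)}\Big)^p\;\lesssim\;F(w)^p\qquad\text{for all }w\in B_{\delta/4}(\zeta_j),
\]
and \emph{then} integrating in $w$ over $B_{\delta/4}(\zeta_j)$ against $dV_\nu$ (using $\Delta^{\nu+\frac nr}(\Im\zeta_j)\asymp\Delta^{\nu+\frac nr}(\Im w)$ there) gives
\[
\Big(\frac{\mu(B_{\delta/2}(\zeta_j))}{\Delta^{\nu+\frac nr}(\Im\zeta_j)}\Big)^p\Delta^{\nu+\frac nr}(\Im\zeta_j)\;\lesssim\;\int_{B_{\delta/4}(\zeta_j)}F(w)^p\,dV_\nu(w),
\]
valid for every $p>0$ with no Jensen or H\"older needed. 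Summing in $j$ and using disjointness of the $B_{\delta/4}(\zeta_j)$ then yields your final display. In short: work with $\mu(B_{\delta/2}(\zeta_j))/\Delta^{\nu+\frac nr}(\Im\zeta_j)$ directly rather than with the averaged quantity $F_j$, so that the $p$-th power is applied to a pointwise bound rather than to an integral average.
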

Note that the above lemma allows flexibility on the choice of the radius of the ball. This fact is quite useful as seen in \cite{NS}.
\vskip .2cm
We have the following result.
\begin{lem}\label{lem:integraldiscretizationAverBer}
Let $0< p\le 1$, $\nu>\frac{n}{r}-1$, $\beta, \delta\in (0,1)$. Let $\{\zeta_j\}_{j\in \mathbb N}$ be a $\delta$-lattice in $\mathcal D$, and let $\hat {\mu}_\beta$ and $\tilde {\mu}$ be in this order, the average function and the Berezin transform associated to the weight $\nu$ . Then the following assertions are equivalent.
\begin{itemize}
\item[(i)] $\hat {\mu}_\beta\in L^p(\mathcal D,d\lambda)$.
\item[(ii)] $\{\hat {\mu}_\delta(\zeta_j)\}_{j\in \mathbb N}\in l^p$.
\end{itemize}
If moreover, $p>\frac{2\frac nr-1}{\nu+\frac nr}$, then the above assertions are equivalent to
\begin{itemize}
\item[(iii)] $\tilde {\mu}\in L^p(\mathcal D, d\lambda)$.
\end{itemize}
\end{lem}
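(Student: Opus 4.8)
The plan is to prove the three-way equivalence (i)$\Leftrightarrow$(ii)$\Leftrightarrow$(iii) for $0<p\le 1$ first, and then add the fourth characterization (iii)$\Leftrightarrow$(iv) under the extra hypothesis $p>\frac{2\frac nr-1}{\nu+\frac nr}$.

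\textbf{Step 1: (i)$\Leftrightarrow$(ii).} Recall that $d\lambda(z)=\Delta^{-2n/r}(\Im z)\,dV(z)$ is the invariant measure, while $dV_\nu(z)=\Delta^{\nu-n/r}(\Im z)\,dV(z)$; hence $d\lambda = \Delta^{-\nu-n/r}(\Im z)\,dV_\nu(z)$. Therefore
$$\int_{\mathcal D}\hat\mu_\beta(z)^p\,d\lambda(z)=\int_{\mathcal D}\left(\frac{\mu(B_\beta(z))}{V_\nu(B_\beta(z))}\right)^p\Delta^{-\nu-n/r}(\Im z)\,dV_\nu(z)\asymp \int_{\mathcal D}\left(\frac{\mu(B_\beta(z))}{\Delta^{\nu+n/r}(\Im z)}\right)^p\,dV_\nu(z),$$
using $V_\nu(B_\beta(z))\asymp C_\beta\Delta^{\nu+n/r}(\Im z)$. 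So (iii) is exactly the statement that $z\mapsto \frac{\mu(B_\beta(z))}{\Delta^{\nu+n/r}(\Im z)}$ lies in $L^p_\nu(\mathcal D)$. By Lemma \ref{lem:variationoflattice} this is independent of the choice of $\beta\in(0,1)$, so without loss of generality $\beta=\delta$. Now I would cover $\mathcal D$ by the lattice balls $B_j=B_\delta(\zeta_j)$ and use that $z\in B_j$ implies $B_{\delta/2}(\zeta_j)\subset B_{2\delta}(z)$, hence $\mu(B_{\delta/2}(\zeta_j))\le \mu(B_{2\delta}(z))$ and $\Delta(\Im\zeta_j)\asymp\Delta(\Im z)$; summing the local estimates with the finite-overlap property of Lemma \ref{lem:covering} gives (in one direction) control of the integral by $\sum_j\hat\mu_{2\delta}(\zeta_j)^p$, and for the converse one integrates the average over each $B_j$ and sums. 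Swapping radii via Lemma \ref{lem:variationoflattice} again disposes of the mismatch between $\delta$, $\delta/2$, $2\delta$. This yields (ii)$\Leftrightarrow$(iii).

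\textbf{Step 2: adding (iv).} Here $\tilde\mu(w)=\int_{\mathcal D}|k_\nu(z,w)|^2\,d\mu(z)$. For the easy direction (iv)$\Rightarrow$(iii): by Lemma \ref{kor}, for $z\in B_\beta(w)$ one has $|k_\nu(z,w)|^2\gtrsim \Delta^{-\nu-n/r}(\Im w)$, so $\tilde\mu(w)\gtrsim \frac{\mu(B_\beta(w))}{\Delta^{\nu+n/r}(\Im w)}=\hat\mu_\beta(w)$ pointwise, and $L^p(d\lambda)$-membership of the larger function forces it for the smaller. The reverse direction (iii)$\Rightarrow$(iv) is where the cut-off $p>\frac{2\frac nr-1}{\nu+\frac nr}$ enters. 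I would cover $\mathcal D$ by a $\delta$-lattice, write $\tilde\mu(w)=\sum_j\int_{B_j}|k_\nu(z,w)|^2\,d\mu(z)$, estimate $|k_\nu(z,w)|^2\asymp \left|\Delta^{-\nu-n/r}\!\left(\tfrac{z-\bar w}{i}\right)\right|^2\Delta^{\nu+n/r}(\Im w)$ (constant up to $d(z,\zeta_j)<\delta$ on each ball, by Kor\'anyi's Lemma \ref{lem:Koranyi}), so $\tilde\mu(w)\lesssim \sum_j \mu(B_j)\,\big|\Delta^{-\nu-n/r}\big(\tfrac{\zeta_j-\bar w}{i}\big)\big|\,\Delta^{\nu+n/r}(\Im w)$. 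Since $0<p\le 1$, I can apply the elementary inequality $(\sum a_j)^p\le\sum a_j^p$, integrate against $d\lambda$, and on each term make the $G(\mathcal D)$-change of variables sending $\zeta_j\mapsto i\mathbf e$ (as in the proof of Lemma \ref{lem:sumdeltafunctestim}) to reduce to the single integral $\int_{\mathcal D}\big|\Delta^{-p(\nu+n/r)}(\zeta+i\mathbf e)\big|\,\Delta^{p(\nu+n/r)}(\Im\zeta)\,d\lambda(\zeta)$, which converges precisely when Lemma \ref{lem:Apfunction} applies, i.e. when $p(\nu+n/r)>2\frac nr-1$ (together with $p(\nu+n/r)>\frac nr-1$, automatic). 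This produces $\int\tilde\mu^p\,d\lambda\lesssim\sum_j\hat\mu_\delta(\zeta_j)^p=\|\{\hat\mu_\delta(\zeta_j)\}\|_{l^p}^p$, i.e. (ii)$\Rightarrow$(iv).

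\textbf{Main obstacle.} The routine part is Step 1; the genuinely delicate point is the convergence bookkeeping in (iii)$\Rightarrow$(iv), where one must (a) use $p\le 1$ to linearize the sum before integrating, (b) carry out the invariance change of variables uniformly in $j$, and (c) verify that the resulting single integral sits exactly in the range covered by Lemma \ref{lem:Apfunction}, which is precisely what pins down the threshold $p>\frac{2\frac nr-1}{\nu+\frac nr}$ and shows it is sharp. One should also double-check that the finite-overlap constant $N$ from Lemma \ref{lem:covering} does not accumulate when passing between radii, which is handled cleanly by invoking Lemma \ref{lem:variationoflattice} rather than by direct estimates.
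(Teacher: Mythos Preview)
Your approach is essentially the one in the paper: (i)$\Leftrightarrow$(ii) via the lattice covering and Lemma~\ref{lem:variationoflattice} (the paper simply cites \cite[Lemma~2.12]{NS}), the easy implication $\tilde\mu\gtrsim\hat\mu_\beta$ pointwise via Lemma~\ref{kor}, and the hard implication by discretizing $\tilde\mu$ over the lattice, using $0<p\le 1$ to linearize, and integrating each term with Lemma~\ref{lem:Apfunction}.

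Two points to fix. First, your item labels drift: the lemma has only (i), (ii), (iii), but you repeatedly write ``(iii)'' where you mean (i) and introduce a nonexistent ``(iv)'' for the Berezin condition; this is cosmetic but should be cleaned up. Second, and more substantively, you dropped the square on the kernel. From $|k_\nu(z,w)|^2\asymp\big|\Delta^{-\nu-\frac nr}\big(\tfrac{z-\bar w}{i}\big)\big|^{2}\Delta^{\nu+\frac nr}(\Im w)$ one gets
\[
\tilde\mu(w)\lesssim \sum_j \mu(B_j)\,\Big|\Delta^{-\nu-\frac nr}\Big(\tfrac{\zeta_j-\bar w}{i}\Big)\Big|^{2}\,\Delta^{\nu+\frac nr}(\Im w),
\]
and after raising to the $p$th power and integrating, the relevant integral is
\[
\int_{\mathcal D}\Big|\Delta^{-(\nu+\frac nr)}\Big(\tfrac{\zeta+i\mathbf e}{1}\Big)\Big|^{2p}\Delta^{p(\nu+\frac nr)-2\frac nr}(\Im\zeta)\,dV(\zeta),
\]
not the one you wrote. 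With your exponent (no square) Lemma~\ref{lem:Apfunction} actually fails: the condition $\alpha>\nu'+2\frac nr-1$ becomes $0>\frac nr-1$, which is false. With the correct exponent $2p(\nu+\frac nr)$ the two conditions of Lemma~\ref{lem:Apfunction} reduce exactly to $p(\nu+\frac nr)>2\frac nr-1$, matching the threshold. Once this is corrected your argument coincides with the paper's.
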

\begin{proof}
The equivalence (i)$\Leftrightarrow$(ii) follows as in \cite[Lemma 2.12]{NS}. That (iii)$\Rightarrow$(i) follows from the fact that for any $\delta\in (0,1)$, there exists a onstant $C_\delta>0$ such that for any $z\in \mathcal{D}$, $$\hat {\mu}_\beta(z)\le C_\delta\tilde {\mu}(z)$$
(see \cite[Lemma 2.8]{NS}).
To finish the proof, let us prove that (ii)$\Rightarrow$(iii). 
First using Lemma \ref{lem:Koranyi}, we obtain
\Beas
\tilde {\mu}(z) &:=& \int_{\mathcal{D}}|K_\nu(z,w)|^2\Delta^{\nu+\frac nr}(z)d\mu(w)\\ &\le& \sum_{k}\int_{B_k}|K_\nu(z,w)|^2\Delta^{\nu+\frac nr}(z)d\mu(w)\\ &\le& C\sum_{k}|K_\nu(z,\zeta_k)|^2\Delta^{\nu+\frac nr}(z)\mu(B_k)\\ &\le& C\sum_{k}|K_\nu(z,\zeta_k)|^2\Delta^{\nu+\frac nr}(z)\Delta^{\nu+\frac nr}(\zeta_k)\hat{\mu}_\delta(\zeta_k).
\Eeas
As $0<p\le 1$, it follows that
$$\left(\tilde {\mu}(z)\right)^p\le C\sum_{k}|K_\nu(z,\zeta_k)|^{2p}\Delta^{p(\nu+\frac nr)}(z)\Delta^{p(\nu+\frac nr)}(\zeta_k)\left(\hat{\mu}_\delta(\zeta_k)\right)^p.$$
Hence using that $p>\frac{2\frac nr-1}{\nu+\frac nr}$ together with Lemma \ref{lem:Apfunction}, we obtain
\Beas
L &:=& \int_{\mathcal{D}}\left(\tilde {\mu}(z)\right)^pd\lambda(z)\\ &\le& C\sum_{k}\Delta^{p(\nu+\frac nr)}(\zeta_k)\left(\hat{\mu}_\delta(\zeta_k)\right)^p\int_{\mathcal{D}}|K_\nu(z,\zeta_k)|^{2p}\Delta^{p(\nu+\frac nr)-2\frac nr}(z)dV(z)\\ &\le& C\sum_{k}\left(\hat{\mu}_\delta(\zeta_k)\right)^p<\infty.
\Eeas
The proof is complete.
\end{proof}

\subsection{Schatten class operators}
In this subsection,  $\mathcal{H}$ is a Hilbert
space with associated norm $\|\cdot\|$. The
spaces of bounded and compact linear operators on
$\mathcal{H}$ are denoted $\mathcal{B}(\mathcal{H})$ and
$\mathcal{K}(\mathcal{H})$ respectively. We recall that any positive operator $T\in
\mathcal{K}(\mathcal{H})$, then one can find an orthonormal set
$\{e_j\}$ of $\mathcal{H}$ and a sequence $\{\lambda_j\}$
that decreases to $0$ such that
\begin{equation}
Tf=\sum_{j=0}^{\infty}\lambda_j\langle f,e_j\rangle e_j,\,\,\,\,\,\,\,\,\,\,
f\in \mathcal{H} .\end{equation}

For $0< p <\infty$, we say a
compact operator $T$ with a decomposition as above belongs to
the Schatten-Von Neumann p-class
$\mathcal{S}_p:=\mathcal{S}_p(\mathcal{H})$, if 

$$||T||_{\mathcal{S}_p}:=(\sum_{j=0}^{\infty}|\lambda_j|^p)^{\frac{1}{p}}<\infty.$$

For $p=1$, we denote by $\mathcal{S}_1=\mathcal{S}_1(\mathcal{H})$ 
the trace class. We recall that for $T\in \mathcal{S}_1$, the trace of
$T$ is defined by $$Tr(T)=\sum_{j=0}^{\infty}\langle Te_j,e_j\rangle$$
where $\{e_j\}$ is any orthonormal basis of the Hilbert
space $\mathcal{H}$. 

It is known that a compact operator T on $\mathcal H$ belongs
to the Schatten class $\mathcal {S}_p$ if and only if the positive operator $ (T^*T)^{1/2}$ belongs
to $\mathcal {S}_p$, where $T^*$ denotes the adjoint of $T$. In this case, we have $||T||_{\mathcal{S}_p}=||(T^*T)^{1/2}||_{\mathcal{S}_p}$. It is also well known that a positive $T$ belongs to $\mathcal {S}_p$ if and only if the operator $T^p$ belongs to the trace class $\mathcal {S}_1$. In this case, $||T||_{\mathcal{S}_p}=||T^p||_{\mathcal{S}_1}$.
\vskip .2cm
We also recall that if $T$ is a compact operator on $\mathcal H$, and $p\ge 1$, then that $T\in \mathcal S_p$ is equivalent to
$$\sum_{j}|\langle Te_j,e_j\rangle|^p<\infty$$ for any orthonormal set $\{e_j\}$ in $\mathcal H$ (see \cite{Zhu1}).
\vskip .2cm

The following can be found in \cite{Zhu1}.
\begin{lem}\label{lem:schattensmallexpo}
Suppose that $T$ is a positive operator on $\mathcal{H}$, and that $\{e_j\}$ is an orthonormal basis on $\mathcal{H}$. Then if $0<p<1$ and $$\sum_{j=1}^\infty \langle Te_j,e_j\rangle^p<\infty,$$
then $T$ belongs to $\mathcal {S}_p$.
\end{lem}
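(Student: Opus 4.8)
It suffices to prove that $T^p\in\mathcal{S}_1$, the trace class, where $T^p$ is defined through the functional calculus; indeed, by the fact recalled above, a positive operator belongs to $\mathcal{S}_p$ if and only if $T^p\in\mathcal{S}_1$. Since $T^p$ is positive, the quantity $\sum_j\langle T^pe_j,e_j\rangle=\sum_j\|T^{p/2}e_j\|^2$ is well defined in $[0,\infty]$, and if it is finite then $T^{p/2}$ is Hilbert--Schmidt, so that $T^p=(T^{p/2})^2\in\mathcal{S}_1$. Thus the whole matter reduces to the inequality
\[
\sum_j\langle T^pe_j,e_j\rangle\ \le\ \sum_j\langle Te_j,e_j\rangle^p,
\]
whose right-hand side is finite by hypothesis.

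The plan is to obtain this inequality term by term from the concavity of $t\mapsto t^p$ (valid since $0<p<1$). Fix $j$. By the spectral theorem applied to the bounded positive operator $T$, there is a Borel probability measure $\mu_j$ on $[0,\|T\|]$, the scalar spectral measure of $T$ at the unit vector $e_j$, such that $\langle g(T)e_j,e_j\rangle=\int_0^{\|T\|}g(t)\,d\mu_j(t)$ for every continuous $g$ on $[0,\|T\|]$. Applying this with $g(t)=t$ and with $g(t)=t^p$, and using Jensen's inequality for the probability measure $\mu_j$ and the concave function $t\mapsto t^p$, I get
\[
\langle T^pe_j,e_j\rangle=\int_0^{\|T\|}t^p\,d\mu_j(t)\ \le\ \Bigl(\int_0^{\|T\|}t\,d\mu_j(t)\Bigr)^p=\langle Te_j,e_j\rangle^p .
\]
Summing over $j$ yields the displayed inequality, hence $T^p\in\mathcal{S}_1$, and therefore $T\in\mathcal{S}_p$ with moreover $\|T\|_{\mathcal{S}_p}^p=\|T^p\|_{\mathcal{S}_1}\le\sum_j\langle Te_j,e_j\rangle^p$.

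I do not expect a genuine difficulty here: the only step requiring some care is the operator-theoretic input, namely that $g\mapsto\langle g(T)e_j,e_j\rangle$ is represented by integration against a probability measure on the spectrum of $T$, which makes Jensen's inequality applicable; everything else is routine. As an aside, the hypothesis already forces $T$ itself to be trace class --- from $\sum_j\langle Te_j,e_j\rangle^p<\infty$ with $0<p<1$ one gets $\langle Te_j,e_j\rangle\to0$, hence $\langle Te_j,e_j\rangle\le\langle Te_j,e_j\rangle^p$ for all large $j$, so that $\sum_j\langle Te_j,e_j\rangle<\infty$ --- and one could instead run the same concavity estimate on the eigenvalue sequence of the now compact operator $T$; but working directly with $T^p$ as above is shorter and avoids invoking compactness as a separate step.
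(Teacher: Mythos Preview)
Your proof is correct. The paper does not actually supply its own proof of this lemma; it merely cites Zhu's book \cite{Zhu1}, so there is no in-paper argument to compare against. The key pointwise inequality you derive, $\langle T^pg,g\rangle\le\langle Tg,g\rangle^p$ for unit vectors $g$ and $0<p\le1$, is in fact invoked verbatim by the paper later (in the proof of Proposition~\ref{prop:boxberezin}), again with a bare citation to \cite{Zhu1}; your spectral-measure-plus-Jensen argument gives a clean self-contained justification of it. The aside establishing that $T$ is already trace class is correct but, as you note, unnecessary for the main line.
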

We also observe the following (see \cite{Zhu2})
\begin{lem}\label{lem:schattenviaorthoper}
Let $T$ be any bounded operator on $\mathcal{H}$ and assume that $A$ is bounded surjective operator on $\mathcal{H}$. Then $T$ belongs to $\mathcal {S}_p$ if and only if the operator $A^*TA$ belongs to $\mathcal {S}_p$.
\end{lem}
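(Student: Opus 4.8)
The plan is to reduce everything to the fact that membership in $\mathcal{S}_p$ is a two-sided ideal property among bounded operators, combined with the invertibility of $A$. First I would note that since $A$ is bounded and surjective, the open mapping theorem shows $A$ is invertible with bounded inverse $A^{-1}$; in particular $A^{*}$ is also invertible with bounded inverse $(A^{*})^{-1}=(A^{-1})^{*}$. The key structural fact to invoke is that $\mathcal{S}_p$ is closed under multiplication on either side by a bounded operator: if $S\in\mathcal{S}_p$ and $B_1,B_2\in\mathcal{B}(\mathcal{H})$, then $B_1SB_2\in\mathcal{S}_p$ with $\|B_1SB_2\|_{\mathcal{S}_p}\le\|B_1\|\,\|S\|_{\mathcal{S}_p}\,\|B_2\|$. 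This is standard (it follows from the definition of $\mathcal{S}_p$ via singular values together with the inequality $s_j(B_1SB_2)\le\|B_1\|\,\|B_2\|\,s_j(S)$, which the paper may freely cite from \cite{Zhu1}).

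With this in hand the proof is a short two-way implication. For the forward direction, suppose $T\in\mathcal{S}_p$; then $A^{*}TA$ is obtained from $T$ by left multiplication by the bounded operator $A^{*}$ and right multiplication by the bounded operator $A$, so $A^{*}TA\in\mathcal{S}_p$. For the reverse direction, suppose $A^{*}TA\in\mathcal{S}_p$; then
$$
T=(A^{*})^{-1}\,(A^{*}TA)\,A^{-1},
$$
and since $(A^{*})^{-1}$ and $A^{-1}$ are bounded (by the open mapping theorem), the same ideal property gives $T\in\mathcal{S}_p$. This closes the equivalence.

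I do not expect any real obstacle here; the only point requiring a word of care is justifying that $A$ is invertible rather than merely surjective, which is exactly where surjectivity (as opposed to mere boundedness) is used, via the open mapping theorem. If one prefers to avoid even that, one can instead phrase the hypothesis as "$A$ bounded and invertible," but as stated, surjectivity of a bounded operator on a Hilbert space already forces a bounded inverse once one also knows injectivity; in the intended applications $A$ will be an invertible change-of-variable-type operator, so the point is moot. For $0<p<1$ the quantity $\|\cdot\|_{\mathcal{S}_p}$ is only a quasi-norm, but the submultiplicativity-type estimate $\|B_1SB_2\|_{\mathcal{S}_p}\le\|B_1\|\,\|S\|_{\mathcal{S}_p}\,\|B_2\|$ still holds since it comes purely from the pointwise singular-value domination, so the argument is unchanged across the whole range $0<p<\infty$.
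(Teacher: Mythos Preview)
Your argument has a genuine gap in the reverse implication. You write that ``since $A$ is bounded and surjective, the open mapping theorem shows $A$ is invertible with bounded inverse $A^{-1}$,'' but the open mapping theorem gives no such thing: a bounded surjective operator is open, not injective. You acknowledge this later and dismiss it by saying that in the intended applications $A$ will be invertible, but that is precisely wrong here. In the paper's use of this lemma (the proofs of Lemmas~\ref{theo:main11} and~\ref{theo:main12}), the operator in question is $S$ defined by $Se_k=f_k$, sending an orthonormal basis to the atoms of the atomic decomposition from Theorem~\ref{theo:atomdecompo}. Such atomic systems are overcomplete, so $S$ is surjective but \emph{not} injective; your identity $T=(A^{*})^{-1}(A^{*}TA)A^{-1}$ is therefore unavailable.

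The fix is to use a one-sided inverse. Since $A$ is bounded and surjective, $A^{*}$ is bounded below, so $AA^{*}$ is a positive invertible operator; set $B:=A^{*}(AA^{*})^{-1}$. Then $AB=I$, hence $B^{*}A^{*}=I$, and therefore
\[
T \;=\; (B^{*}A^{*})\,T\,(AB) \;=\; B^{*}\,(A^{*}TA)\,B.
\]
Now the ideal property you quoted (valid for all $0<p<\infty$ via the singular-value inequality $s_j(B^{*}SB)\le\|B\|^{2}s_j(S)$) gives $T\in\mathcal{S}_p$ whenever $A^{*}TA\in\mathcal{S}_p$. The forward direction is as you wrote. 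The paper itself does not prove the lemma but cites \cite{Zhu2}; the argument there is the pseudoinverse one just described.
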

Finally, we will need the following result (see \cite{Luecking2})
\begin{lem}\label{lem:schattsufflower}
Let $T$ be any bounded operator on $\mathcal{H}$ and let $\{e_k\}$ be an orthonormal basis of $\mathcal{H}$. Then for any $0<p\le 2$, we have $$\|T\|_{\mathcal{S}_p}^p\le \sum_{k}\sum_j|\langle Te_k,e_j\rangle|^p.$$
\end{lem}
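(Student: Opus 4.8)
The plan is to reduce the Schatten quasi-norm to a trace of a power of $T^*T$ and then close the estimate with a single concavity/majorization inequality. Write $a_{jk}:=\langle Te_k,e_j\rangle$ for the matrix entries of $T$ in the basis $\{e_k\}$, so that the asserted right-hand side is $\sum_{k}\sum_j|a_{jk}|^p$. If this quantity is infinite there is nothing to prove, so I assume it is finite. Since $0<p\le 2$ we have $\ell^p\hookrightarrow\ell^2$, whence $\sum_{k,j}|a_{jk}|^2<\infty$; thus $T$ is Hilbert--Schmidt, in particular compact, and it possesses a well-defined sequence of singular values $\{s_n\}$ with $\|T\|_{\mathcal S_p}^p=\sum_n s_n^p=\mathrm{Tr}\big((T^*T)^{p/2}\big)$.

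The core of the argument is the chain
$$\|T\|_{\mathcal S_p}^p=\mathrm{Tr}\big((T^*T)^{p/2}\big)\le\sum_k\langle T^*Te_k,e_k\rangle^{p/2}=\sum_k\Big(\sum_j|a_{jk}|^2\Big)^{p/2}\le\sum_k\sum_j|a_{jk}|^p.$$
Here the first inequality is the trace estimate $\mathrm{Tr}(A^{s})\le\sum_k\langle Ae_k,e_k\rangle^{s}$ applied to the positive compact operator $A=T^*T$ with $s=p/2\in(0,1]$; it expresses that the diagonal $\{\langle Ae_k,e_k\rangle\}$ of a positive compact operator is majorized by its eigenvalue sequence, combined with the concavity of $t\mapsto t^{s}$ on $[0,\infty)$ (Hardy--Littlewood--P\'olya). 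The middle equality is Parseval's identity $\langle T^*Te_k,e_k\rangle=\|Te_k\|^2=\sum_j|a_{jk}|^2$, valid since $\{e_j\}$ is an orthonormal basis. The last inequality is the elementary subadditivity $\big(\sum_j b_j\big)^{s}\le\sum_j b_j^{s}$ for $0<s\le 1$, applied with $b_j=|a_{jk}|^2$ and $s=p/2$, giving $\big(\sum_j|a_{jk}|^2\big)^{p/2}\le\sum_j|a_{jk}|^p$. This establishes the lemma uniformly for the whole range $0<p\le 2$.

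I expect the only delicate point to be the infinite-dimensional trace/majorization inequality $\mathrm{Tr}(A^{p/2})\le\sum_k\langle Ae_k,e_k\rangle^{p/2}$. I would justify it by truncation: Ky Fan's maximum principle yields $\sum_{n\le N}\lambda_n\ge\sum_{k\le N}d_k^{\downarrow}$ for the decreasingly ordered eigenvalues $\lambda_n$ of $A$ and the rearranged diagonal entries $d_k^{\downarrow}$, so the Hardy--Littlewood--P\'olya inequality applied to the concave function $t\mapsto t^{p/2}$ gives $\sum_{n}\lambda_n^{p/2}\le\sum_k d_k^{p/2}$, where the finiteness $\sum_k d_k^{p/2}\le\sum_{k,j}|a_{jk}|^p<\infty$ legitimizes passing to the limit and simultaneously certifies $T\in\mathcal S_p$. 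Should one prefer to avoid majorization, an alternative is a two-range proof: for $0<p\le 1$ decompose $T=\sum_{j,k}a_{jk}E_{jk}$ into rank-one matrix units $E_{jk}\colon f\mapsto\langle f,e_k\rangle e_j$ (each with $\|E_{jk}\|_{\mathcal S_p}=1$) and invoke the quasi-norm subadditivity $\|A+B\|_{\mathcal S_p}^p\le\|A\|_{\mathcal S_p}^p+\|B\|_{\mathcal S_p}^p$; for $1\le p\le 2$ one interpolates complexly between the bound at $p=1$ and the isometric Hilbert--Schmidt identity at $p=2$. I would nonetheless present the majorization route, since it treats the entire range $0<p\le 2$ in one stroke.
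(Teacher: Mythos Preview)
Your argument is correct. The paper itself does not supply a proof of this lemma; it simply records the statement with a citation to Luecking's paper \cite{Luecking2}. So there is no ``paper's own proof'' to compare against.

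Your majorization route is the standard one and is cleanly executed: reducing to the positive operator $A=T^*T$, invoking Schur--Horn/Ky Fan to obtain $d\prec\lambda$ (legitimate here since $T$ Hilbert--Schmidt forces $A$ to be trace class, so the total sums agree), applying Hardy--Littlewood--P\'olya with the concave map $t\mapsto t^{p/2}$, and finishing with $\ell^{p/2}\hookrightarrow\ell^1$ on each column. The alternative two-range argument you sketch (rank-one decomposition plus $p$-subadditivity of the quasi-norm for $0<p\le1$; interpolation between $\mathcal S_1$ and $\mathcal S_2$ for $1\le p\le2$) is exactly the approach in Luecking's original paper, so either presentation would be appropriate. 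The majorization version has the advantage you note of handling the full range $0<p\le2$ uniformly.
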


\section{Schatten class membership of Toeplitz operators}
The aim of this section is to give criteria for Schatten class
membership of Toeplitz  operators on the weighted Bergman space
$A_\nu^2(\mathcal D)$.

\subsection{Proof of Theorem \ref{theo:main1}}
We start by proving the following.
\begin{lem}\label{theo:main11}
Let $\mu$ be a positive Borel measure on $\mathcal D$, and $\nu>\frac{n}{r}-1$. Assume that $\frac{\frac nr-1}{\nu+\frac nr}<p<1$. Suppose that for any $\delta$-lattice ($\delta\in (0,1)$) $\{\zeta_j\}_{j\in \mathbb N}$ in the Bergman metric of $\mathcal D$, the sequence $\{\hat {\mu}_\delta(\zeta_j)\}$ belongs to $l^p$, that is \begin{equation*}
\sum_j\left(\frac{\mu(B_j)}{\Da^{\nu+n/r}(\Im \zeta_j)}\right)^p<\infty.
\end{equation*}
Then the Toeplitz operator $T_\mu$ belongs
to the Schatten class $\mathcal {S}_p(A_\nu^2(\mathcal D))$. Moreover, $$\|T_\mu\|_{\mathcal{S}_p}^p\lesssim \sum_j\left(\frac{\mu(B_j)}{\Da^{\nu+n/r}(\Im \zeta_j)}\right)^p.$$
\end{lem}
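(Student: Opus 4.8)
The plan is to prove the sufficiency (ii)$\Rightarrow$(i) by the standard device of comparing $T_\mu$ with a measure supported on disjoint Bergman balls and then splitting into diagonal and off-diagonal pieces. First I would fix a $\delta$-lattice $\{\zeta_j\}$ with $\delta$ small enough that Lemma~\ref{lem:sampling}, Theorem~\ref{theo:atomdecompo} and Lemma~\ref{kor} all apply, and reduce matters to the measure $\mu' = \sum_j \mu(B_j')\,\delta_{\zeta_j}$ (point masses), or equivalently to estimating $\langle T_\mu f, f\rangle_\nu$ from above by $\sum_j \hat\mu_\delta(\zeta_j)|\langle f, k_\nu(\cdot,\zeta_j)\rangle_\nu|^2$-type quantities using the mean value inequality of Lemma~\ref{lem:meanvalue} and Kor\'anyi's Lemma~\ref{lem:Koranyi}. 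Since $0<p<1$ and $T_\mu$ is positive, by Lemma~\ref{lem:schattensmallexpo} it suffices to exhibit \emph{one} orthonormal basis $\{e_m\}$ of $A_\nu^2(\mathcal D)$ with $\sum_m \langle T_\mu e_m, e_m\rangle_\nu^p < \infty$; alternatively, and more robustly, one dominates $T_\mu$ by a sum $\sum_j c_j\, P_j$ of rank-type operators $P_j f = \langle f, k_\nu(\cdot,\zeta_j)\rangle_\nu\, k_\nu(\cdot,\zeta_j)$ with $c_j \asymp \hat\mu_\delta(\zeta_j)$, and uses the triangle inequality for $\|\cdot\|_{\mathcal S_p}^p$ (which is subadditive for $p\le 1$) to get $\|T_\mu\|_{\mathcal S_p}^p \lesssim \sum_j c_j^p \|P_j\|_{\mathcal S_p}^p \lesssim \sum_j \hat\mu_\delta(\zeta_j)^p$, since each $P_j$ has rank one.

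The subtlety is that $\{k_\nu(\cdot,\zeta_j)\}_j$ is not orthonormal, so the naive domination $T_\mu \le C\sum_j c_j P_j$ in the operator order is not literally available; one needs to account for the overlaps. Here is where I would invoke the partition of the lattice from Remark~\ref{rem:separated}: write $\{\zeta_j\} = \bigcup_{\ell=1}^{L} \{\zeta_{j}^{(\ell)}\}$ into finitely many $A$-separated subfamilies, for $A = A_\varepsilon$ to be chosen via Lemma~\ref{lem:sumdeltafunctestim}. For each fixed $\ell$, the associated Toeplitz-type operator $T^{(\ell)}$ (the part of $T_\mu$ coming from balls around $\zeta_j^{(\ell)}$) is split as $T^{(\ell)} = D^{(\ell)} + E^{(\ell)}$, diagonal plus off-diagonal with respect to the (nearly orthonormal, after the atomic decomposition of Theorem~\ref{theo:atomdecompo}) system $\{K_\mu(\cdot,\zeta_j)\Delta^{\cdots}(\Im\zeta_j)\}$. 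The diagonal part $D^{(\ell)}$ is essentially a weighted sum of rank-one projections with weights $\asymp \hat\mu_\delta(\zeta_j^{(\ell)})$, so $\|D^{(\ell)}\|_{\mathcal S_p}^p \lesssim \sum_j \hat\mu_\delta(\zeta_j^{(\ell)})^p$ directly.

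The main obstacle — as the paper itself flags — is controlling the off-diagonal part $E^{(\ell)}$. For $p\le 1$ one cannot afford a crude Schur-test bound; instead I would estimate $\|E^{(\ell)}\|_{\mathcal S_p}^p$ (using $\|E^{(\ell)}\|_{\mathcal S_p}^p \le \sum_{j\ne k} |\langle E^{(\ell)} u_j, u_k\rangle_\nu|^p$ from Lemma~\ref{lem:schattsufflower} when $p\le 2$, applied to a suitable orthonormal basis, or a similar trace-type estimate) by matrix entries of the form $\mu(B_j')\,|K_\nu(\zeta_j,\zeta_k)|^2 \Delta^{\nu+n/r}(\Im\zeta_j)^{1/2}\Delta^{\nu+n/r}(\Im\zeta_k)^{1/2}$ rewritten in terms of $\hat\mu_\delta$. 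Summing the $p$-th powers over $k\ne j$ for fixed $j$ produces a series $\sum_{k\ne j} |\Delta^{-\alpha}(\zeta_k - \bar\zeta_j)|\,\Delta^{\beta}(\Im\zeta_k)$ with $\alpha = p(\nu+\tfrac nr)$ and a matching $\beta$; the hypotheses $p > \frac{n/r-1}{\nu+n/r}$ (ensuring $\alpha > n/r - 1$ so that, combined with the mean-value averaging, the relevant integrability holds) are exactly what Lemma~\ref{lem:sumdeltafunctestim} requires to bound this by $\varepsilon\,\Delta^{-\alpha+\beta}(\Im\zeta_j)$. Choosing $A_\varepsilon$ large forces $\|E^{(\ell)}\|_{\mathcal S_p}^p \le \varepsilon \sum_j \hat\mu_\delta(\zeta_j^{(\ell)})^p$, so $\|T^{(\ell)}\|_{\mathcal S_p}^p \lesssim \sum_j \hat\mu_\delta(\zeta_j^{(\ell)})^p$, and summing over the $L$ subfamilies (again using subadditivity of $\|\cdot\|_{\mathcal S_p}^p$) gives $\|T_\mu\|_{\mathcal S_p}^p \lesssim \sum_j \hat\mu_\delta(\zeta_j)^p$, which is the claimed estimate. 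A final remark: I would first double-check that $T_\mu$ is well-defined and bounded (hence compact, being dominated in $\mathcal S_p \subset \mathcal S_\infty$ by the construction) under the $l^p$ hypothesis — this follows since $\ell^p \subset \ell^1 \subset \ell^\infty$ for $p<1$ and the case $p=1$ (or $p=\infty$) of Theorem~\ref{theo:SchattenToep} already gives boundedness.
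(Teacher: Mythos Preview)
You have the roles of the two implications swapped. The diagonal/off-diagonal splitting, the separated sublattices of Remark~\ref{rem:separated}, and the smallness coming from Lemma~\ref{lem:sumdeltafunctestim} constitute the machinery the paper deploys for the \emph{converse} (i)$\Rightarrow$(ii), i.e.\ Lemma~\ref{theo:main12}, where one needs a lower bound on $\|T_\mu\|_{\mathcal S_p}$ from the diagonal and must absorb the off-diagonal error. For the present sufficiency direction the paper's argument is much shorter and uses none of that. Your passing remark that ``by Lemma~\ref{lem:schattensmallexpo} it suffices to exhibit one orthonormal basis $\{e_m\}$ with $\sum_m\langle T_\mu e_m,e_m\rangle^p<\infty$'' is exactly the right idea; the piece you are missing is how to carry it out. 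The paper does it by transference: with $\sigma$ large, set $Se_k=f_k:=K_\sigma(\cdot,\zeta_k)\,\Delta^{\sigma+\frac nr-\frac12(\nu+\frac nr)}(\Im\zeta_k)$, which is bounded and surjective by Theorem~\ref{theo:atomdecompo}; then Lemma~\ref{lem:schattenviaorthoper} says $T_\mu\in\mathcal S_p$ iff $T:=S^*T_\mu S\in\mathcal S_p$, so one may test the positive operator $T$ against the genuine orthonormal basis $\{e_k\}$, and $\langle Te_k,e_k\rangle=\int|f_k|^2\,d\mu$.

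From there the estimate is direct and needs no off-diagonal control. Kor\'anyi gives $\langle Te_k,e_k\rangle\le C\sum_j|f_k(\zeta_j)|^2\mu(B_j)$; since $0<p<1$ one uses $(\sum_ja_j)^p\le\sum_ja_j^p$, swaps the two sums, and is left with the inner sum $\sum_k|f_k(\zeta_j)|^{2p}$. Passing to an integral by finite overlap and applying Lemma~\ref{lem:Apfunction} yields $\sum_k|f_k(\zeta_j)|^{2p}\lesssim\Delta^{-p(\nu+n/r)}(\Im\zeta_j)$, and the proof closes. It is precisely \emph{here} --- not in Lemma~\ref{lem:sumdeltafunctestim} --- that the hypothesis $p(\nu+\tfrac nr)>\tfrac nr-1$ enters, as the integrability condition Lemma~\ref{lem:Apfunction} needs. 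Your attribution of that lower bound on $p$ to Lemma~\ref{lem:sumdeltafunctestim} is misplaced: the exponent hypotheses of that lemma are arranged by taking $\sigma$ large and impose no constraint on $p$ (note that Lemma~\ref{theo:main12} carries no lower cut-off on $p$). In short, your second and third paragraphs describe the proof of the wrong lemma.
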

\begin{proof} 
Let $\sigma$ be large enough so that $P_\sigma$ is bounded on $L_\nu^2(\mathcal{D})$. Thanks to Lemma \ref{lem:variationoflattice}, we can suppose that $\delta$ is small enough so that any $f\in A_\nu^2(\mathcal{D})$ can represented as in Theorem \ref{theo:atomdecompo}. That is $$ f(z) = \sum_{j} {\lambda_{j}K_{\sigma}(z,
\zeta_{j})\Delta^{\sigma + \frac{n}{r}-\frac{1}{2}(\nu+\frac nr)}(\Im \zeta_{j})}$$ with
$
||\{\la_{j}\}||_{l^{2}}\asymp
||f||_{2,\nu}.$
\vskip .2cm
Let $\{e_k\}_{k\ge 1}$ be a fixed orthonormal basis on $A_\nu^2(\mathcal{D})$. Consider the operator $S:A_\nu^2(\mathcal{D})\rightarrow A_\nu^2(\mathcal{D})$ defined by $$S(e_k)=f_k$$
where $$f_k(z)=K_{\sigma}(z,
\zeta_{k})\Delta^{\sigma + \frac{n}{r}-\frac{1}{2}(\nu+\frac nr)}(\Im \zeta_{k}).$$
Then it follows from Theorem \ref{theo:atomdecompo} that $S$ is a bounded and surjective operator on $A_\nu^2(\mathcal{D})$. We know from Lemma \ref{lem:schattenviaorthoper} that $T_\mu$ belongs to $\mathcal {S}_p(A_\nu^2(\mathcal D))$ if and only if $T=S^*T_\mu S$ belongs to $\mathcal {S}_p(A_\nu^2(\mathcal D))$.  It follows from Lemma \ref{lem:schattensmallexpo} that we only have to prove that $$L:=\sum_{k=1}^\infty\langle Te_k,e_k\rangle^p<\infty.$$
We first observe that
$$\langle Te_k,e_k\rangle=\langle T_\mu f_k,f_k\rangle=\int_{\mathcal{D}}|f_k(z)|^2d\mu(z).$$
Hence using Lemma \ref{kor}, we obtain
\Beas
\langle Te_k,e_k\rangle &\le&  \sum_{j=1}^\infty \int_{B_\delta(\zeta_j)}|f_k(z)|^2d\mu(z)\\ &\le& C\sum_{j=1}^\infty |f_k(\zeta_j)|^2\mu(B_\delta(\zeta_j)).
\Eeas
Recalling that $0<p<1$, we then obtain
\Beas
\langle Te_k,e_k\rangle^p &\le& C\sum_{j=1}^\infty |f_k(\zeta_j)|^{2p}(\mu(B_\delta(\zeta_j)))^p\\ &\asymp& \sum_{j=1}^\infty |f_k(\zeta_j)|^{2p}\Delta^{\nu+\frac nr}(\Im \zeta_j)(\hat{\mu}_\delta(\zeta_j))^p.
\Eeas
Thus
\Beas
L &:=& \sum_{k=1}^\infty\langle Te_k,e_k\rangle^p\\ &\le& C\sum_{k=1}^\infty\sum_{j=1}^\infty |f_k(\zeta_j)|^{2p}\Delta^{\nu+\frac nr}(\Im \zeta_j)(\hat{\mu}_\delta(\zeta_j))^p\\ &\le& C\sum_{j=1}^\infty \Delta^{p(\nu+\frac nr)}(\Im \zeta_j)(\hat{\mu}_\delta(\zeta_j))^p\sum_{k=1}^\infty |f_k(\zeta_j)|^{2p}.
\Eeas
Using the fact that each point in $\mathcal{D}$ belongs to at most $N$ balls $B_k$ and the condition $\frac{\frac nr-1}{\nu+\frac nr}<p<1$, we obtain using Lemma \ref{lem:Apfunction}, the following for the inner sum
\Beas
L_j &:=& \sum_{k=1}^\infty |f_k(\zeta_j)|^{2p}\\ &=& \sum_{k=1}^\infty |K_{\sigma}(\zeta_j,
\zeta_{k})|^{2p}\Delta^{2p(\sigma + \frac{n}{r}-\frac{1}{2}(\nu+\frac nr))}(\Im \zeta_{k})\\ &\le& C\sum_{k=1}^\infty\int_{B_\delta(\zeta_k)} |K_{\sigma}(\zeta_j,
z)|^{2p}\Delta^{2p(\sigma + \frac{n}{r}-\frac{1}{2}(\nu+\frac nr))}(\Im z)dV(z)\\ &\le& CN\int_{\mathcal{D}}|K_{\sigma}(\zeta_j,
z)|^{2p}\Delta^{2p(\sigma + \frac{n}{r}-\frac{1}{2}(\nu+\frac nr))}(\Im z)dV(z)\\ &\le& CN\Delta^{-p(\nu+\frac nr)}(\Im \zeta_j).
\Eeas
Using the latter, we conclude that
\Beas
L &:=& \sum_{k=1}^\infty\langle Te_k,e_k\rangle^p\\ &\le& C\sum_{j=1}^\infty(\hat{\mu}_\delta(\zeta_j))^p<\infty.
\Eeas
\end{proof}
We next prove the reverse of the above result.
\begin{lem}\label{theo:main12}
Let $\mu$ be a positive measure on $\mathcal{D}$. Assume that $T_\mu\in \mathcal{S}_p(A_\nu^2(\mathcal{D}))$ for some $0<p<1$. Let $\{\zeta_j\}_{j\in \mathbb{N}}$ be a $\delta$-lattice in $\mathcal{D}$. Then the sequence $\{\hat{\mu}_\delta(\zeta_j)\}$ belongs to $l^p$. Moreover, $$\sum_{j}\left(\hat{\mu}_\delta(\zeta_j)\right)^p\lesssim \|T_\mu\|_{\mathcal{S}_p}^p.$$
\end{lem}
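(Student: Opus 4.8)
The plan is to follow K.~Zhu's scheme \cite{Zhu3}, with the B\'ekoll\'e--Temgoua estimate Lemma~\ref{lem:sumdeltafunctestim} replacing the use of the boundedness of the ball.

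\emph{Reduction.} By Remark~\ref{rem:separated}, split the given $\delta$--lattice $\{\zeta_j\}$ into finitely many subsequences, each of which is $A$--separated, where $A$ will be chosen large. Since $\sum_j\hat\mu_\delta(\zeta_j)^p$ is then a finite sum of the corresponding partial sums, it suffices to prove $\sum_{j\in I}\hat\mu_\delta(\zeta_j)^p\lesssim\|T_\mu\|_{\mathcal S_p}^p$ for one such subsequence $I$ with $d(\zeta_j,\zeta_k)\ge A$ for all $j\neq k$ in $I$; summing over the (fixed, finite) number of pieces only changes the implied constant.

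\emph{Passage to a matrix.} Fix $\sigma$ large enough that $P_\sigma$ is bounded on $L^2_\nu(\mathcal D)$ and, shrinking $\delta$ if necessary (allowed by Lemma~\ref{lem:variationoflattice}) so that the atomic decomposition holds, let $S$ be the bounded operator on $A_\nu^2(\mathcal D)$ with $Se_k=f_k:=K_\sigma(\cdot,\zeta_k)\Delta^{\sigma+\frac nr-\frac12(\nu+\frac nr)}(\Im\zeta_k)$, $\{e_k\}$ a fixed orthonormal basis indexed by the whole lattice, exactly as in the proof of Lemma~\ref{theo:main11}. Since $\mathcal S_p$ is an operator ideal, $\|T\|_{\mathcal S_p}\lesssim\|T_\mu\|_{\mathcal S_p}$ for $T:=S^*T_\mu S\ge0$, and $\langle Te_k,e_j\rangle_\nu=\int_{\mathcal D}f_k\overline{f_j}\,d\mu$ (cf. Lemma~\ref{lem:schattenviaorthoper}). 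Let $Q$ be the orthogonal projection onto $\overline{\mathrm{span}}\{e_k:k\in I\}$; as $\|Q\|\le1$, $s_n(QTQ)\le s_n(T)$, so $\|QTQ\|_{\mathcal S_p}\le\|T\|_{\mathcal S_p}$, and $QTQ$ is unitarily equivalent (up to the zero operator on the orthogonal complement) to the operator on $l^2(I)$ with matrix $\big(\int_{\mathcal D}f_k\overline{f_j}\,d\mu\big)_{k,j\in I}$.

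\emph{Diagonal versus off--diagonal.} Split this matrix as $D+E$, $D:=\mathrm{diag}\big(\int_{\mathcal D}|f_k|^2d\mu\big)_{k\in I}$. For $w\in B_\delta(\zeta_k)$, Kor\'anyi's Lemma~\ref{lem:Koranyi} gives $|K_\sigma(w,\zeta_k)|\asymp K_\sigma(\zeta_k,\zeta_k)$ once $\delta$ is small, hence $|f_k(w)|^2\asymp\Delta^{-(\nu+\frac nr)}(\Im\zeta_k)$; together with $\mu(B_\delta(\zeta_k))\asymp\Delta^{\nu+\frac nr}(\Im\zeta_k)\,\hat\mu_\delta(\zeta_k)$ this gives $\int_{\mathcal D}|f_k|^2d\mu\ge\int_{B_\delta(\zeta_k)}|f_k|^2d\mu\gtrsim\hat\mu_\delta(\zeta_k)$, so $\|D\|_{\mathcal S_p}^p=\sum_{k\in I}\big(\int_{\mathcal D}|f_k|^2d\mu\big)^p\gtrsim\sum_{k\in I}\hat\mu_\delta(\zeta_k)^p$. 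For $E$, Lemma~\ref{lem:schattsufflower} (using $0<p\le1$) gives $\|E\|_{\mathcal S_p}^p\le\sum_{k\in I}\sum_{j\in I,\,j\neq k}\big|\int_{\mathcal D}f_k\overline{f_j}\,d\mu\big|^p$. Bounding $\big|\int f_k\overline{f_j}\,d\mu\big|\le\sum_m\int_{B_\delta(\zeta_m)}|f_k|\,|f_j|\,d\mu$ over the full lattice, replacing $|f_k|,|f_j|$ on $B_\delta(\zeta_m)$ by their values at $\zeta_m$ (Lemma~\ref{lem:Koranyi}) and $\mu(B_\delta(\zeta_m))$ by $\Delta^{\nu+\frac nr}(\Im\zeta_m)\,\hat\mu_\delta(\zeta_m)$, and using the explicit power--of--$\Delta$ form of $f_k$, one reduces the inner sum over $j\neq k$ to sums over the separated family $I$ of exactly the shape controlled by Lemma~\ref{lem:sumdeltafunctestim}. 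Taking $\sigma$ large validates the hypotheses of that lemma, and taking $A=A_\varepsilon$ large forces its constant to be a prescribed $\varepsilon$; one concludes $\|E\|_{\mathcal S_p}^p\le\varepsilon\,C\,\|D\|_{\mathcal S_p}^p$ with the inner estimate uniform in $k$. (The hypothesis $p>\frac{\frac nr-1}{\nu+\frac nr}$ enters precisely here, as the convergence condition for the determinant integrals it produces, via Lemma~\ref{lem:Apfunction}.)

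\emph{Conclusion and main difficulty.} Fix $\varepsilon$ with $\varepsilon C\le\frac12$, and argue first over a finite $F\subseteq I$ (so all Schatten norms are finite), with $Q_F$ the projection onto $\overline{\mathrm{span}}\{e_k:k\in F\}$ and $D_F,E_F$ the diagonal and off--diagonal parts of $Q_FTQ_F$: quasi--subadditivity of $\|\cdot\|_{\mathcal S_p}^p$ for $0<p\le1$ and the uniform-in-$k$ estimate above give $\|D_F\|_{\mathcal S_p}^p\le\|Q_FTQ_F\|_{\mathcal S_p}^p+\|E_F\|_{\mathcal S_p}^p\le\|T\|_{\mathcal S_p}^p+\tfrac12\|D_F\|_{\mathcal S_p}^p$, hence $\sum_{k\in F}\big(\int_{\mathcal D}|f_k|^2d\mu\big)^p\le2\|T\|_{\mathcal S_p}^p$. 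Letting $F$ exhaust $I$ and using the lower bound for $\|D\|_{\mathcal S_p}^p$ yields $\sum_{k\in I}\hat\mu_\delta(\zeta_k)^p\lesssim\|T\|_{\mathcal S_p}^p\lesssim\|T_\mu\|_{\mathcal S_p}^p$, and summing over the finitely many separated subfamilies proves the lemma. I expect the off--diagonal estimate to be the only genuine obstacle: one has to bound $\|E\|_{\mathcal S_p}^p$ by a quantity that can be forced below $\tfrac12\|D\|_{\mathcal S_p}^p$, and for that both the reduction to a separated family and the free small parameter in the B\'ekoll\'e--Temgoua estimate are indispensable; the rest is routine estimation with Bergman balls, Kor\'anyi's lemma, and the integrability lemma for $\Delta$.
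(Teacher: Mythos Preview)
Your scheme is essentially the paper's, but there is one genuine gap in the off-diagonal step that prevents the absorption argument from closing.

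After you expand $\big|\int f_k\overline{f_j}\,d\mu\big|$ over balls of the \emph{full} lattice and apply the B\'ekoll\'e--Temgoua estimate to the $j$- and $k$-sums (which already requires a small extension of Lemma~\ref{lem:sumdeltafunctestim}, since the base point $\zeta_m$ need not lie in $I$), the bound you obtain is
\[
\|E\|_{\mathcal S_p}^p \;\lesssim\; \varepsilon \sum_{m\in\text{full lattice}} \hat\mu_\delta(\zeta_m)^p,
\]
not $\varepsilon\,C\,\|D\|_{\mathcal S_p}^p$. Your diagonal only controls the partial sum $\sum_{k\in I}\hat\mu_\delta(\zeta_k)^p$, and there is no a~priori reason why $\sum_m\hat\mu_\delta(\zeta_m)^p$ over all $m$ is comparable to it---indeed, this full-lattice sum is exactly the quantity you are trying to prove finite. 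The finite-$F$ trick does not rescue this: for every finite $F\subset I$ the bound on $\|E_F\|_{\mathcal S_p}^p$ still involves the same (possibly infinite) full-lattice sum, so $\|D_F\|_{\mathcal S_p}^p\le\|T\|_{\mathcal S_p}^p+\tfrac12\|D_F\|_{\mathcal S_p}^p$ does not follow.

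The paper avoids this by one extra move you have omitted: before forming $T$, it replaces $\mu$ by the truncated measure $d\omega=\sum_{k\in I}\chi_{B_k}\,d\mu$ (using that the balls $B_k$, $k\in I$, are disjoint once $A$ is large). Then $0\le T_\omega\le T_\mu$ gives $\|T_\omega\|_{\mathcal S_p}\le\|T_\mu\|_{\mathcal S_p}$; one sets $T=S^*T_\omega S$; and the decomposition $\int\,d\omega=\sum_{l\in I}\int_{B_l}d\mu$ makes the outer sum range only over $I$. Now Lemma~\ref{lem:sumdeltafunctestim} applies verbatim (all indices live in the separated family), and the off-diagonal bound becomes $\|R\|_{\mathcal S_p}^p\lesssim\varepsilon\sum_{l\in I}\hat\mu_\delta(\zeta_l)^p$, which \emph{is} dominated by $\|D\|_{\mathcal S_p}^p$ and can be absorbed. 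With this single modification, the rest of your argument goes through.
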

\begin{proof}
We start by considering $\sigma$ large enough so that $\sigma+\frac nr$ and $\sigma+\frac nr-\frac{1}{2}(\nu+\frac{n}{r})$ satisfy the conditions in Lemma \ref{lem:sumdeltafunctestim}. Let $\varepsilon>0$, and let $A_\varepsilon$ be as in Lemma \ref{lem:sumdeltafunctestim}. Following Remark \ref{rem:separated}, we may assume that our sequence $\{\zeta_j\}$ is such that $d(\zeta_j,\zeta_k)>A_\varepsilon$ for $j\neq k$. We further assume that $A_\varepsilon$ is large enough so that corresponding balls $B_k$ are disjoint. Consider the following measure: $$d\omega(z)=\sum_{k}\chi_{B_k}(z)d\mu(z).$$
Then $0\le \omega\le \mu$, $\omega=\mu$ on each ball $B_k$. We also have the inequality $\|T_\omega\|_{\mathcal{S}_p}^p\le \|T_\mu\|_{\mathcal{S}_p}^p$.
\vskip .2cm
Now as in the proof of the previous result, we fix an orthonormal basis $\{e_k\}$ of $A_\nu^2(\mathcal{D})$ and consider the same operator $S$ defined on $A_\nu^2(\mathcal{D})$ by $S(e_k)=f_k$ with $$f_k(z)=K_\sigma(z,\zeta_k)\Delta^{\sigma+\frac nr-\frac{1}{2}(\nu+\frac{n}{r})}(\Im \zeta_j).$$
We recall with Theorem \ref{theo:atomdecompo} that $S$ is bounded and surjective on $A_\nu^2(\mathcal{D})$. Put again $T=S^*T_\omega S$. Then as $T_\omega\in \mathcal{S}_p(A_\nu^2(\mathcal{D}))$, $T$ also belongs to $ \mathcal{S}_p(A_\nu^2(\mathcal{D}))$ and we have $$\|T\|_{\mathcal{S}_p}\le \|S\|^2\|T_\omega\|_{\mathcal{S}_p}\le C\|T_\mu\|_{\mathcal{S}_p}.$$
The main idea of the proof is to show that the $l^p$-norm of the sequence $\{\hat{\mu}_{\delta}(\zeta_j)\}$ is up to a constant a lower bound for $\|T\|_{\mathcal{S}_p}$. For this we decompose $T$ as $T=D+R$, where $D$ is the positive diagonal operator on $A_\nu^2(\mathcal{D})$ given by $$Df:=\sum_{k}\langle Te_k,e_k\rangle\langle f,e_k\rangle e_k,\,\,\,f\in A_\nu^2(\mathcal{D})$$
and $R=T-D$.
We observe that $$\|T\|_{\mathcal{S}_p}^p\ge \frac{1}{2}\|D\|_{\mathcal{S}_p}^p-\|R\|_{\mathcal{S}_p}^p.$$
Hence, if we can prove that
$$\|D\|_{\mathcal{S}_p}^p\ge c_1\sum_j\left(\hat{\mu}_{\delta}(\zeta_j)\right)^p$$
and 
$$\|R\|_{\mathcal{S}_p}^p\le c_2\sum_j\left(\hat{\mu}_{\delta}(\zeta_j)\right)^p$$
with $c_2$ as small as we want, then the proof will be completed.
\vskip .1cm
We start by estimating the diagonal operator $D$. As $D$ is positive, we have
\Beas
\|D\|_{\mathcal{S}_p}^p &=& \sum_k\langle Te_k,e_k\rangle^p=\sum_k\langle T\omega f_k,f_k\rangle^p\\ &=& \sum_k\left(\int_{\mathcal{D}}|f_k(z)|^2d\omega(z)\right)^p\\ &\ge& \sum_k\left(\int_{B_k}|f_k(z)|^2d\omega(z)\right)^p\\ &=& \sum_k\left(\int_{B_k}|f_k(z)|^2d\omega(z)\right)^p\\ &=& \sum_k\left(\int_{B_k}|f_k(z)|^2d\mu(z)\right)^p\\ &\asymp& \sum_{k}\left(\hat{\mu}_\delta(\zeta_k)\right)^p.
\Eeas
That is 
$$\|D\|_{\mathcal{S}_p}^p\ge c_1\sum_{j}\left(\hat{\mu}_\delta(\zeta_j)\right)^p.$$
We now turn to the estimation of $\|R\|_{\mathcal{S}_p}^p$.  First, using Lemma \ref{lem:schattensmallexpo}, we obtain
\Beas
\|R\|_{\mathcal{S}_p}^p &\le& \sum_k\sum_j|\langle Re_j,e_k\rangle|^p\\ &=& \sum_{\{j,k:j\neq k\}}|\langle T_\omega f_j,f_k\rangle|^p\\ &=& \sum_{\{j,k:j\neq k\}}\left|\int_{\mathcal{D}}f_j(z)f_k(z)d\omega(z)\right|^p\\ &\le& \sum_{\{j,k:j\neq k\}}\left(\int_{\mathcal{D}}|f_j(z)||f_k(z)|d\omega(z)\right)^p.
\Eeas 
As the balls $B_l$ are disjoint, using Lemma \ref{kor}, we obtain
\Beas
\int_{\mathcal{D}}|f_j(z)||f_k(z)|d\omega(z) &=& \sum_{l}\int_{B_l}|f_j(z)||f_k(z)|d\mu(z)\\ &\le& C\sum_i|f_j(\zeta_l)||f_k(\zeta_l)|\mu(B_l)\\ &\asymp& \sum_l|f_j(\zeta_l)||f_k(\zeta_l)|\Delta^{\nu+\frac nr}(\Im \zeta_l)\hat{\mu}_\delta(\zeta_l).
\Eeas
As $0<p<1$, it follows that
\Beas
\|R\|_{\mathcal{S}_p}^p &\le& C\sum_l\Delta^{p(\nu+\frac nr)}(\Im \zeta_l)\left(\hat{\mu}_\delta(\zeta_l)\right)^pL_l
\Eeas
where
\Beas
L_l &:=& \sum_{\{j,k:j\neq k\}}|f_j(\zeta_l)|^p|f_k(\zeta_l)|^p\\ &=& \sum_{\{j,k:j\neq k\}}|\Delta^{-p(\mu+\frac nr)}(\frac{\zeta_j-\overline{\zeta}_l}{i})|\Delta^{p(\mu+\frac nr-\frac{1}{2}(\nu+\frac nr))}(\Im \zeta_j)\times\\ & & |\Delta^{-p(\mu+\frac nr)}(\frac{\zeta_k-\overline{\zeta}_l}{i})|\Delta^{p(\mu+\frac nr-\frac{1}{2}(\nu+\frac nr))}(\Im \zeta_k)\\ &=& 2L_l^1+L_l^2
\Eeas
with
$$L_l^1:=\Delta^{-\frac{p}{2}(\nu+\frac nr))}(\Im \zeta_l)\sum_{\{j:j\neq l\}}|\Delta^{-p(\mu+\frac nr)}(\frac{\zeta_j-\overline{\zeta}_l}{i})|\Delta^{p(\mu+\frac nr-\frac{1}{2}(\nu+\frac nr))}(\Im \zeta_j)$$
and 
\Beas
L_l^2 &:=& \sum_{\{j,k:j\neq l\neq k\,\,\,\textrm{and}\,\,\,j\neq k\}}|\Delta^{-p(\mu+\frac nr)}(\frac{\zeta_j-\overline{\zeta}_l}{i})|\Delta^{p(\mu+\frac nr-\frac{1}{2}(\nu+\frac nr))}(\Im \zeta_j)\times\\ & & |\Delta^{-p(\mu+\frac nr)}(\frac{\zeta_k-\overline{\zeta}_l}{i})|\Delta^{p(\mu+\frac nr-\frac{1}{2}(\nu+\frac nr))}(\Im \zeta_k).
\Eeas
Using Lemma \ref{lem:sumdeltafunctestim} we obtain
$$L_l^1\le \varepsilon\Delta^{-p(\nu+\frac nr))}(\Im \zeta_l)$$
and 
$$L_l^2\le \varepsilon^2\Delta^{-p(\nu+\frac nr))}(\Im \zeta_l).$$
It follows that
$$\|R\|_{\mathcal{S}_p}^p\le c_2(2\varepsilon+\varepsilon^2)\sum_j\left(\hat{\mu}_\delta(\zeta_j)\right)^p.$$
Hence
$$\|T\|_{\mathcal{S}_p}^p\ge \left[\frac{c_1}{2}-c_2(2\varepsilon+\varepsilon^2)\right]\sum_j\left(\hat{\mu}_\delta(\zeta_j)\right)^p.$$
Taking $\varepsilon$ small enough so that $\frac{c_1}{2}-c_2(2\varepsilon+\varepsilon^2)>0$, we conclude that $$\sum_j\left(\hat{\mu}_\delta(\zeta_j)\right)^p<\infty.$$ The proof is complete.
\end{proof}
We can now prove Theorem \ref{theo:main1}.
\begin{proof}[Proof of Theorem \ref{theo:main1}]
We start by proving the necessity of the condition $p>\frac{2\frac{n}{r}-1}{\nu+\frac nr}$ in assertion (iv). We recall that ${\bf e}$ is the identity element of $V$. We may suppose that $\mu(B_1(i{\bf e}))>0$ (if not change the radius of the Bergman ball). Then using Lemma \ref{lem:Koranyi}, we obtain
\Beas
\tilde{\mu}(z) &=& \int_{\mathcal{D}}|k_z^\nu(w)|^2d\mu(w)\\ &\ge& \int_{B_1(i{\bf e})}|k_z^\nu(w)|^2d\mu(w)\\ &\ge& C\mu(B_1(i{\bf e}))|\Delta^{-(\nu+\frac nr)}(\frac zi+{\bf e})|^2\Delta^{\nu+\frac nr}(\Im z).
\Eeas
It follows that if $\tilde{\mu}(z)\in L^p(\mathcal{D},d\lambda)$, then we should have
$$\int_{\mathcal{D}}|\Delta^{-(\nu+\frac nr)}(\frac zi+{\bf e})|^{2p}\Delta^{p(\nu+\frac nr)}(\Im z)\frac{dV(z)}{\Delta^{2\frac nr}(\Im z)}<\infty$$
which by Lemma \ref{lem:Apfunction} is possible only if $p(\nu+\frac nr)>2\frac nr-1$.
\vskip .2cm
Now the equivalences (ii)$\Leftrightarrow$(iii)$\Leftrightarrow$(iv) are from Lemma \ref{lem:integraldiscretizationAverBer}. The equivalence (i)$\Leftrightarrow$(ii) is derived from Lemma \ref{theo:main11} and Lemma \ref{theo:main12}. The proof is complete.

\end{proof}
\subsection{Schatten class for general operators}
We consider here Schatten class criteria for an arbitrary operator defined on
$A_\nu^2(\mathcal{D})$ with values in a Hilbert space $\mathcal{H}$. We denote by $\mathcal{B}(A^2(\mathcal{D}), \mathcal{H})$ the set of bounded operators from $A_\nu^2(\mathcal{D})$ to $\mathcal{H}$. To avoid any confusion, we denote by $\langle \cdot,\cdot\rangle_\mathcal{H}$ and $\langle\cdot,\cdot\rangle_\nu$ the inner products in $\mathcal{H}$ and $A_\nu^2(\mathcal{D})$ respectively. We start with the Hilbert-Schmidt class $\mathcal{S}_2:=\mathcal{S}_2(A_\nu^2(\mathcal{D}),\mathcal{H})$.
\begin{prop}\label{prop:hilberysmigene} Let $T\in \mathcal{B}(A^2(\mathcal{D}),
\mathcal{H})$ then
$$||T||_{\mathcal{S}_2(A_\nu^2(\mathcal{D}),\mathcal{H})}^2=
C_{n,m}\int_{\mathcal{D}}||T(k_z^{\nu,m})||_{\mathcal{H}}^2d\lambda(z),$$
for every integer $m\ge 0$.  
\end{prop}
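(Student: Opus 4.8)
The plan is to compute the Hilbert–Schmidt norm of $T$ by expanding it over an orthonormal basis of $A_\nu^2(\mathcal D)$ and then recognizing the resulting sum as a Riemann-sum-like expression that integrates against the invariant measure $d\lambda$. Recall that for $T\in\mathcal B(A_\nu^2(\mathcal D),\mathcal H)$ one has the standard identity $\|T\|_{\mathcal S_2}^2=\sum_k\|T e_k\|_{\mathcal H}^2$ for any orthonormal basis $\{e_k\}$ of $A_\nu^2(\mathcal D)$, and this is independent of the choice of basis. The key observation is that the reproducing property gives $e_k(z)=\langle e_k,K_\nu(\cdot,z)\rangle_\nu$, and more generally, differentiating under the reproducing formula with the operator $\Box_z^m$ (using $\Box_z K_\nu(z,w)=C_\nu K_{\nu+1}(z,w)$ iterated $m$ times, so that $K_z^{\nu,m}(w)=\Box_z^m K_\nu(z,w)=C_{n,m}K_{\nu+m}(z,w)$ up to constants) yields a reproducing-type formula for the "derivative" $\Box_z^m e_k$ in terms of $K_z^{\nu,m}$.

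First I would fix an orthonormal basis $\{e_k\}$ of $A_\nu^2(\mathcal D)$ and write $\|T\|_{\mathcal S_2}^2=\sum_k\|Te_k\|_{\mathcal H}^2$. Then I would use the pointwise identity obtained from the reproducing kernel: for each $z\in\mathcal D$,
\[
\sum_k |\langle e_k, k_z^{\nu,m}\rangle_\nu|^2 = \|k_z^{\nu,m}\|_{2,\nu}^2 = 1,
\]
but more usefully, $\sum_k e_k \otimes \overline{e_k}$ represents the kernel $K_\nu$, and $\Box_z^m$ applied in the $z$-variable produces $K_z^{\nu,m}$. Concretely, the function $w\mapsto \sum_k \overline{(\Box_z^m e_k)(z)}\, e_k(w)$ equals a constant multiple of $K_z^{\nu,m}(w)=K_{\nu+m}(z,w)$ (up to the constant $C_{n,m}$), and since $\|K_{\nu+m}(z,\cdot)\|_{2,\nu}$ is computable via Lemma \ref{lem:Apfunction} as a power of $\Delta(\Im z)$, one gets that $\{c_{n,m}\Delta^{?}(\Im z)\,\overline{k_z^{\nu,m}}\}$ behaves like a "continuous orthonormal family." The concrete computation I would carry out: expand $\|T(k_z^{\nu,m})\|_{\mathcal H}^2 = \sum_{j,k}\langle Te_j,Te_k\rangle_{\mathcal H}\,\overline{\langle e_j,k_z^{\nu,m}\rangle_\nu}\,\langle e_k,k_z^{\nu,m}\rangle_\nu$, integrate in $z$ against $d\lambda$, and use the orthogonality relation
\[
\int_{\mathcal D}\overline{\langle e_j,k_z^{\nu,m}\rangle_\nu}\,\langle e_k,k_z^{\nu,m}\rangle_\nu\, d\lambda(z) = \frac{1}{C_{n,m}}\,\delta_{jk},
\]
which follows from the reproducing property of $K_{\nu+m}$ together with the explicit normalization $\|K_z^{\nu,m}\|_{2,\nu}^2 \asymp \Delta^{-(\nu+2m+\frac nr)}(\Im z)$ (by Lemma \ref{lem:Apfunction}, which also dictates the appearance of the invariant measure $d\lambda(z)=\Delta^{-2n/r}(\Im z)\,dV(z)$ and why the exponents conspire). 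Plugging this back yields $\int_{\mathcal D}\|T(k_z^{\nu,m})\|_{\mathcal H}^2\,d\lambda(z)=\frac{1}{C_{n,m}}\sum_k\|Te_k\|_{\mathcal H}^2$, which is the claim.

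The main obstacle I anticipate is justifying the interchange of summation over $k$ with the integral over $\mathcal D$ (Fubini/Tonelli) and, more delicately, verifying the exact orthogonality relation $\int_{\mathcal D}\overline{e_j(z)}\cdots$-type identity with the correct constant — this requires writing $\langle e_k,k_z^{\nu,m}\rangle_\nu$ explicitly as a constant times $\Box_z^m e_k(z)\cdot\Delta^{\frac12(\nu+2m+\frac nr)}(\Im z)$ via the reproducing formula differentiated $m$ times, and then applying the reproducing property of the weighted Bergman space $A_{\nu}^2$ with respect to the measure $dV_{\nu}$ after a change of weight (the integral $\int_{\mathcal D}\Box_z^m e_j(z)\,\overline{\Box_z^m e_k(z)}\,\Delta^{\nu+2m-\frac nr}(\Im z)\,dV(z)$ must be shown to equal $C\,\delta_{jk}$, which is essentially the statement that $\Box^m$ is an isometry from $A_\nu^2$ to $A_{\nu+2m}^2$ up to a constant, a fact that follows from the action of $\Box$ on the kernel). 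For $m=0$ this is trivial; for general $m$ one reduces to the $m=0$ case by iterating $\Box_z K_\nu(z,w)=C_\nu K_{\nu+1}(z,w)$ and invoking the corresponding isometry property already available in the literature (\cite{BBPR}). Once this normalization is pinned down the rest is bookkeeping of constants, collected into $C_{n,m}$.
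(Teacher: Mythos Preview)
Your proof is correct and rests on the same key fact as the paper's --- that $\Box^m$ is, up to a constant, an isometry from $A_\nu^2(\mathcal D)$ onto $A_{\nu+2m}^2(\mathcal D)$ --- but the computation is organized differently. You expand $k_z^{\nu,m}$ over an orthonormal basis $\{e_k\}$ of $A_\nu^2(\mathcal D)$, integrate in $z$, and reduce to the orthogonality relation $\int_{\mathcal D}\overline{\langle e_j,k_z^{\nu,m}\rangle_\nu}\,\langle e_k,k_z^{\nu,m}\rangle_\nu\,d\lambda(z)=C_{n,m}^{-1}\delta_{jk}$, which is precisely the isometry statement in disguise. The paper instead expands $\|T(K_z^{\nu,m})\|_{\mathcal H}^2$ over an orthonormal basis $\{e_j\}$ of the \emph{target} space $\mathcal H$, passes to the adjoint via $\langle TK_z^{\nu,m},e_j\rangle_{\mathcal H}=\langle K_z^{\nu,m},T^*e_j\rangle_\nu=\overline{\Box_z^m(T^*e_j)(z)}$, and then applies the isometry directly to obtain $\sum_j\|T^*e_j\|_{2,\nu}^2=\|T^*\|_{\mathcal S_2}^2=\|T\|_{\mathcal S_2}^2$. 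The paper's route is shorter and avoids the double sum and the Fubini justification you anticipated; your route makes the underlying resolution-of-identity structure more explicit but requires that extra bookkeeping.
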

\begin{proof} 
This result was proved in \cite{S}. As the definition of Bergman spaces here is quite different, let us give a proof here for completeness.
Let $\{e_j\}$ is an orthonormal basis of $\mathcal{H}$, then
\begin{eqnarray*}
\int_{\mathcal{D}}||T(K_z^{\nu,m})||_{\mathcal{H}}^2
\Delta^{2m+\nu}(\Im z)dV(z) &=&
\int_{\mathcal{D}}\sum_{j=0}^{\infty}
|\langle TK_z^{\nu,m},e_j\rangle_\mathcal{H}|^2 \Delta^{2m+\nu}(\Im z)dV(z)\\
&=& \sum_{j=0}^{\infty}\int_{\mathcal{D}}
|\langle K_z^{\nu,m},T^*e_j\rangle_\nu|^2\Delta^{2m+\nu}(\Im z)dV(z)\\
&=&
\sum_{j=0}^{\infty}\int_{\mathcal{D}}|\Box_z^{m}T^*e_j(z)|^2\Delta^{2m+\nu}(\Im z)dV(z)\\
&=& C_{n,m}\sum_{j=0}^{\infty}\int_{\mathcal{D}}|T^*e_j(z)|^2dV_\nu(z)\\
&=& C_{n,m}\sum_{j=0}^{\infty}||T^*e_j||_{A_\nu^2}^2\\
&=&
C_{n,m}||T^*||_{\mathcal{S}_2}^2=C_{n,m}||T||_{\mathcal{S}_2}^2.\end{eqnarray*}
In the fourth equality, we used the fact that $\Box_z^{m}$
is an isometric (up to constant $C_{n,m}$) isomorphism from
$A_\nu^2(\mathcal{D})$ onto $A_{2m+\nu}^2(\mathcal{D})$.\end{proof}
We will deduce some results from the above one. The first one is the following which follows as in \cite[Lemma 3.2]{S}.
\begin{prop}\label{prop:genebox} Suppose that $T\in
\mathcal{B}(A_\nu^2(\mathcal{D}),\mathcal{H})$. Let $m\ge 0$ be an integer. Then
\begin{itemize}\item[i)] if $T\in \mathcal{S}_p(A_\nu^2(\mathcal{D}),\mathcal{H})$ for $2 < p<\infty$, then
$$\int_{\mathcal{D}}||T(k_z^{\nu,m})||_{\mathcal{H}}^pd\lambda(z)\le C_{n,m}||T||_{\mathcal{S}_p(A_\nu^2(\mathcal{D}),\mathcal{H})}^p.$$
\item[ii)] If for $0< p<2$,
$$\int_{\mathcal{D}}||T(k_z^{\nu,m})||_{\mathcal{H}}^pd\lambda(z) <\infty,$$
then $T\in \mathcal{S}_p(A_\nu^2(\mathcal{D}),\mathcal{H})$.
Moreover,$$||T||_{\mathcal{S}_p(A_\nu^2(\mathcal{D}),\mathcal{H})}^p\le
C_{n,m}\int_{\mathcal{D}}||T(k_z^{\nu,m})||_{\mathcal{H}}^pd\lambda(z).$$\end{itemize}
\end{prop}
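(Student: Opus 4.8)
The plan is to derive both statements from the Hilbert--Schmidt identity of Proposition \ref{prop:hilberysmigene} by a standard interpolation/duality argument, exploiting the fact that $\Box_z^m$ is (up to the constant $C_{n,m}$) an isometric isomorphism from $A_\nu^2(\mathcal{D})$ onto $A_{2m+\nu}^2(\mathcal{D})$. First I would reformulate the quantity $\int_{\mathcal{D}}\|T(k_z^{\nu,m})\|_{\mathcal{H}}^p\,d\lambda(z)$ as an $L^p$-norm of a vector-valued function. Concretely, set $g_z(\cdot):=k_z^{\nu,m}(\cdot)=K_z^{\nu,m}/\|K_z^{\nu,m}\|_{2,\nu}$; by Lemma \ref{lem:Apfunction} applied with $\alpha=\nu+\tfrac nr+2m$, the normalizing factor is comparable to $\Delta^{\frac12(\nu+\frac nr)+m}(\Im z)$, so that $\|T(k_z^{\nu,m})\|_{\mathcal{H}}=c_{n,m}\Delta^{\frac12(\nu+\frac nr)+m}(\Im z)\,\|T(K_z^{\nu,m})\|_{\mathcal{H}}$. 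Thus the target integral is, up to constants, $\int_{\mathcal{D}}\|T(K_z^{\nu,m})\|_{\mathcal{H}}^p\,\Delta^{\frac p2(\nu+\frac nr)+pm-\frac{2n}{r}}(\Im z)\,dV(z)$, which for $p=2$ reduces exactly to the expression appearing in Proposition \ref{prop:hilberysmigene} (the weight there is $\Delta^{2m+\nu}(\Im z)$; one checks $\tfrac22(\nu+\tfrac nr)+2m-\tfrac{2n}{r}=\nu+2m$).

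Next I would introduce the map $\Phi\colon T\mapsto \big(z\mapsto T(k_z^{\nu,m})\big)$ and regard it as an operator from the Schatten spaces $\mathcal{S}_p(A_\nu^2(\mathcal{D}),\mathcal{H})$ into the Bochner space $L^p(\mathcal{D},d\lambda;\mathcal{H})$. Proposition \ref{prop:hilberysmigene} says precisely that $\Phi$ is an isometry (up to $C_{n,m}^{1/2}$) from $\mathcal{S}_2$ to $L^2(\mathcal{D},d\lambda;\mathcal{H})$. For part (i), the remaining ingredient is that $\Phi$ is also bounded from $\mathcal{S}_\infty=\mathcal{B}(A_\nu^2(\mathcal{D}),\mathcal{H})$ into $L^\infty(\mathcal{D},d\lambda;\mathcal{H})$: this is immediate since $\|T(k_z^{\nu,m})\|_{\mathcal{H}}\le \|T\|\,\|k_z^{\nu,m}\|_{2,\nu}=\|T\|$ because $k_z^{\nu,m}$ is a unit vector. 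Complex interpolation between $\mathcal{S}_2$ and $\mathcal{S}_\infty$ on the one side, and between $L^2$ and $L^\infty$ on the other, then yields boundedness of $\Phi\colon\mathcal{S}_p\to L^p(\mathcal{D},d\lambda;\mathcal{H})$ for $2\le p<\infty$, which is exactly the inequality in (i). For part (ii) with $0<p<2$, I would instead argue by a direct expansion in an orthonormal basis together with Lemma \ref{lem:schattsufflower}, or equivalently by a duality argument against $\mathcal{S}_{p'}$: write $T$ via an orthonormal basis $\{e_j\}$ of $\mathcal{H}$ and $\{\phi_k\}$ of $A_\nu^2(\mathcal{D})$, use $\|T\|_{\mathcal{S}_p}^p\le\sum_{j,k}|\langle T\phi_k,e_j\rangle_{\mathcal{H}}|^p$, and reproduce each matrix entry $\langle T\phi_k,e_j\rangle_{\mathcal{H}}$ as an integral against $k_z^{\nu,m}$ using the reproducing property of $K_z^{\nu,m}$ under $\Box_z^m$ (this is where Proposition \ref{prop:hilberysmigene}'s proof is reused: $\langle\phi,K_z^{\nu,m}\rangle_\nu=C_{n,m}\Box_z^m\phi(z)$), then apply the triangle inequality in $\ell^p$ (valid since $p\le 1$ only; for $1\le p<2$ one uses instead the $L^2$ case of Proposition \ref{prop:hilberysmigene} combined with $\mathcal{S}_\infty$ bound and interpolation again, now on the ``small'' side $0<p<2$ via Lemma \ref{lem:schattsufflower}).

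The main obstacle I anticipate is making the interpolation rigorous for Schatten classes of operators between two \emph{different} Hilbert spaces $A_\nu^2(\mathcal{D})$ and $\mathcal{H}$, and in particular identifying the complex interpolation space $[\mathcal{S}_2,\mathcal{S}_\infty]_\theta$ with $\mathcal{S}_p$; this is classical but needs to be invoked cleanly, and similarly the interpolation $[L^2,L^\infty]_\theta=L^p$ for vector-valued $L^p$-spaces. An alternative that sidesteps interpolation entirely — and is probably closer to what \cite[Lemma 3.2]{S} does — is to handle (i) by duality: for $2<p<\infty$ the dual exponent $p'$ satisfies $1<p'<2$, and one pairs $\int_{\mathcal{D}}\langle T(k_z^{\nu,m}),F(z)\rangle_{\mathcal{H}}\,d\lambda(z)$ for $F\in L^{p'}(\mathcal{D},d\lambda;\mathcal{H})$ against a suitable finite-rank operator built from $F$, bounding its $\mathcal{S}_{p'}$-norm by $\|F\|_{L^{p'}}$ via part (ii) (already established, since $p'<2$), and then reading off the $L^p$-bound on $T(k_z^{\nu,m})$ from $\|T\|_{\mathcal{S}_p}=\sup\{|\mathrm{Tr}(TG)|:\|G\|_{\mathcal{S}_{p'}}\le1\}$. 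I would adopt whichever of these is shorter in the write-up; both reduce the proposition to Proposition \ref{prop:hilberysmigene}, the $\mathcal{S}_\infty\to L^\infty$ triviality, and Lemma \ref{lem:schattsufflower}, with the determinant-function integrability from Lemma \ref{lem:Apfunction} entering only through the computation of $\|k_z^{\nu,m}\|_{2,\nu}$.
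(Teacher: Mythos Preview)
Your approach to part (i) via complex interpolation between the $\mathcal{S}_2\to L^2$ identity of Proposition~\ref{prop:hilberysmigene} and the trivial $\mathcal{S}_\infty\to L^\infty$ bound is correct and is a legitimate alternative to the argument the paper defers to \cite{S}. That argument, visible in the proof of Proposition~\ref{prop:boxberezin} immediately afterwards, goes instead through the polar decomposition: one writes $\|T\|_{\mathcal{S}_p}^p=\||T|^{p/2}\|_{\mathcal{S}_2}^2$, applies Proposition~\ref{prop:hilberysmigene} to the positive operator $|T|^{p/2}$ acting on $A_\nu^2(\mathcal{D})$, and obtains $\|T\|_{\mathcal{S}_p}^p=C_{n,m}\int_{\mathcal D}\langle |T|^{p}k_z^{\nu,m},k_z^{\nu,m}\rangle_\nu\,d\lambda(z)$. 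The spectral (Jensen) inequality for a unit vector $g$ then gives $\langle |T|^{p}g,g\rangle\ge\langle |T|^{2}g,g\rangle^{p/2}=\|Tg\|_{\mathcal H}^p$ when $p\ge 2$ and the reverse when $p\le 2$, settling (i) and (ii) in one stroke. Your interpolation route for (i) is thus valid but buys nothing additional, since the polar-decomposition argument handles both ranges symmetrically.

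For part (ii) your proposal has a genuine gap. The route through Lemma~\ref{lem:schattsufflower} does not close: after writing $\|T\|_{\mathcal S_p}^p\le\sum_{j,k}|\langle T\phi_k,e_j\rangle_{\mathcal H}|^p$ there is no way to ``reproduce'' the scalar entries $\langle T\phi_k,e_j\rangle_{\mathcal H}$ as integrals against $k_z^{\nu,m}$ so as to produce $\|T k_z^{\nu,m}\|_{\mathcal H}^p$ on the right; the identity $\langle\phi,K_z^{\nu,m}\rangle_\nu=\Box_z^m\phi(z)$ lives on the $A_\nu^2$ side, not on the $\mathcal H$ side where the $j$-sum sits. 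Your fallback of ``interpolation on the small side $0<p<2$'' cannot work either, since interpolating the $p=2$ identity against the $p=\infty$ bound only reaches exponents $p\ge2$. The duality sketch can be salvaged for $1<p<2$ (polarise Proposition~\ref{prop:hilberysmigene} to get $\mathrm{Tr}(S^*T)=C_{n,m}\int\langle Tk_z^{\nu,m},Sk_z^{\nu,m}\rangle_{\mathcal H}\,d\lambda$, then H\"older and part (i) applied to $S$), but this still leaves $0<p\le1$ uncovered and requires an approximation argument you do not supply. The missing ingredient is exactly the moment inequality $\langle |T|^p g,g\rangle\le\|Tg\|_{\mathcal H}^p$ for unit $g$ and $0<p\le2$, inserted into the $\mathcal S_2$-identity for $|T|^{p/2}$; with it, (ii) follows in two lines.
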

We next have the following which is in fact implicit in the proof of the above result in \cite{S}.
\begin{prop}\label{prop:boxberezin} Suppose that $T\in
\mathcal{B}(A_\nu^2(\mathcal{D}))$ is a positive operator. Let $m\ge 0$ be an integer. Then,
\begin{itemize}\item[i)] if $T\in \mathcal{S}_p(A_\nu^2(\mathcal{D}))$ for $1\le p<\infty$, then
$$\int_{\mathcal{D}}|\langle T(k_z^{\nu,m}),k_z^{\nu,m}\rangle_\nu |^pd\lambda(z)\le C_{n,m}||T||_{\mathcal{S}_p(A_\nu^2(\mathcal{D}))}^p.$$
\item[ii)] If for $0< p\le 1$,
$$\int_{\mathcal{D}}|\langle T(k_z^{\nu,m}),k_z^{\nu,m}\rangle_\nu |^pd\lambda(z) <\infty,$$
then $T\in \mathcal{S}_p(A_\nu^2(\mathcal{D}))$.
Moreover,$$||T||_{\mathcal{S}_p(A_\nu^2(\mathcal{D}))}^p\le
C_{n,m}\int_{\mathcal{D}}|\langle T(k_z^{\nu,m}),k_z^{\nu,m}\rangle_\nu |^pd\lambda(z).$$\end{itemize}
\end{prop}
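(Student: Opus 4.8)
The plan is to reduce the statement to Proposition~\ref{prop:hilberysmigene} and Proposition~\ref{prop:genebox}, applied not to $T$ itself but to its positive square root $T^{1/2}$, regarded as a bounded operator from $A_\nu^2(\mathcal{D})$ into the Hilbert space $\mathcal{H}=A_\nu^2(\mathcal{D})$ (so that $\mathcal{S}_p(A_\nu^2(\mathcal{D}),\mathcal{H})$ is simply $\mathcal{S}_p(A_\nu^2(\mathcal{D}))$). Two elementary facts make this work. First, since $T\ge 0$,
$$\langle T(k_z^{\nu,m}),k_z^{\nu,m}\rangle_\nu=\|T^{1/2}(k_z^{\nu,m})\|_{2,\nu}^2\qquad(z\in\mathcal{D}),$$
so the integrand in the statement equals $\|T^{1/2}(k_z^{\nu,m})\|_{2,\nu}^{2p}$. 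Second, the singular numbers of $T^{1/2}$ are the square roots of the eigenvalues of $T$, so that $T\in\mathcal{S}_p(A_\nu^2(\mathcal{D}))$ if and only if $T^{1/2}\in\mathcal{S}_{2p}(A_\nu^2(\mathcal{D}))$, and in that case $\|T\|_{\mathcal{S}_p}^p=\|T^{1/2}\|_{\mathcal{S}_{2p}}^{2p}$. Boundedness of $T^{1/2}$ is immediate from boundedness and positivity of $T$, and whenever $T^{1/2}$ is found to lie in some $\mathcal{S}_q$ it is automatically compact, hence so is $T=(T^{1/2})^2$.

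For part (i), I would assume $T\in\mathcal{S}_p$ with $1\le p<\infty$, so that $T^{1/2}\in\mathcal{S}_{2p}$ with $2p\ge 2$, and apply the appropriate bound to $S=T^{1/2}$: Proposition~\ref{prop:hilberysmigene} when $p=1$ (then $2p=2$, giving an equality) and Proposition~\ref{prop:genebox}(i) when $p>1$ (then $2p>2$). Combining with the identities above,
$$\int_{\mathcal{D}}\bigl|\langle T(k_z^{\nu,m}),k_z^{\nu,m}\rangle_\nu\bigr|^p\,d\lambda(z)=\int_{\mathcal{D}}\|T^{1/2}(k_z^{\nu,m})\|_{2,\nu}^{2p}\,d\lambda(z)\le C_{n,m}\|T^{1/2}\|_{\mathcal{S}_{2p}}^{2p}=C_{n,m}\|T\|_{\mathcal{S}_p}^p.$$

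For part (ii), I would assume $0<p\le 1$ and that the integral is finite, that is $\int_{\mathcal{D}}\|T^{1/2}(k_z^{\nu,m})\|_{2,\nu}^{2p}\,d\lambda(z)<\infty$. Since $T^{1/2}$ is bounded, Proposition~\ref{prop:hilberysmigene} (when $p=1$, so $2p=2$) or Proposition~\ref{prop:genebox}(ii) with exponent $2p\in(0,2)$ (when $0<p<1$) yields $T^{1/2}\in\mathcal{S}_{2p}(A_\nu^2(\mathcal{D}))$ together with
$$\|T^{1/2}\|_{\mathcal{S}_{2p}}^{2p}\le C_{n,m}\int_{\mathcal{D}}\|T^{1/2}(k_z^{\nu,m})\|_{2,\nu}^{2p}\,d\lambda(z).$$
By the norm identity this gives $T\in\mathcal{S}_p(A_\nu^2(\mathcal{D}))$ and $\|T\|_{\mathcal{S}_p}^p=\|T^{1/2}\|_{\mathcal{S}_{2p}}^{2p}\le C_{n,m}\int_{\mathcal{D}}|\langle T(k_z^{\nu,m}),k_z^{\nu,m}\rangle_\nu|^p\,d\lambda(z)$, as asserted.

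The argument is essentially bookkeeping once the passage to $T^{1/2}$ is in place; the only points requiring a little care are verifying that the Schatten norms transform as claimed under $T\mapsto T^{1/2}$ (i.e.\ that the singular numbers are exactly $\{\lambda_j^{1/2}\}$), and handling the endpoint $p=1$, where $2p=2$ lies outside the ranges of Proposition~\ref{prop:genebox} and one must instead invoke the Hilbert--Schmidt identity of Proposition~\ref{prop:hilberysmigene}. I do not expect a genuine obstacle here.
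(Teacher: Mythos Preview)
Your proof is correct. Both you and the paper reduce to the Hilbert--Schmidt identity of Proposition~\ref{prop:hilberysmigene} via the square root $T^{1/2}$, which gives the case $p=1$ as an equality. For general $p$ the routes diverge slightly: you stay with $T^{1/2}$ and invoke Proposition~\ref{prop:genebox} at exponent $2p$, whereas the paper applies the trace identity to $T^p$ (using $\|T\|_{\mathcal{S}_p}^p=Tr(T^p)$) and then appeals to the operator--Jensen inequalities $\langle Tg,g\rangle_\nu^{\,p}\le\langle T^pg,g\rangle_\nu$ for $p\ge1$ and the reverse for $0<p\le1$, valid for unit vectors $g$ (note the $k_z^{\nu,m}$ are normalized). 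Your version is more self-contained within the paper, since it quotes only already-stated propositions and avoids importing the Jensen inequality from \cite{Zhu1}; the paper's version makes the pointwise comparison between $\langle T^p\cdot,\cdot\rangle$ and $\langle T\cdot,\cdot\rangle^p$ explicit. The two are close enough that the paper itself remarks the proposition is ``implicit in the proof'' of Proposition~\ref{prop:genebox}, which is precisely what your argument unwinds.
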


\begin{proof}
From the proof of Proposition \ref{prop:hilberysmigene} , we have that if
$T\in \mathcal{S}_1(A_\nu^2(\mathcal{D}))$ is a
positive  operator, then
$$Tr(T)=\|T^{1/2}\|_{\mathcal{S}_2}^2=C_{n,m}\int_{\mathcal{D}}|\langle T(k_z^{\nu,m}),k_z^{\nu,m}\rangle_\nu |d\lambda(z).$$

Recalling that
$||T||_{\mathcal{S}_p}^p=Tr(T^p)$, the proof follows from the fact that for any unit
vector(see \cite{Zhu1}) in $g\in L^2(\mathcal{D})$, we have
$$\langle T g,g\rangle_\nu^{p}\le \,\,\,\langle T^pg,g\rangle_\nu,\,\,\,\,\, \textrm{if}\,\,\,\,\, p\ge 1$$ and
$$\langle T^pg,g\rangle_\nu\le \,\,\, \langle Tg,g\rangle_\nu^{p}\,\,\,\,\,\, \textrm{if}\,\,\,\,\, 0< p\le 1.$$
\end{proof}
\subsection{Proof of Theorem \ref{thm:main2}}
We start by observing that taking $T=T_\mu$ in Proposition \ref{prop:boxberezin}, we obtain the following reproducing kernel thesis for $T_\mu$.
\begin{cor}\label{cor:boxberezin} Let $\mu$ be a positive measure on $\mathcal{D}$, and let $m\ge 0$ be an integer. Then the  following assertions hold.
\begin{itemize}\item[i)] If $T_\mu\in \mathcal{S}_p(A_\nu^2(\mathcal{D}))$ for $1\le p<\infty$, then
$$\int_{\mathcal{D}}\langle T_\mu(k_z^{\nu,m}),k_z^{\nu,m}\rangle_\nu ^pd\lambda(z)\le C_{n,m}||T_\mu||_{\mathcal{S}_p(A_\nu^2(\mathcal{D}))}^p.$$
\item[ii)] If for $0< p\le 1$,
$$\int_{\mathcal{D}}\langle T_\mu(k_z^{\nu,m}),k_z^{\nu,m}\rangle_\nu ^pd\lambda(z) <\infty,$$
then $T_\mu\in \mathcal{S}_p(A_\nu^2(\mathcal{D}))$.
Moreover,$$||T_\mu||_{\mathcal{S}_p(A_\nu^2(\mathcal{D}))}^p\le
C_{n,m}\int_{\mathcal{D}}\langle T_\mu(k_z^{\nu,m}),k_z^{\nu,m}\rangle_\nu ^pd\lambda(z).$$\end{itemize}
\end{cor}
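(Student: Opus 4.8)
The plan is to obtain Corollary \ref{cor:boxberezin} as an immediate specialization of Proposition \ref{prop:boxberezin}. First I would verify that the Toeplitz operator $T_\mu$ is a legitimate input for that proposition, i.e. that it is a bounded positive operator on $A_\nu^2(\mathcal{D})$ whenever it lies in some Schatten class (for part i)) or whenever the integral in part ii) is finite. Positivity is clear from the defining formula \eqref{defToeplitz}: for $f\in A_\nu^2(\mathcal{D})$ with compact support one has $\langle T_\mu f,f\rangle_\nu=\int_{\mathcal{D}}|f(w)|^2\,d\mu(w)\ge 0$ by the reproducing property of $K_\nu$, and this extends to all of $A_\nu^2(\mathcal{D})$ by density once boundedness is in hand. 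Boundedness in case i) is part of the hypothesis $T_\mu\in\mathcal{S}_p$; in case ii) it follows because the finiteness of $\int_{\mathcal{D}}\langle T_\mu k_z^{\nu,m},k_z^{\nu,m}\rangle_\nu^p\,d\lambda(z)$ already forces, via Proposition \ref{prop:boxberezin} ii) applied formally, membership in $\mathcal{S}_p\subset\mathcal{B}(A_\nu^2(\mathcal{D}))$ — so I would instead first note that the averaging-function machinery (Lemma \ref{lem:integraldiscretizationAverBer} together with the pointwise domination of $\hat\mu_\beta$ by the Berezin-type quantity) gives that $\mu$ is a Carleson-type measure, hence $T_\mu$ is bounded, and then invoke the proposition cleanly.

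Second, I would simply substitute $T=T_\mu$ and $\mathcal{H}=A_\nu^2(\mathcal{D})$ into Proposition \ref{prop:boxberezin}, observing that
$$
\langle T_\mu(k_z^{\nu,m}),k_z^{\nu,m}\rangle_\nu=\int_{\mathcal{D}}|k_z^{\nu,m}(w)|^2\,d\mu(w)=\tilde\mu^m(z)\ge 0,
$$
so that the absolute values $|\langle T(k_z^{\nu,m}),k_z^{\nu,m}\rangle_\nu|^p$ appearing in the proposition may be replaced by $\langle T_\mu(k_z^{\nu,m}),k_z^{\nu,m}\rangle_\nu^p$. Part i) of the proposition then yields part i) of the corollary for $1\le p<\infty$, and part ii) yields part ii) for $0<p\le 1$, including the norm estimate $\|T_\mu\|_{\mathcal{S}_p}^p\le C_{n,m}\int_{\mathcal{D}}\tilde\mu^m(z)^p\,d\lambda(z)$.

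I do not expect any serious obstacle here, since the statement is genuinely a corollary: the content is entirely in Proposition \ref{prop:boxberezin}, which in turn rests on the Hilbert–Schmidt identity of Proposition \ref{prop:hilberysmigene} and the operator-convexity inequalities $\langle Tg,g\rangle_\nu^p\le\langle T^pg,g\rangle_\nu$ for $p\ge 1$ and the reverse for $0<p\le 1$. The only point requiring a word of care is the direction of the inequality in case ii) when $0<p\le 1$: one must check that the finiteness assumption is compatible with $T_\mu$ being bounded in the first place, so that $T_\mu^p$ makes sense and $\mathrm{Tr}(T_\mu^p)=\|T_\mu\|_{\mathcal{S}_p}^p$; as indicated above this is handled by first establishing the Carleson property of $\mu$. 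With that remark, the proof is one line: apply Proposition \ref{prop:boxberezin} with $T=T_\mu$ and use $\langle T_\mu k_z^{\nu,m},k_z^{\nu,m}\rangle_\nu=\tilde\mu^m(z)$.
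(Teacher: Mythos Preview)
Your proposal is correct and matches the paper's approach exactly: the paper introduces Corollary \ref{cor:boxberezin} with the single sentence ``taking $T=T_\mu$ in Proposition \ref{prop:boxberezin}, we obtain the following,'' without spelling out the positivity/boundedness verifications you add. Your extra care about those hypotheses is fine but not something the paper dwells on.
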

We next prove the following sufficient condition.
\begin{lem}\label{lem:suffbergrand} Let $1\le p<\infty$, and let $m\ge 0$ be an integer. Assume that $\mu$ is a positive measure on $\mathcal{D}$, . Then if the Toeplitz operator $T_\mu$ satisfies
$$\int_{\mathcal{D}}\langle T_\mu(k_z^{\nu,m}),k_z^{\nu,m}\rangle_\nu^pd\lambda(z)<\infty,$$
then $T_\mu\in \mathcal{S}_p(A_\nu^2(\mathcal{D}))$. Moreover,$$||T_\mu||_{\mathcal{S}_p(A_\nu^2(\mathcal{D}))}^p\le
C_{n,m}\int_{\mathcal{D}}\langle T_\mu(k_z^{\nu,m}),k_z^{\nu,m}\rangle_\nu ^pd\lambda(z).$$

\end{lem}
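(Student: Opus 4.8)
The plan is to reduce, in the range $p\ge 1$, to the lattice characterization of $\mathcal{S}_p(A_\nu^2(\mathcal{D}))$ established in \cite{NS}, that is, to Theorem \ref{theo:SchattenToep}; the complementary range $0<p\le 1$ is already contained in Corollary \ref{cor:boxberezin}(ii). A detour of this kind is forced because for $p>1$ there is no workable ``sum of diagonal entries'' sufficient condition, so one must use the equivalence (i)$\Leftrightarrow$(ii) of Theorem \ref{theo:SchattenToep}. Everything then comes down to the single estimate
$$\sum_{j}\big(\hat\mu_\delta(\zeta_j)\big)^p\ \lesssim\ \int_{\mathcal{D}}\tilde\mu^m(z)^p\,d\lambda(z)$$
for one sufficiently small $\delta$ and one $\delta$-lattice $\{\zeta_j\}$: granting it, $\{\hat\mu_\delta(\zeta_j)\}\in l^p$, hence $T_\mu\in\mathcal{S}_p(A_\nu^2(\mathcal{D}))$ by Theorem \ref{theo:SchattenToep}, and the quantitative form of the implication (ii)$\Rightarrow$(i) (see its proof in \cite{NS}) gives $\|T_\mu\|_{\mathcal{S}_p}^p\lesssim\sum_j(\hat\mu_\delta(\zeta_j))^p\lesssim C_{n,m}\int_{\mathcal{D}}\tilde\mu^m(z)^p\,d\lambda(z)$.

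To prove the displayed estimate I would first write $k_z^{\nu,m}$ explicitly. Since $\Box_z^mK_\nu(z,w)=C_{\nu,m}K_{\nu+m}(z,w)$ (see \cite{BBPR}) and $K_{\nu+m}(z,\cdot)\in L_\nu^2(\mathcal{D})$ because $\nu+2m>\frac nr-1$, Lemma \ref{lem:Apfunction} yields $\|K_z^{\nu,m}(z,\cdot)\|_{2,\nu}^2\asymp\Delta^{-(\nu+2m+\frac nr)}(\Im z)$, hence
$$|k_z^{\nu,m}(w)|^2\ \asymp\ \big|\Delta^{-(\nu+m+\frac nr)}\big(\tfrac{z-\bar w}{i}\big)\big|^2\,\Delta^{\nu+2m+\frac nr}(\Im z).$$
Next, fix $\delta\in(0,1)$ small and a $\delta$-lattice $\{\zeta_j\}$ as in Lemma \ref{lem:covering}, with pairwise disjoint balls $B_j'=B_{\delta/2}(\zeta_j)$ and covering balls $B_j=B_\delta(\zeta_j)$. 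For $z\in B_j'$ and $w\in B_j$ one has $d(z,w)<\frac{3\delta}{2}$, and comparing $\Delta(\frac{z-\bar w}{i})$ with $\Delta(\Im z)$ and with $\Delta(\Im\zeta_j)$ on Bergman balls (as in the proof of Lemma \ref{kor}, via \cite[Corollary 3.4]{BBGNPR} and Lemma \ref{lem:Koranyi}) gives $|k_z^{\nu,m}(w)|^2\gtrsim\Delta^{-(\nu+\frac nr)}(\Im\zeta_j)\asymp V_\nu(B_j)^{-1}$, with a constant depending only on $\delta$ and $m$. Hence, for every $z\in B_j'$,
$$\tilde\mu^m(z)\ \ge\ \int_{B_j}|k_z^{\nu,m}(w)|^2\,d\mu(w)\ \gtrsim\ \frac{\mu(B_j)}{V_\nu(B_j)}\ =\ \hat\mu_\delta(\zeta_j).$$

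It remains to integrate this. Since the $B_j'$ are pairwise disjoint and $\lambda(B_j')\asymp 1$ independently of $j$ (by $G(\mathcal{D})$-invariance of $d\lambda$),
$$\int_{\mathcal{D}}\tilde\mu^m(z)^p\,d\lambda(z)\ \ge\ \sum_j\int_{B_j'}\tilde\mu^m(z)^p\,d\lambda(z)\ \gtrsim\ \sum_j\big(\hat\mu_\delta(\zeta_j)\big)^p ,$$
which is the desired estimate; by Lemma \ref{lem:variationoflattice} the finiteness of this $l^p$-sum is insensitive to the particular small $\delta$ and lattice, so assertion (ii) of Theorem \ref{theo:SchattenToep} holds and the proof closes as indicated above.

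The step I expect to be the main obstacle is the lower bound $|k_z^{\nu,m}(w)|^2\gtrsim V_\nu(B_j)^{-1}$ for $z,w$ lying in a common Bergman ball: one has to ensure that the constant produced by the determinant comparison on balls is uniform in $j$ and that $\delta$ may be shrunk to make this comparison effective, exactly as in the second half of Lemma \ref{kor}. A minor additional point is that the \emph{norm} bound in the statement — rather than plain membership — requires the quantitative version of (ii)$\Rightarrow$(i) in Theorem \ref{theo:SchattenToep}, which is implicit in its proof in \cite{NS} and could, if desired, be reproved here by the discretization argument used for Lemma \ref{theo:main11}.
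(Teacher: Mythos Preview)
Your argument is correct but proceeds quite differently from the paper. You reduce to the lattice criterion of Theorem \ref{theo:SchattenToep} by establishing the pointwise lower bound $\tilde{\mu}^m(z)\gtrsim\hat{\mu}_\delta(\zeta_j)$ for $z\in B_j'$ and then integrating over the disjoint balls; this is a clean reduction, and the extension of Lemma \ref{kor} to $k_z^{\nu,m}$ that you flag as the main obstacle is indeed routine.

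The paper instead gives a direct proof that avoids Theorem \ref{theo:SchattenToep} altogether. Contrary to your remark, for a positive operator and $p\ge 1$ there \emph{is} a workable diagonal criterion: $T\in\mathcal{S}_p$ is equivalent to $\sum_k\langle Te_k,e_k\rangle^p<\infty$ for \emph{every} orthonormal set $\{e_k\}$ (this is recalled in \S2.3). The paper bounds this sum uniformly in $\{e_k\}$ as follows. Combining Lemma \ref{lem:meanvalue} with the lower bound of Lemma \ref{kor} yields the pointwise estimate $|e_k(z)|^2\lesssim\int_{\mathcal{D}}|e_k(w)|^2|k_z^{\nu,m}(w)|^2\,dV_\nu(w)$; integrating against $d\mu$ and using Fubini gives $\langle T_\mu e_k,e_k\rangle\lesssim\int|e_k(w)|^2\,\tilde{\mu}^m(w)\,dV_\nu(w)$. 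Since $\|e_k\|_{2,\nu}=1$, Jensen's inequality (here is where $p\ge 1$ enters) turns the $p$-th power into $\int|e_k(w)|^2\,\tilde{\mu}^m(w)^p\,dV_\nu(w)$, and summing over $k$ together with $\sum_k|e_k(w)|^2\le\|K_w^\nu\|_{2,\nu}^2\asymp\Delta^{-(\nu+\frac nr)}(\Im w)$ converts $dV_\nu$ into $d\lambda$, finishing the proof. This route is shorter, self-contained, and delivers the norm inequality immediately, without invoking the quantitative form of (ii)$\Rightarrow$(i) from \cite{NS}; your approach, on the other hand, makes the link to the lattice condition explicit, which is conceptually pleasant once Theorem \ref{theo:SchattenToep} is already available.
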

\begin{proof}
As $p\ge 1$, we only need to prove that there is positive constant $C$ such that for any orthonormal sequence $\{e_k\}$ on $A_\nu^2(\mathcal{D})$,
$$\sum_k\langle T_\mu e_k,e_k\rangle_\nu^p\le C\int_{\mathcal{D}}\langle T_\mu(k_z^{\nu,m}),k_z^{\nu,m}\rangle_\nu^pd\lambda(z).$$
We recall our notation
$$\tilde{\mu}^m(z):=\langle T_\mu k_z^{\nu,m},k_z^{\nu,m}\rangle_\nu=\int_{\mathcal{D}}|k_z^{\nu,m}(w)|^2d\mu(w).$$

We start  by noting that by Lemma \ref{lem:meanvalue}, we have
$$|e_k(z)|^2\le C\delta^{-n}\int_{B_\delta(z)}|e_k(w)|^2d\lambda(w).$$
It follows from this and Lemma \ref{kor} that
$$|e_k(z)|^2\le C\int_{\mathcal{D}}|e_k(w)|^2|k_z^{\nu,m}(w)|^2dV_\nu(w)$$
Hence
\Beas
\langle T_\mu e_k,e_k\rangle_\nu &=& \int_{\mathcal{D}}|e_k(z)|^2d\mu(z)\\ &\le& \int_{\mathcal{D}}|e_k(w)|^2\tilde{\mu}^m(w)dV_\nu(w)
\Eeas
and so using H\"older's inequality, that the $e_k$s are orthonormal and $$\sum_k|e_k(z)|^2\le \|K_z^\nu\|_{2,\nu}^2,$$ we obtain
\Beas
\sum_k\langle T_\mu e_k,e_k\rangle_\nu^p &\le& \sum_k\left(\int_{\mathcal{D}}|e_k(w)|^2\tilde{\mu}^m(w)dV_\nu(w)\right)^p\\ &\le& \int_{\mathcal{D}}\left(\tilde{\mu}^m(w)\right)^p\left(\sum_k|e_k(w)|^2\right)dV_\nu(w)\\  &\le& \int_{\mathcal{D}}\left(\tilde{\mu}^m(w)\right)^p\|K_w^\nu\|_{2,\nu}^2dV_\nu(w)\\ &\lesssim&  \int_{\mathcal{D}}\left(\tilde{\mu}^m(w)\right)^p d\lambda(w).
\Eeas
The proof is complete.
\end{proof}

We now prove the following necessary condition.
\begin{lem}\label{lem:nessberpetit} Let $m\ge 0$ be an integer such that $\max\{\frac{2\frac nr-1}{\nu+\frac nr+2m},\frac{\frac nr-1}{\nu+\frac nr}\}< p\le 1$. Assume that $\mu$ is a positive measure on $\mathcal{D}$. Then if the Toeplitz operator $T_\mu$ belongs to the Schatten class $\mathcal{S}_p(A_\nu^2(\mathcal{D})$, then
$$\int_{\mathcal{D}}\langle T_\mu(k_z^{\nu,m}),k_z^{\nu,m}\rangle_\nu^pd\lambda(z)<\infty.$$
Moreover,$$
\int_{\mathcal{D}}\langle T_\mu(k_z^{\nu,m}),k_z^{\nu,m}\rangle_\nu^pd\lambda(z)\lesssim ||T_\mu||_{\mathcal{S}_p(A_\nu^2(\mathcal{D}))}^p.$$

\end{lem}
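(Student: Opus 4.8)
The plan is to reduce the statement to the discrete $l^p$-estimate for the averages $\{\hat\mu_\delta(\zeta_j)\}$ already available from Lemma \ref{theo:main12}, via Lemma \ref{lem:integraldiscretizationAverBer} applied with the shifted weight $\nu+2m$. First I would observe that, just as in Lemma \ref{lem:integraldiscretizationAverBer}, the quantity $\tilde\mu^m(z)=\langle T_\mu k_z^{\nu,m},k_z^{\nu,m}\rangle_\nu$ is, up to constants, a Berezin-type transform of $\mu$ against the normalized higher-order kernel $k_z^{\nu,m}$. Since $\Box_z^m K_\nu(z,w)=C_\nu K_{\nu+2m}(z,w)$ (iterating $\Box_zK_\nu=C_\nu K_{\nu+1}$), one has $K_z^{\nu,m}(w)=C_{n,m}K_{\nu+2m}(z,w)$ and hence $|k_z^{\nu,m}(w)|^2\asymp |K_{\nu+2m}(z,w)|^2\Delta^{\nu+\frac nr+2m}(\Im z)$, exactly the kernel defining the Berezin transform $\widetilde{\mu}^{(\nu+2m)}$ attached to the weight $\nu+2m$. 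Thus $\tilde\mu^m\asymp\widetilde{\mu}^{(\nu+2m)}$, and the claim $\tilde\mu^m\in L^p(\mathcal D,d\lambda)$ becomes the statement that the $(\nu+2m)$-Berezin transform of $\mu$ lies in $L^p(d\lambda)$.

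Next I would invoke Lemma \ref{lem:integraldiscretizationAverBer} with $\nu$ replaced by $\nu+2m$: since the hypothesis $p>\frac{2\frac nr-1}{\nu+\frac nr+2m}$ is precisely the cut-off $p>\frac{2\frac nr-1}{(\nu+2m)+\frac nr}$ required there, the three conditions ``$\widehat\mu_\beta^{(\nu+2m)}\in L^p(d\lambda)$'', ``$\{\widehat\mu_\delta^{(\nu+2m)}(\zeta_j)\}\in l^p$'', and ``$\widetilde{\mu}^{(\nu+2m)}\in L^p(d\lambda)$'' are equivalent. So it suffices to show $\{\widehat\mu_\delta^{(\nu+2m)}(\zeta_j)\}\in l^p$. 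But $\widehat\mu_\delta^{(\nu+2m)}(\zeta_j)=\frac{\mu(B_\delta(\zeta_j))}{\Delta^{\nu+\frac nr+2m}(\Im\zeta_j)}=\widehat\mu_\delta(\zeta_j)\,\Delta^{-2m}(\Im\zeta_j)$, so the two averages differ only by the (lattice-independent) factor $\Delta^{-2m}(\Im\zeta_j)$; a cleaner route is simply to re-run the argument of Lemma \ref{theo:main12} with $f_k(z)=K_\sigma(z,\zeta_k)\Delta^{\sigma+\frac nr-\frac12(\nu+\frac nr)-m}(\Im\zeta_k)$, i.e. with the atomic decomposition in the weight $\nu$ but the kernel powers tuned so that $S$ is still bounded and surjective on $A_\nu^2(\mathcal D)$ and $\langle S^*T_\mu S e_k,e_k\rangle\asymp\widehat\mu_\delta^{(\nu+2m)}(\zeta_k)$. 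The role of the second hypothesis $p>\frac{\frac nr-1}{\nu+\frac nr}$ is to guarantee (via Lemma \ref{lem:Apfunction}) that the change-of-weight atomic decomposition of Theorem \ref{theo:atomdecompo} is available and that the relevant kernel powers are $L^p$-integrable when we pass from point values to ball integrals, exactly as in the proof of Lemma \ref{theo:main11}.

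I would then assemble the pieces: $T_\mu\in\mathcal S_p$ gives, by Lemma \ref{theo:main12} (whose argument adapts verbatim with the tuned exponents), $\sum_j\big(\widehat\mu_\delta^{(\nu+2m)}(\zeta_j)\big)^p\lesssim \|T_\mu\|_{\mathcal S_p}^p$; then Lemma \ref{lem:integraldiscretizationAverBer} in the weight $\nu+2m$ upgrades this to $\int_{\mathcal D}\big(\widetilde\mu^{(\nu+2m)}\big)^p\,d\lambda\lesssim\|T_\mu\|_{\mathcal S_p}^p$; and finally $\widetilde\mu^{(\nu+2m)}\asymp\tilde\mu^m$ yields the asserted bound $\int_{\mathcal D}\langle T_\mu k_z^{\nu,m},k_z^{\nu,m}\rangle_\nu^p\,d\lambda(z)\lesssim\|T_\mu\|_{\mathcal S_p(A_\nu^2(\mathcal D))}^p$. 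The main obstacle I anticipate is the bookkeeping in the off-diagonal estimate: one must check that Lemma \ref{lem:sumdeltafunctestim} still applies with $\alpha=p(\sigma+\frac nr)$ and $\beta=p(\sigma+\frac nr-m)$ (or the analogous pair produced by the $m$-shifted kernels), i.e. that the three inequalities $\alpha>2\frac nr-1$, $\beta>2\frac nr-1$, $\alpha>\beta+\frac nr-1$ hold for $\sigma$ large — which is where one genuinely uses $p$ being bounded below, and where the constants must be tracked to keep the off-diagonal contribution smaller than the diagonal one, exactly as in Lemma \ref{theo:main12}.
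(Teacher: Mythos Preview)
Your reduction has a concrete error at the very first step: iterating $\Box_z K_\nu=C_\nu K_{\nu+1}$ gives $\Box_z^m K_\nu=C\,K_{\nu+m}$, not $K_{\nu+2m}$. Consequently $|k_z^{\nu,m}(w)|^2\asymp |K_{\nu+m}(z,w)|^2\,\Delta^{\nu+2m+\frac nr}(\Im z)$: the kernel comes from the weight $\nu+m$ while the normalizing power is $\nu+2m+\frac nr$ (because the normalization is taken in $A_\nu^2$, not in $A_{\nu+m}^2$). Hence $\tilde\mu^m(z)\asymp\Delta^m(\Im z)\,\widetilde\mu^{(\nu+m)}(z)$, which is \emph{not} the Berezin transform for any single weight. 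So Lemma~\ref{lem:integraldiscretizationAverBer} cannot be invoked as a black box at the shifted weight, and the whole chain $\tilde\mu^m\asymp\widetilde\mu^{(\nu+2m)}\Leftrightarrow\{\hat\mu_\delta^{(\nu+2m)}(\zeta_j)\}\in l^p$ collapses.

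Your fallback ``cleaner route'' also fails: if you set $f_k(z)=K_\sigma(z,\zeta_k)\Delta^{\sigma+\frac nr-\frac12(\nu+\frac nr)-m}(\Im\zeta_k)$, then $\|f_k\|_{2,\nu}^2\asymp\Delta^{-2m}(\Im\zeta_k)$, which is unbounded in $k$. So the operator $S$ sending $e_k\mapsto f_k$ is not bounded on $A_\nu^2(\mathcal D)$, and Lemma~\ref{lem:schattenviaorthoper} is unavailable; you cannot re-run Lemma~\ref{theo:main12} with these atoms. The paper avoids all of this by never passing to the $(\nu+2m)$-averages. It uses Lemma~\ref{theo:main12} exactly as stated to get $\{\hat\mu_\delta(\zeta_j)\}\in l^p$ (the $\nu$-averages), and then proves directly, by the same discretize-and-integrate argument as in (ii)$\Rightarrow$(iii) of Lemma~\ref{lem:integraldiscretizationAverBer}, that
\[
\int_{\mathcal D}\big(\tilde\mu^m(z)\big)^p\,d\lambda(z)\;\lesssim\;\sum_k\big(\hat\mu_\delta(\zeta_k)\big)^p.
\]
The key point you are missing is the kernel computation: by Lemma~\ref{lem:Apfunction},
\[
\int_{\mathcal D}|K_{\nu+m}(z,\zeta_k)|^{2p}\,\Delta^{p(\nu+2m+\frac nr)-2\frac nr}(\Im z)\,dV(z)\;=\;C\,\Delta^{-p(\nu+\frac nr)}(\Im\zeta_k),
\]
and it is precisely here that both hypotheses $p(\nu+\frac nr+2m)>2\frac nr-1$ and $p(\nu+\frac nr)>\frac nr-1$ are used. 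The right-hand power $-p(\nu+\frac nr)$ is exactly what converts $\mu(B_k)^p$ into $\hat\mu_\delta(\zeta_k)^p$ with the original weight $\nu$, so no modified atomic decomposition is needed.
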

\begin{proof}
Assume that the Toeplitz operator $T_\mu$ belongs to the Schatten class $\mathcal{S}_p(A_\nu^2(\mathcal{D})$. Then by Lemma \ref{theo:main11}, this implies that for any $\delta$-lattice $\{\zeta_k\}$ of points of $\mathcal{D}$, the sequence $\{\hat{\mu}_\delta(\zeta_k)\}$ belongs to $l^p$ with  $$\sum_k\left(\hat{\mu}_\delta(\zeta_k)\right)^p\lesssim \|T_\mu\|_{\mathcal{S}_p}^p.$$
It follows that to prove the above lemma, it is enough to prove that there is positive constant $C$ such that for any $\delta$-lattice $\{\zeta_k\}$ of points of $\mathcal{D}$,
$$\int_{\mathcal{D}}\langle T_\mu(k_z^{\nu,m}),k_z^{\nu,m}\rangle_\nu^pd\lambda(z)\le C\sum_k\left(\hat{\mu}_\delta(\zeta_k)\right)^p.$$
Recalling that $0<p<1$ and using Lemma \ref{kor}, we first obtain
\Beas
L &:=& \int_{\mathcal{D}}\left(\tilde{\mu}^m(z)\right)^pd\lambda(z)\\ &=& \int_{\mathcal{D}}\left(\int_{\mathcal{D}}|k_z^{\nu,m}(w)|^2d\mu(w)\right)^pd\lambda(z)\\ &\le& \int_{\mathcal{D}}\left(\sum_k\int_{B_k}|k_z^{\nu,m}(w)|^2d\mu(w)\right)^pd\lambda(z)\\ &\le& C\int_{\mathcal{D}}\left(\sum_k|k_z^{\nu,m}(\zeta_k)|^2\mu(B_k)\right)^pd\lambda(z)\\ &\le& C\int_{\mathcal{D}}\left(\sum_k|K_z^{\nu,m}(\zeta_k)|^2\mu(B_k)\right)^p\frac{\Delta^{p(\nu+2m+\frac nr)}(\Im z)}{\Delta^{2\frac nr}(\Im z)}dV(z)\\ &\le& C\sum_k\left(\mu(B_k)\right)^p\int_{\mathcal{D}}|K_z^{\nu,m}(\zeta_k)|^{2p}\frac{\Delta^{p(\nu+2m+\frac nr)}(\Im z)}{\Delta^{2\frac nr}(\Im z)}dV(z).
\Eeas
The condition on $p$ and Lemma \ref{lem:Apfunction} give us
$$\int_{\mathcal{D}}|K_z^{\nu,m}(\zeta_k)|^{2p}\Delta^{p(\nu+2m+\frac nr)-2\frac nr}dV(z)=C\Delta^{-p(\nu+\frac nr)}(\Im \zeta_k).$$
We then conclude that
\Beas
\int_{\mathcal{D}}\left(\tilde{\mu}^m(z)\right)^pd\lambda(z)&\le& C\sum_k\left(\mu(B_k)\right)^p\Delta^{-p(\nu+\frac nr)}(\Im \zeta_k)\\ &\asymp& \sum_k\left(\hat{\mu}_\delta(\zeta_k)\right)^p.
\Eeas
The proof if complete.
\end{proof}
Theorem \ref{thm:main2} clearly follows from Corollary \ref{cor:boxberezin}, Lemma \ref{lem:suffbergrand} and Lemma \ref{lem:nessberpetit}.

\section{Application to Ces\`aro-type operators}

 We consider the following equivalence class $$\mathcal {N}_n := \{F \in
\mathcal H (\mathcal D) : \Box^n F = 0 \}$$ and set
$$\mathcal{H}_n(\mathcal D)=\mathcal{H}(\mathcal D)/ \mathcal N_n.$$
For $g\in \mathcal {H}(\mathcal D)$, we define the operator $T_g$
 as follows: for $f\in \mathcal {H}(\mathcal D)$, $T_gf$ is the equivalence class of the solutions of the equation
 $$\Box^{n}F=f\Box^{n}g.$$ The operator $T_g$ was called in \cite{NS} Ces\`aro-type operator and it was remarked in the same paper that its definition does not depend on the choice of the representative of
the class of the symbol.

We consider in this part, criteria for Schatten class membership of the Ces\`aro-type operator above on the weighted Bergman space $A_\nu^2(\mathcal D)$. In fact a characterization of Schatten classes for this operator was obtain in \cite{NS} for the range $2\le p\le \infty$. Our aim here is to extend this result to the range $1\le p<2$. We refer to \cite{Constantin, Luecking2, Zhu2, Zhu3} for the corresponding results on some classical domains.

 Let us recall that the Besov space $\mathcal {B}^p(\mathcal D)$ is the subset of $\mathcal {H}_n(\mathcal D)$ consisting of functions $f$ such that
$\Da^n\Box^nf\in L^p(\mathcal {D}, d\lambda)=L^p(\mathcal {D}, \frac{dV(z)}{\Delta^{2\frac{n}{r}}(\Im z)})$. For more on Besov spaces of tube domains over symmetric cones, we refer the reader to \cite{BBGR,BBGRS}.

We now obtain the following.
\begin{thm}\label{theo:mainSchattenCesaro}
Let $1\le p<2 $, $\nu>\frac{n}{r}-1$. If $g$ is a given holomorphic function in $\mathcal D$, then the Ces\`aro-type operator $T_g$ belongs to
$\mathcal {S}_p(A_\nu^2(\mathcal D))$ if and only if $g\in \mathcal {B}^p(\mathcal D)$.
\end{thm}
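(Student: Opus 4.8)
The plan is to reduce the Schatten class membership of the Ces\`aro-type operator $T_g$ to a Toeplitz operator Schatten criterion, and then invoke Theorem~\ref{thm:main2} together with the definition of the Besov space. First I would recall that, up to the isometric isomorphism $\Box_z^n$ between $A_\nu^2(\mathcal D)$ and $A_{2n+\nu}^2(\mathcal D)$, the operator $T_g$ can be written as a "big Hankel-type" or rather a "multiplication-then-project" operator: for $f\in A_\nu^2(\mathcal D)$, $\Box^n(T_gf)=f\,\Box^n g$, so modulo the isomorphism, $T_g$ is essentially $f\mapsto P_\nu(f\cdot h)$ where $h=\Delta^n\Box^n g$. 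Thus $T_g\in\mathcal S_p(A_\nu^2(\mathcal D))$ if and only if the operator $f\mapsto P_\nu(f\,h)$ (a Toeplitz-type operator with symbol $h$) is in $\mathcal S_p$. More precisely, I would check that $T_g^*T_g$ is comparable (as a positive operator, in the sense of the Schatten order on its eigenvalues, or via Lemma~\ref{lem:schattenviaorthoper}) to the Toeplitz operator $T_\mu$ with $d\mu(z)=|h(z)|^2 d\lambda(z)=|\Delta^n\Box^n g(z)|^2\,d\lambda(z)$; this is the standard identity $\langle T_gf,T_gf\rangle \asymp \int_{\mathcal D}|f(z)|^2|\Delta^n\Box^n g(z)|^2\,d\lambda(z)$, where the right side is $\langle T_\mu f,f\rangle$. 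This is exactly the reduction carried out in \cite{NS} for $p\ge 2$, so I would recall it rather than reprove it, being careful that the identification $\langle T_\mu f,f\rangle=\int|f|^2 d\mu$ holds after passing to the appropriate weight.

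Next I would apply Theorem~\ref{theo:SchattenToep} for $p\ge 1$ — specifically the equivalence (i)$\Leftrightarrow$(iv) — or directly Theorem~\ref{thm:main2} with $m=0$ (checking the range condition $p(\nu+\frac nr)>2\frac nr-1$, which holds for $p\ge 1$ since $\nu>\frac nr-1$ forces $\nu+\frac nr>2\frac nr-1$): the Toeplitz operator $T_\mu$ with $d\mu=|\Delta^n\Box^n g|^2 d\lambda$ belongs to $\mathcal S_p(A_\nu^2(\mathcal D))$ if and only if its Berezin transform $\tilde\mu\in L^p(\mathcal D,d\lambda)$. The final step is to identify $\tilde\mu\in L^p(\mathcal D,d\lambda)$ with the condition $g\in\mathcal B^p(\mathcal D)$, i.e. $\Delta^n\Box^n g\in L^p(\mathcal D,d\lambda)$. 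One direction is essentially the sub-mean-value / Lemma~\ref{lem:meanvalue} estimate: $|\Delta^n\Box^n g(z)|^2\lesssim \hat\mu_\delta(z)\lesssim\tilde\mu(z)$ pointwise (using Lemma~\ref{kor} and that $\Delta^n\Box^n g$ is, modulo the weight, a holomorphic-type quantity so it satisfies a mean value inequality), whence $\|g\|_{\mathcal B^p}^p=\int|\Delta^n\Box^n g|^{2p/2}\dots$ — here I would be careful: $\tilde\mu(z)\asymp\int_{\mathcal D}|k_\nu(z,w)|^2|\Delta^n\Box^n g(w)|^2 d\lambda(w)$, and since $|\Delta^n\Box^n g(w)|^2$ is subharmonic-type, $\tilde\mu(z)\gtrsim |\Delta^n\Box^n g(z)|^2$, giving $g\in\mathcal B^p$ from $\tilde\mu\in L^p(d\lambda)$. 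For the converse, I would discretize via a $\delta$-lattice: using Lemma~\ref{lem:variationoflattice} and Lemma~\ref{lem:integraldiscretizationAverBer}, $\tilde\mu\in L^p(d\lambda)$ is equivalent to $\{\hat\mu_\delta(\zeta_j)\}\in l^p$, and $\hat\mu_\delta(\zeta_j)=\frac{1}{V_\nu(B_j)}\int_{B_j}|\Delta^n\Box^n g|^2 d\lambda\asymp |\Delta^n\Box^n g(\zeta_j)|^2$ by the mean value property again (holomorphy of the relevant quantity on Bergman balls), and $\{|\Delta^n\Box^n g(\zeta_j)|^2\}\in l^p$ is equivalent to $\Delta^n\Box^n g\in L^p(d\lambda)$ by the sampling Lemma~\ref{lem:sampling} applied in the right space. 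Thus $\tilde\mu\in L^p(d\lambda)\Leftrightarrow g\in\mathcal B^p(\mathcal D)$, completing the chain.

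Assembling: $T_g\in\mathcal S_p(A_\nu^2(\mathcal D))$ $\Leftrightarrow$ $T_\mu\in\mathcal S_p(A_\nu^2(\mathcal D))$ with $d\mu=|\Delta^n\Box^n g|^2 d\lambda$ $\Leftrightarrow$ $\tilde\mu\in L^p(\mathcal D,d\lambda)$ $\Leftrightarrow$ $g\in\mathcal B^p(\mathcal D)$. The main obstacle I anticipate is the first reduction — making precise the comparison between $T_g$ (a Ces\`aro-type operator defined via solving $\Box^n F=f\Box^n g$ on the quotient $\mathcal H_n(\mathcal D)$) and the Toeplitz operator $T_\mu$, in particular verifying that this holds as an equivalence of Schatten norms (via a bounded invertible intertwining operator, Lemma~\ref{lem:schattenviaorthoper}) rather than just boundedness, and handling the change of weight $\nu\rightsquigarrow 2n+\nu$ correctly so that the Berezin transform computation lands in $L^p(d\lambda)$ with the invariant measure. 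A secondary technical point is justifying the mean value / sub-mean-value inequalities for $\Delta^n\Box^n g$: since this is not literally holomorphic but $\Box^n g$ is holomorphic and $\Delta^n$ is a smooth positive factor comparable to a constant on each fixed Bergman ball (by Lemma~\ref{lem:Koranyi}-type control of the determinant on balls), one gets the needed two-sided estimates $\hat\mu_\delta(\zeta_j)\asymp|\Delta^n\Box^n g(\zeta_j)|^2$ and $\tilde\mu(z)\gtrsim|\Delta^n\Box^n g(z)|^2$, but this should be spelled out. Once these two points are in place, the rest is a routine chain of the already-established lemmas.
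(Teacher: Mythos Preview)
There is a genuine exponent mismatch in your reduction. From the identity $\Box^n(T_gf)=f\,\Box^n g$ and the fact that $\Box^n:A_\nu^2\to A_{\nu+2n}^2$ is (up to a constant) an isometric isomorphism, one gets
\[
\|T_gf\|_{A_\nu^2}^2\;=\;c\int_{\mathcal D}|f(z)|^2\,|\Box^n g(z)|^2\,dV_{\nu+2n}(z)\;=\;c\,\langle T_\mu f,f\rangle_\nu,
\]
with $d\mu=|\Delta^n\Box^n g|^2\,dV_\nu$ (not $|h|^2\,d\lambda$). Thus $T_g^*T_g=c\,T_\mu$, and therefore
\[
T_g\in\mathcal S_p(A_\nu^2)\;\Longleftrightarrow\;T_\mu\in\mathcal S_{p/2}(A_\nu^2),
\]
\emph{not} $\mathcal S_p$. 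Your chain ``$T_\mu\in\mathcal S_p\Leftrightarrow\tilde\mu\in L^p(d\lambda)\Leftrightarrow g\in\mathcal B^p$'' is then off by a factor of $2$ in the exponent at every step: since $\hat\mu_\delta(\zeta_j)\asymp|\Delta^n\Box^n g(\zeta_j)|^2$, the condition $\tilde\mu\in L^p(d\lambda)$ (or $\{\hat\mu_\delta(\zeta_j)\}\in l^p$) is equivalent to $\Delta^n\Box^n g\in L^{2p}(d\lambda)$, i.e.\ $g\in\mathcal B^{2p}$, not $\mathcal B^p$.

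Even after correcting to $p/2$, your plan to use the Berezin criterion (iv) fails: for $1\le p<2$ one has $p/2<1$, and Theorem~\ref{theo:main1}(iv) requires $p/2>\frac{2n/r-1}{\nu+n/r}$. For $\nu$ close to $\frac nr-1$ this threshold is close to $1$, so the condition is violated for $p$ near $1$. What does work is the discrete criterion (ii) at exponent $p/2$, whose threshold $\frac{n/r-1}{\nu+n/r}$ is strictly below $\tfrac12$ for every $\nu>\frac nr-1$; this is exactly the route the paper takes for the sufficiency direction (via the inequality $|\langle T_ge_j,e_j\rangle_\nu|^2\le\langle T_\mu e_j,e_j\rangle_\nu$ and then the argument of Lemma~\ref{theo:main11} at exponent $p/2$). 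For necessity the paper does \emph{not} pass through $T_\mu$ at all: it applies Proposition~\ref{prop:boxberezin}(i) with $m=n$ directly to $T_g$ and computes $\langle T_g k_z^{\nu,n},k_z^{\nu,n}\rangle_\nu=C\,\Delta^n(\Im z)\Box^n g(z)$ via the reproducing formula, which lands immediately on the $\mathcal B^p$ norm without any range restriction.
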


\begin{proof}
Let us first assume that $T_g\in \mathcal {S}_p(A_\nu^2(\mathcal D))$. Then by Proposition \ref{prop:boxberezin}, we have that
$$\int_{\mathcal{D}}|\langle T_g(k_z^{\nu,n}),k_z^{\nu,n}\rangle_\nu |^pd\lambda(z)\le C||T_g||_{\mathcal{S}_p(A_\nu^2(\mathcal{D}))}^p.$$
%Let us prove that the left hand side of the above inequality is equivalent to the Besov norm of the symbol $g$.
\vskip .1cm
Using reproducing formula, we obtain
\Beas
 ||T_g||_{\mathcal{S}_p(A_\nu^2(\mathcal{D}))}^p &\ge& C\int_{\mathcal{D}}|\langle T_g(k_z^{\nu,n}),k_z^{\nu,n}\rangle_\nu |^pd\lambda(z)\\ &=& C_n\int_{\mathcal{D}}|\langle \Box_{\cdot}^n(T_gk_z^{\nu,n}(\cdot)),k_z^{\nu,n}(\cdot)\rangle_{\nu+n} |^pd\lambda(z)\\ &=& C_n\int_{\mathcal{D}}|\langle \Box^ng(\cdot)k_z^{\nu,n}(\cdot),K_z^{\nu,n}(\cdot)\rangle_{\nu+n} |^p\Delta^{\frac p2(\nu+\frac nr+2n)}(z)d\lambda(z)\\ &=& C_n\int_{\mathcal{D}}|\left(\Box_z^n g(z)\right)k_z^{\nu,n}(z)|^p\Delta^{\frac p2(\nu+\frac nr+2n)}(z)d\lambda(z)\\ &=& C_n\int_{\mathcal{D}}|\Delta^n(\Im z)\Box_z^n g(z)|^pd\lambda(z).
\Eeas
Hence $g\in \mathcal{B}^p(\mathcal{D})$ if $T_g\in \mathcal {S}_p(A_\nu^2(\mathcal D))$.
\vskip .3cm
Now assume that $g\in \mathcal{B}^p(\mathcal{D})$. We consider the following measure
 $$d\mu(z)=|\Box^ng(z)|^2\Delta^{2n+\nu-n/r}(\Im z)dV(z).$$ 
 We first observe the following. Let $\{\zeta_k\}$ be a $\delta$-lattice of points of $\mathcal{D}$. Using Lemma \ref{lem:sampling}, we obtain
\Beas
\int_{\mathcal D}|\Delta^n(\Im z)\Box^ng(z)|^pd\lambda(z) &\asymp& %\sum_j\int_{B_j}|\Da(\im z)^n\Box^ng(z)|^pd\lambda(z)\\ &\simeq& \sum_j %|\Box^ng(\zeta_j)|^p\Da(\im \zeta_j)^{np}\\ &=&
\sum_j \left(|\Box^ng(\zeta_j)|^2\Delta^{2n}(\Im \zeta_j)\right)^{p/2}\\ &\asymp&
\sum_j\left(\int_{B_j}|\Box^ng(z)|^2\Delta^{2n}(\Im z)\frac{dV(z)}{\Delta^{2n/r}(\Im z)}\right)^{p/2}\\ &\asymp& \sum_j\left(\frac{1}{\Delta^{\nu+n/r}(\Im \zeta_j)}\int_{B_j}d\mu(z)\right)^{p/2}\\ &=& \sum_j\left(\frac{\mu(B_j)}{\Delta^{\nu+n/r}(\Im \zeta_j)}\right)^{p/2}.
\Eeas
That is \Be\label{eq:besovtodistr}\int_{\mathcal D}|\Delta^n(\Im z)\Box^ng(z)|^pd\lambda(z)\asymp \sum_j\left(\frac{\mu(B_j)}{\Delta^{\nu+n/r}(\Im \zeta_j)}\right)^{p/2}.\Ee
We next observe the following. Let $\{e_j\}$ be any orthonormal basis of $A_\nu^2(\mathcal{D})$. Then using H\"older's inequality, we obtain
\Beas
\langle T_ge_j,e_j\rangle_\nu^2 &=& \langle \Box^nT_ge_j,e_j\rangle_{\nu+n}^2\\ &=& \langle \Delta^n\Box^nT_ge_j,e_j\rangle_{\nu}^2\\ &\le& \int_{\mathcal{D}}|e_j(z)|^2d\mu(z)\\ &=& \langle T_\mu e_j,e_j\rangle_\nu.
\Eeas
It follows that \Be\label{eq:cesartotoep}\sum_j|\langle T_ge_j,e_j\rangle_\nu|^p\le \sum_j\langle T_\mu e_j,e_j\rangle_\nu^{p/2}.\Ee
Hence, as $p\ge 1$, to prove that $T_g$ belongs to $\mathcal {S}_p(A_\nu^2(\mathcal D))$, it suffices by (\ref{eq:cesartotoep}) to prove that $$\sum_j\langle T_\mu e_j,e_j\rangle_\nu^{p/2}<\infty.$$
This follows as in the proof of Lemma \ref{theo:main11}, using that as $g\in \mathcal{B}^p(\mathcal{D})$, we have by (\ref{eq:besovtodistr}) that
$$\sum_j\left(\frac{\mu(B_j)}{\Delta^{\nu+n/r}(\Im \zeta_j)}\right)^{p/2}<\infty.$$
The proof is complete.
\end{proof}

% ------------------------------------------------------------------------
\end{document}